\documentclass[11pt,reqno]{amsart}
\usepackage[a4paper,margin=1.02in]{geometry}
\usepackage{amsmath,amssymb,amsthm,mathtools,comment}
\usepackage{empheq}
\usepackage{bm}
\usepackage{booktabs,array,tabularx}
\usepackage{float}
\usepackage{aliascnt}
\usepackage{enumitem}
\usepackage{microtype}
\usepackage[hidelinks]{hyperref}
\allowdisplaybreaks
\setlist[enumerate]{leftmargin=2.4em,itemsep=0.25em,topsep=0.35em}
\setlist[itemize]{leftmargin=2.1em,itemsep=0.25em,topsep=0.35em}
\numberwithin{equation}{section}

\newtheorem{theorem}{Theorem}[section]

\newaliascnt{proposition}{theorem}
\newtheorem{proposition}[proposition]{Proposition}
\aliascntresetthe{proposition}

\newaliascnt{corollary}{theorem}
\newtheorem{corollary}[corollary]{Corollary}
\aliascntresetthe{corollary}

\newaliascnt{lemma}{theorem}
\newtheorem{lemma}[lemma]{Lemma}
\aliascntresetthe{lemma}

\newaliascnt{assumption}{theorem}

\aliascntresetthe{assumption}

\newaliascnt{hypothesis}{theorem}

\aliascntresetthe{hypothesis}

\theoremstyle{definition}
\newaliascnt{definition}{theorem}

\aliascntresetthe{definition}

\newaliascnt{remark}{theorem}
\newtheorem{remark}[remark]{Remark}
\aliascntresetthe{remark}

\newaliascnt{example}{theorem}

\aliascntresetthe{example}

\usepackage[nameinlink,noabbrev]{cleveref}
\crefname{theorem}{Theorem}{Theorems}
\Crefname{theorem}{Theorem}{Theorems}
\crefname{proposition}{Proposition}{Propositions}
\Crefname{proposition}{Proposition}{Propositions}
\crefname{corollary}{Corollary}{Corollaries}
\Crefname{corollary}{Corollary}{Corollaries}
\crefname{lemma}{Lemma}{Lemmas}
\Crefname{lemma}{Lemma}{Lemmas}
\crefname{assumption}{Assumption}{Assumptions}
\Crefname{assumption}{Assumption}{Assumptions}
\crefname{hypothesis}{Standing hypothesis}{Standing hypotheses}
\Crefname{hypothesis}{Standing hypothesis}{Standing hypotheses}
\crefname{definition}{Definition}{Definitions}
\Crefname{definition}{Definition}{Definitions}
\crefname{remark}{Remark}{Remarks}
\Crefname{remark}{Remark}{Remarks}
\crefname{example}{Example}{Examples}
\Crefname{example}{Example}{Examples}

\newcommand{\R}{\mathbb R}
\newcommand{\Rplus}{\mathbb R_+}
\newcommand{\E}{\mathbb E}
\newcommand{\Pp}{\mathbb P}
\newcommand{\1}{\mathbf 1}
\newcommand{\cL}{\mathcal L}
\newcommand{\dd}{\,\mathrm d}
\newcommand{\e}{\mathrm e}
\newcommand{\tauX}{\tau_0^X}
\newcommand{\tauY}{\tau_0^Y}
\newcommand{\Xlow}{\overline X}
\newcommand{\doi}[1]{\href{https://doi.org/#1}{doi:#1}}
\newcommand{\rev}[1]{#1}
\newcommand{\vnew}[1]{#1}

\newcommand{\vthirtyfour}[1]{#1}
\def\beqlb{\begin{eqnarray}}\def\eeqlb{\end{eqnarray}}
\def\beqnn{\begin{eqnarray*}}\def\eeqnn{\end{eqnarray*}}
\def\ar{\!\!\!&}

\title[Nonlinear predator-prey branching model]
{Extinction and extinguishment properties for a nonlinear predator-prey branching model}

\author{Lina Ji}
\address{MSU-BIT-SMBU Joint Research Center of Applied Mathematics, Shenzhen MSU-BIT University, Shenzhen, China.}
\email{jiln@smbu.edu.cn}

\author{Jie Xiong}
\address{Department of Mathematics and Shenzhen International Center for Mathematics,
	Southern University of Science and Technology, Shenzhen 518055, China.}
\email{xiongj@sustech.edu.cn}

\author{Wen Xu }
\address{School of Mathematical Sciences, Peking University, Beijing 100871, China.}
\email{xuwen@math.pku.edu.cn}

\author{Xu Yang}
\address{School of Mathematics and Information Science, North Minzu University, Yinchuan, China.}
\email{xuyang@mail.bnu.edu.cn}

\thanks{This work was supported by
	the National Key R\&D Program of China (2022YFA1006102),
	the National Natural Science Foundation of China (12471418, 12595294, 12231002, 12301167, 12471135), the New Cornerstone Science Foundation (NCI202501), Guangdong Basic and Applied Basic Research Foundation  (2022A1515110986), and Shenzhen National Science Foundation  (20231128093607001).}

\date{}

\hypersetup{
  pdftitle={\vthirtyfour{Extinction and Extinguishment Properties for a Nonlinear Predator-Prey Branching Model}},
  pdfauthor={Lina Ji, Jie Xiong, Wen Xu and Xu Yang},
  pdfsubject={Extinction and extinguishment in mixed-sign continuous-state population dynamics},
  pdfkeywords={continuous-state branching process, mixed-sign interaction, stable \vthirtyfour{L\'evy} noise, nonexplosion, extinction, extinguishment}
}

\begin{document}

\begin{abstract}
	We study extinction and extinguishment in a two-type continuous-state nonlinear branching model driven by Brownian branching noise and spectrally positive stable jumps.  The populations are subject to nonlinear self-regulation and a mixed-sign predator--prey interaction: the second promotes the first, whereas the first suppresses the second. Two complementary structures are developed.  An exact power--logarithmic cancellation functional removes the interaction drifts and yields stochastic Lyapunov estimates, nonexplosion, and boundary criteria.  In the multiplicative regimes, integrating-factor identities and geometric L\'evy factorizations express extinction through weighted exposure clocks and reduce the long-time analysis to effective decay rates.  These methods yield almost-sure extinction criteria and identify a regime in which finite-time extinction and nonextinction coexist. On nonextinction, both populations remain positive at all finite times and converge jointly to zero, exhibiting joint extinguishment rather than positive persistence.
\end{abstract}

\subjclass[2020]{Primary 60J60, 60J80; Secondary 60J75, 60G51, 60J25}
\keywords{continuous-state branching process; mixed-sign interaction; stable L\'evy noise; nonexplosion; extinction; extinguishment}

\maketitle

\section{Introduction and main results}
\label{sec:introduction}

 Continuous-state branching processes (CSBPs), introduced by
Ji\v{r}ina~\cite{Jirina1958}, are the continuous-mass analogues of
Galton--Watson processes. They arise as scaling limits of suitably rescaled Galton--Watson processes,
as shown by Lamperti~\cite{Lamperti1967CSBP,Lamperti1967Limit} and
Grimvall~\cite{Grimvall1974}. In the conservative finite-mean setting, a CSBP $Z := (Z_t)_{t \ge 0}$ can be represented as
the solution of the Dawson--Li equation~\cite{DawsonLi2012},
\beqnn
	Z_t= z-\beta\int_0^t Z_s\,\dd s
	+\int_0^t\sqrt{2cZ_s}\,\dd B_s
	+\int_0^t\int_0^\infty\int_0^{Z_{s-}}
	r\,\widetilde N(\dd s,\dd r,\dd u),
\eeqnn
where \(c\ge0\), \(\beta\in\mathbb R\), \((B_t)_{t \ge 0}\) is a Brownian motion, and
\(\widetilde N\) is an independent compensated Poisson random measure. For a systematic account of CSBPs, the reader is referred to Li~\cite{Li2020CBI,Li2022} and Pardoux~\cite{Pardoux2016}.   A central question in the classical theory is whether the absorbing state zero
is reached in finite time.  Grey~\cite{Grey1974} established the corresponding
integral criterion.  When this condition holds, a critical or subcritical CSBP
becomes extinct in finite time almost surely, whereas a supercritical CSBP
survives with positive probability.

Nonlinear branching models generalize the classical branching theory by allowing the birth and death rates to depend on the current population mass.   The basic example is the logistic CSBP introduced by
Lambert~\cite{Lambert2005}, obtained by adding quadratic competition.  Under suitable conditions, Lambert proved that the process converges to zero
	almost surely and reaches zero in finite time if and only if Grey's condition
	holds. Cattiaux et
al.~\cite{CattiauxEtAl2009} studied quasi-stationarity for the logistic Feller
diffusion, and Foucart~\cite{Foucart2019} classified the boundary at infinity
for logistic CSBPs. Li et al.~\cite{LiYangZhou2019} developed a
general theory of nonlinear branching SDEs, establishing criteria for
extinction, explosion, and coming down from infinity.  Ma et
al.~\cite{MaYangZhou2021} subsequently treated the critical power-rate cases
left open by this analysis.

Interacting branching systems describe populations whose branching dynamics
are coupled through their current masses.
Ren et al.~\cite{RenXiongYangZhou2022} analyzed a one-way model in which an
autonomous population suppresses a second one, obtaining nearly sharp
extinction--extinguishment criteria.  For two-way systems, Cattiaux and
M\'el\'eard~\cite{CattiauxMeleard2010} studied competitive and weakly
cooperative logistic Feller diffusions, while Xiong et
al.~\cite{XiongYangZhouCompeting,XiongYangZhouEnhancing} treated mutual
competition and mutual enhancement in models with Brownian and stable noises.
The present paper addresses a two-way branching system with
mixed-sign interaction, a regime not covered by the preceding models.

 For \(i=1,2\), let \(B_i=(B_i(t))_{t\ge0}\) be a Brownian motion
and let \( N_i(\dd s,\dd z,\dd u)\) be a Poisson random measure on
$\mathbb{R}_+\times(0,\infty)^2$ with intensity
$\dd s\,\mu_i(\dd z)\,\dd u$, where 
\beqnn 
\mu_i(\dd z)=\frac{\alpha_i(\alpha_i-1)}
{\Gamma(\alpha_i)\Gamma(2-\alpha_i)}z^{-1-\alpha_i}
\1_{\{z>0\}}\dd z, \qquad \alpha_i\in(1,2).
\eeqnn 
Let $\widetilde N_i$ denote the corresponding compensated measure.  Assume that $B_1,B_2,N_1, N_2$ are mutually independent, and let
	\((\mathcal F_t)_{t\ge0}\) be the usual augmentation of the filtration
	generated by these driving noises. For each
\((x,y)\in(0,\infty)^2\), we consider the following system:
\begin{empheq}[left=\empheqlbrace]{equation}
\begin{aligned}
 X_t={}&x+
 \int_0^t\bigl(a_1 X_s^{\theta_1}Y_s^{\kappa_1}-b_{10}X_s^{r_{10}}\bigr)\dd s
 +\int_0^t\sqrt{2b_{11}X_s^{r_{11}}}\dd B_1(s)
 \\
 &+\int_0^t\int_0^\infty\int_0^{b_{12}X_{s-}^{r_{12}}}
 z\,\widetilde N_1(\dd s,\dd z,\dd u),\\
 Y_t={}&y+
 \int_0^t\bigl(-a_2 Y_s^{\theta_2}X_s^{\kappa_2}-b_{20}Y_s^{r_{20}}\bigr)\dd s
 +\int_0^t\sqrt{2b_{21}Y_s^{r_{21}}}\dd B_2(s)
 \\
 &+\int_0^t\int_0^\infty\int_0^{b_{22}Y_{s-}^{r_{22}}}
 z\,\widetilde N_2(\dd s,\dd z,\dd u).
\end{aligned}
 \label{eq:noimm-system}
\end{empheq}
Here \(a_i,\kappa_i>0\), \(\theta_i\ge0\), and
\(r_{ij},b_{ij}\ge0\) for \(i=1,2\) and \(j=0,1,2\).
By localization, the system has a pathwise unique \vthirtyfour{strong}
c\`adl\`ag solution up to its first boundary or explosion time; see Lemma
\ref{prop:local-wellposedness}.  We denote its law by \(\Pp_{x,y}\) and the
corresponding expectation by \(\E_{x,y}\).

The system may be interpreted as a rabbit--grass model, with
\(X\) representing rabbits and \(Y\) grass: \(Y\) promotes \(X\), whereas
\(X\) suppresses \(Y\).  The extinction of \(Y\) depends not only on its
autonomous dynamics but also on its cumulative exposure to \(X\).  Moreover,
the mixed-sign interaction precludes a simultaneous one-sided comparison of
both coordinates with their autonomous counterparts.

More general nonlinear predator--prey branching systems can be
obtained by replacing the two power-type interaction rates by suitable
nonnegative continuous functions \(f_i(x,y)\), the power-type branching
coefficients by general nonnegative functions \(\gamma_{ij}\), and the stable
L\'evy measures by more general spectrally positive L\'evy measures satisfying
the usual integrability conditions.  We restrict attention to the
power-type coefficients and stable L\'evy densities in
\eqref{eq:noimm-system} in order to make the underlying mechanisms transparent
and the resulting criteria explicit.

For \(R>0\), set
\[
 \tau_R^+:=\inf\{t\ge0:X_t\vee Y_t\ge R\},
 \qquad
 \tau_R^-:=\inf\{t\ge0:X_t\wedge Y_t\le R\},
\]
and define the explosion and boundary times, respectively, by
\[
 \tau_\infty:=\lim_{R\uparrow\infty}\tau_R^+,
 \qquad
 \tau_0:=\lim_{R\downarrow0}\tau_R^-.
\]
Since the coordinates have only nonnegative jumps, on
\(\{\tau_0<\tau_\infty\}\) the limit
\((X_{\tau_0},Y_{\tau_0}):=\lim_{t\uparrow\tau_0}(X_t,Y_t)\) exists in
\([0,\infty)^2\) and has at least one zero coordinate.

We call the model $(X, Y)$ in \eqref{eq:noimm-system} \emph{source-conservative} if
\begin{equation}
 \Pp_{x,y}\bigl(\tau_\infty<\infty,\ \tau_\infty\le\tau_0\bigr)=0
 \rev{,\qquad x,y>0.}
 \label{eq:source-conservative}
\end{equation}
When \eqref{eq:source-conservative} holds, we freeze the system at its first
boundary time:
\beqnn 
 (X_t,Y_t)=(X_{\tau_0},Y_{\tau_0}),
 \qquad t\ge\tau_0;
\eeqnn
equivalently, all integrals in \eqref{eq:noimm-system} are stopped at
\(\tau_0\).  The resulting source-stopped process is strong
Markov.

For this source-stopped process, define
\[
 \tauX:=\inf\{t\ge0:X_t=0\},
 \qquad
 \tauY:=\inf\{t\ge0:Y_t=0\}.
\]
Then \(\tau_0=\tauX\wedge\tauY\); if one coordinate reaches zero first, the
other is frozen and its own hitting time is \(\infty\).  We call a coordinate
\emph{extinct} when its hitting time is finite, and \emph{extinguished} when
its hitting time is infinite but it converges to zero.  Unless stated
otherwise, all global results concern the source-stopped process, and
``conservative'' means source-conservative in the sense of
\eqref{eq:source-conservative}.

To state the boundary and long-time results, we introduce the
following effective parameters.  With the convention
\(\min\varnothing=+\infty\), define 
\begin{equation}
 r_i:=\min\Bigl(
 \{r_{i0}-1:b_{i0}>0\}
 \cup\{r_{i1}-2:b_{i1}>0\}
 \cup\{r_{i2}-\alpha_i:b_{i2}>0\}
 \Bigr),
 \qquad i=1,2. 
 \label{eq:effective-minima}
\end{equation}
 Also set
\beqnn 
 b_i:=b_{i0}\1_{\{r_{i0}-1=r_i\}}
     +b_{i1}\1_{\{r_{i1}-2=r_i\}}
     +b_{i2}\1_{\{r_{i2}-\alpha_i=r_i\}},
 \qquad i=1,2.
\eeqnn
Thus \(b_i\) is the sum of the coefficients corresponding to
the minimum in \eqref{eq:effective-minima}. 

 We investigate whether the enhancing interaction can cause
explosion before the first boundary time and how the mixed-sign feedback
affects extinction.  A power--logarithmic functional \(H\) cancels the two
interaction drifts exactly.  This yields source-conservativeness in both the
concave and dissipative regimes, together with quantitative energy and
occupation estimates, respectively
(Theorems~\ref{thm:concave-cancellation}
and~\ref{thm:dissipativity}).  Theorem~\ref{thm:boundary-clock} gives boundary
nonattainment criteria for both coordinates and, when
\(0\le\theta_2<1\), shows that divergent cumulative exposure to \(X\) forces
\(Y\) to reach zero in finite time.  Theorem~\ref{thm:kappa-le-r} complements
this pathwise criterion by identifying an explicit parameter regime for
almost-sure extinction of \(Y\).  Since zero is inaccessible for each
autonomous coordinate in this regime, the extinction of \(Y\) is induced by
the suppressive interaction. 

 In the multiplicative regime for \(Y\), an integrating-factor
identity gives an exact weighted-clock characterization of \(\tauY\)
(Theorem~\ref{thm:FK-rate}).  In the fully multiplicative model, the
corresponding factorization of \(X\) also yields source-conservativeness and
leads to an effective-rate transition.  One rate ordering gives almost-sure
extinction of \(Y\), including at the critical equality, whereas under the
strict reverse ordering
\(\Pp_{x,y}(\tauY<\infty)\to0\) as \(x\downarrow0\) for every fixed \(y>0\).
When the \(X\)-noise is nondegenerate, extinction nevertheless has positive
probability for every \(x,y>0\); hence extinction and nonextinction coexist
for all sufficiently small \(x\).  On \(\{\tauY=\infty\}\), both coordinates
remain strictly positive at every finite time but converge to zero as
\(t\to\infty\).  Their logarithmic decay is quantified in
Theorem~\ref{thm:critical-transition}. 

 The proofs rely on three structural reductions.  First, the function \(H\) is both a first integral of the deterministic interaction
		system and a stochastic Lyapunov function.  Stable homogeneity makes the
		action of the operator \(\cL\), defined in
		\eqref{eq:noimm-generator}, explicit.  Localized It\^o estimates then yield
		the energy and occupation bounds in
		Theorems~\ref{thm:concave-cancellation} and~\ref{thm:dissipativity},
		respectively.  Conditional increment estimates and a conditional
		Borel--Cantelli argument further convert finite occupation into joint
		convergence to zero on \(\{\tau_0=\infty\}\).
 The second reduction uses inverse-power test functions to
		establish the boundary nonattainment criteria in
		Theorem~\ref{thm:boundary-clock}\textup{(i)--(ii)}, while the transform
		\(Y^{1-\theta_2}\) yields the exposure estimate in part~\textup{(iii)}.
		A scalar lower comparison process for \(X\), together with comparison with an
		associated one-way system, gives the explicit extinction criterion in
		Theorem~\ref{thm:kappa-le-r}.
 Finally, in the multiplicative regimes, a relative-jump
		representation yields independent geometric L\'evy factors.  An
		integrating-factor calculation gives the weighted-clock identity in
		Theorem~\ref{thm:FK-rate}.  In the fully multiplicative model, the
		corresponding factorization of \(X\) reduces the transition and decay results 
		in Theorem~\ref{thm:critical-transition} to exponential functionals of L\'evy
		processes.   The unbounded-support estimate in Lemma~\ref{lem:clock-unbounded-support} yields positive extinction probability
		in the reverse regime when the \(X\)-noise is nondegenerate.

		\paragraph{\bf{Notation.}}
 \(C^2((0,\infty)^2)\) denotes the space of twice continuously
 differentiable real-valued functions on \((0,\infty)^2\).  For real-valued \(f\) and eventually positive \(g\),
 \(f=o(g)\) means that \(f/g\to0\).  For nonnegative \(f\) and \(g\), with
 \(g>0\) eventually, \(f\sim g\) means that \(f/g\to1\), and \(f\asymp g\)
 means that \(cg\le f\le Cg\) eventually for some \(0<c\le C<\infty\).
 In particular, 
 \(o(1)\) denotes a quantity tending to zero. 

\subsection{Cancellation and Lyapunov results}\label{subsection1.1}

For \(f\in C^2((0,\infty)^2)\), we define \(\cL f\) by
\begin{align}
 \cL f(x,y)={}&
 \bigl(a_1x^{\theta_1}y^{\kappa_1}-b_{10}x^{r_{10}}\bigr)f_x'(x,y)
 +\bigl(-a_2y^{\theta_2}x^{\kappa_2}-b_{20}y^{r_{20}}\bigr)f_y'(x,y)
 \notag\\
 &+b_{11}x^{r_{11}}f_{xx}''(x,y)+b_{21}y^{r_{21}}f_{yy}''(x,y)
 \notag\\
 &+b_{12}x^{r_{12}}\int_0^\infty
 \bigl[f(x+z,y)-f(x,y)-zf_x'(x,y)\bigr]\mu_1(\dd z)
 \notag\\
 &+b_{22}y^{r_{22}}\int_0^\infty
 \bigl[f(x,y+z)-f(x,y)-zf_y'(x,y)\bigr]\mu_2(\dd z)
 \label{eq:noimm-generator}
\end{align}
whenever the integrals on the right-hand side are absolutely convergent and locally bounded. Subscripts denote the corresponding first- and second-order partial
derivatives. When \(\cL f\) is bounded on
\([\epsilon,R]^2\), by It\^o's formula,
\beqnn
f(X_{t\wedge\tau_R^+\wedge\tau_\epsilon^-},
Y_{t\wedge\tau_R^+\wedge\tau_\epsilon^-})-f(x,y)-\int_0^{t\wedge\tau_R^+\wedge\tau_\epsilon^-}
\cL f(X_s,Y_s)\dd s
\eeqnn
is a local martingale. If, in addition, $f$ is nonegative, by Fatou's lemma and the localizing sequence of stopping times for a local martingale, we obtain 
	\beqlb\label{mart}
	 \mathbb{E}_{x,y}\left[f(X_{t\wedge\tau_R^+\wedge\tau_\epsilon^-},
	Y_{t\wedge\tau_R^+\wedge\tau_\epsilon^-})\right] \le f(x, y) + \mathbb{E}_{x,y}\left[\int_0^{t\wedge\tau_R^+\wedge\tau_\epsilon^-}
	\cL f(X_s,Y_s)\dd s \right].
	\eeqlb

Write \(q_1=1+\kappa_2-\theta_1\) and
\(q_2=1+\kappa_1-\theta_2\).  For \(q\in\mathbb R\), let
\(\Phi_q:(0,\infty)\to\mathbb R\) be defined by 
\[
\Phi_q(u):=
\begin{cases}
u^q/q,&q\ne0,\\
\log u,&q=0,
\end{cases}.
\]
When \(q>0\), we use its continuous extension
\(\Phi_q(0):=0\).  Set 
\begin{equation}
H(x,y):=\Phi_{q_1}(x)+\frac{a_1}{a_2}\Phi_{q_2}(y).
\label{eq:H}
\end{equation} 
Then the mixed interaction cancels exactly:
\begin{equation}
 a_1x^{\theta_1}y^{\kappa_1}\partial_xH
 -a_2y^{\theta_2}x^{\kappa_2}\partial_yH=0, \qquad x, y > 0.
 \label{eq:interaction-cancel}
\end{equation}
 The above~\eqref{eq:interaction-cancel} shows that \(H\) is
conserved by the deterministic interaction dynamics and also motivates its
use as a Lyapunov functional below.  The corresponding deterministic phase
portrait is given in Proposition~\ref{prop:ODE-classification}. 

For \(\alpha\in(1,2)\) and \(q<\alpha\), set
\begin{equation}
 \rev{j_\alpha(q)
 :=\frac{(q-1)\Gamma(\alpha-q)}
        {\Gamma(\alpha)\Gamma(2-q)}.
 }
 \label{eq:j-alpha}
\end{equation}
\rev{For \(i=1,2\) and \(0<q<\alpha_i\), define}
\begin{equation}
 D_i^{(q)}(z):=b_{i0}z^{q+r_{i0}-1}
 +(1-q)b_{i1}z^{q+r_{i1}-2}
 -b_{i2}j_{\alpha_i}(q)z^{q+r_{i2}-\alpha_i},
 \qquad z>0.
 \label{eq:Dip}
\end{equation}
 Whenever \(0<q_i<\alpha_i\) for \(i=1,2\),
\eqref{eq:noimm-generator}, \eqref{eq:interaction-cancel}, and
Lemma~\ref{lem:stable-power} yield 
\begin{equation}
 \rev{\cL H(x,y)
 =-D_1^{(q_1)}(x)-\frac{a_1}{a_2}D_2^{(q_2)}(y).}
 \label{eq:LH-D}
\end{equation}

 When \(0<q_i\le1\), \(i=1,2\), the functions
\(\Phi_{q_i}\) are concave, and hence \(D_i^{(q_i)}\ge0\).  Thus
\eqref{eq:LH-D} gives \(\cL H\le0\), leading to the following Lyapunov
estimate. 

\begin{theorem} 
\label{thm:concave-cancellation}
 Assume that \(0<q_1,q_2\le1\).  Then the model
\eqref{eq:noimm-system} is source-conservative.  Moreover, for the associated
source-stopped process and every \(t\ge0\), 
\begin{align}
  \E_{x,y}H(X_t,Y_t)
 +\E_{x,y}\int_0^{t\wedge\tau_0}
 \left[D_1^{(q_1)}(X_s)+\frac{a_1}{a_2}D_2^{(q_2)}(Y_s)\right]\dd s
  \le H(x,y).
 \label{eq:concave-energy}
\end{align}
Consequently,
\(\bigl(H(X_t,Y_t)\bigr)_{t\ge0}\) is a nonnegative supermartingale.  If, in
addition, \(b_{10},b_{20}>0\), then, almost surely,
\beqnn
\lim_{t\to\infty}(X_t,Y_t)
=
\begin{cases}
	(X_{\tau_0},Y_{\tau_0}), & \tau_0<\infty,\\
	(0,0), & \tau_0=\infty.
\end{cases}
\eeqnn 
\end{theorem}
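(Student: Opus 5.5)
The plan is to apply \eqref{mart} to $f=H$ and then pass to the limits $R\uparrow\infty$, $\epsilon\downarrow0$. Since $0<q_i\le1$, each $\Phi_{q_i}$ is nonnegative and increasing on $(0,\infty)$ with $\Phi_{q_i}(u)\to\infty$ as $u\to\infty$, so $H\ge0$. By \eqref{eq:LH-D}, $\cL H=-D_1^{(q_1)}-\frac{a_1}{a_2}D_2^{(q_2)}$. Each term of $D_i^{(q_i)}$ is nonnegative: $b_{i0}\ge0$, $1-q_i\ge0$, and $j_{\alpha_i}(q_i)\le0$ because $q_i-1\le0$ while the Gamma factors are positive. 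Hence $\cL H\le0$, and it is bounded on $[\epsilon,R]^2$. Writing $\sigma=t\wedge\tau_R^+\wedge\tau_\epsilon^-$, the local martingale identity gives
\[
\E_{x,y}H(X_{\sigma},Y_{\sigma})+\E_{x,y}\int_0^{\sigma}\Bigl[D_1^{(q_1)}(X_s)+\tfrac{a_1}{a_2}D_2^{(q_2)}(Y_s)\Bigr]\dd s\le H(x,y).
\]
This follows from the argument behind \eqref{mart}: localize the local martingale, move the nonnegative integral to the left, and use Fatou for $H$ and monotone convergence for the integral.

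\textbf{Source-conservativeness.} On $\{\tau_R^+\le t\wedge\tau_\epsilon^-\}$, at least one coordinate is $\ge R$ at time $\tau_R^+$, because jumps are upward and overshoot only helps. Hence $H\ge m_R:=\min\bigl(\Phi_{q_1}(R),\frac{a_1}{a_2}\Phi_{q_2}(R)\bigr)\to\infty$ there, so
\[
\Pp_{x,y}(\tau_R^+\le t\wedge\tau_\epsilon^-)\le H(x,y)/m_R .
\]
Letting $\epsilon\downarrow0$ gives $\Pp(\tau_R^+\le t\wedge\tau_0^{-})$ small; here one must handle $\tau_\epsilon^-\uparrow\tau_0$ and the event $\tau_R^+<\tau_0$. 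Letting $R\uparrow\infty$ then gives $\Pp(\tau_\infty\le t,\ \tau_\infty\le\tau_0)=0$ for each $t$, which is \eqref{eq:source-conservative}.

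\textbf{Energy inequality and supermartingale property.} With conservativeness in hand, $\tau_R^+\wedge\tau_\epsilon^-\uparrow\tau_0$ a.s. as $R\uparrow\infty$, $\epsilon\downarrow0$, on the event where $\tau_0$ is approached before explosion, which is now full. The stopped process is continuous at $\tau_0$ from the left by the definition of the frozen values. Fatou for $H\ge0$ (using continuity of $H$, extended by $\Phi_q(0)=0$ when $q>0$) and monotone convergence for the integral give \eqref{eq:concave-energy}. The supermartingale property follows by applying the same argument conditionally from time $s$, using the strong Markov property of the source-stopped process. One subtlety is the case $q_i$ at the boundary: here $0<q_i$ is assumed, so $H$ extends continuously to the boundary, which is what makes Fatou applicable at $\tau_0$.

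\textbf{Long-time limit.} On $\{\tau_0<\infty\}$ the limit is trivial by freezing. On $\{\tau_0=\infty\}$, $H(X_t,Y_t)$ converges a.s. to a finite limit $H_\infty$ by the supermartingale convergence theorem. Letting $t\to\infty$ in \eqref{eq:concave-energy} gives
\[
\int_0^\infty\Bigl[D_1^{(q_1)}(X_s)+\tfrac{a_1}{a_2}D_2^{(q_2)}(Y_s)\Bigr]\dd s<\infty \quad\text{a.s.}
\]
When $b_{i0}>0$, $D_i^{(q_i)}(z)\ge b_{i0}z^{q_i+r_{i0}-1}$, which is bounded below on $[\delta,\infty)$. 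Hence the occupation time of $\{X\vee Y\ge\delta\}$ is finite for every $\delta>0$, and so $\liminf_t (X_t+Y_t)=0$ on $\{\tau_0=\infty\}$.

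The main obstacle is upgrading this $\liminf$ to a limit, that is, excluding repeated excursions of $X$ or $Y$ from near $0$ back up to level $\delta$. I would first try the following route, which avoids the increment estimates. Since $H$ converges and $\liminf H(X_t,Y_t)$ along times with both coordinates small is $0$ (as $q_i>0$), we get $H_\infty=0$. Because $H\ge0$ is a sum of two nonnegative increasing functions, $H(X_t,Y_t)\to0$ forces $X_t\to0$ and $Y_t\to0$.

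The delicate point is that the $\liminf$ above concerns $X+Y$ only at a common sequence of times. Finite occupation of $\{X\vee Y\ge\delta\}$ gives times where both coordinates are $<\delta$ simultaneously, which is exactly what is needed for $H$ to be small there. If this breaks down, I would fall back on the conditional increment estimates and a conditional Borel--Cantelli argument: bound the probability that, started in $[0,\delta]^2$, the process reaches level $2\delta$ within unit time, and sum over the unit time intervals in which the process visits $\{X\vee Y\ge\delta\}$.
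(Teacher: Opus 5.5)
Your treatment of source-conservativeness, the energy inequality \eqref{eq:concave-energy} and the supermartingale property is essentially the paper's: the localized It\^o bound with $\cL H=-D_1^{(q_1)}-\frac{a_1}{a_2}D_2^{(q_2)}\le0$, the lower bound on $H$ over $\{u\vee v\ge R\}$, then Fatou and monotone convergence.

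For the convergence to $(0,0)$ on $\{\tau_0=\infty\}$ your primary route is genuinely different from the paper's, shorter, and correct, so the fallback is not needed. The paper argues coordinate by coordinate. It first proves a conditional short-time $L^p$ increment bound (Lemma~\ref{lem:conditional-increment}, via BDG with a small/large jump split). It then uses spaced stopping times and a conditional Borel--Cantelli argument to show that $\limsup_t X_t>2\delta$ would produce infinitely many stays of length $h$ above $\delta$. That contradicts the finite occupation integral \eqref{eq:selfreg-occupation}.

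You combine the same two ingredients differently: a.s.\ convergence $H(X_t,Y_t)\to H_\infty<\infty$ and finite occupation. Finite occupation of $\{X\vee Y\ge\delta\}$ for every $\delta$ gives times $t_n\to\infty$ with $X_{t_n}\vee Y_{t_n}\to0$. Since $q_1,q_2>0$, $H(X_{t_n},Y_{t_n})\to0$, so $H_\infty=0$. Then $X_t^{q_1}\le q_1H(X_t,Y_t)$ and $Y_t^{q_2}\le\frac{a_2q_2}{a_1}H(X_t,Y_t)$ give $(X_t,Y_t)\to(0,0)$.

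This avoids all increment estimates. It exploits the special fact that the convergent supermartingale $H$ vanishes only at the origin and dominates both coordinates. It also needs simultaneous smallness of both coordinates, hence both $b_{10},b_{20}>0$. The paper's excursion argument is coordinatewise: it gives $X_t\to0$ from the $X$-occupation bound (i.e.\ $b_{10}>0$) plus path boundedness alone. It is therefore more portable when only one coordinate is self-regulated or when no such exactly-vanishing convergent functional is available.

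One step needs a small repair. You claim $b_{i0}z^{q_i+r_{i0}-1}$ is bounded below on $[\delta,\infty)$, but this fails when $q_i+r_{i0}-1<0$, e.g.\ $r_{i0}=0$ and $q_i<1$. Use instead that $H(X_t,Y_t)$ converges and has c\`adl\`ag paths. Hence $M:=\sup_{t\ge0}(X_t\vee Y_t)<\infty$ a.s., and you can bound $b_{i0}z^{q_i+r_{i0}-1}$ below on $[\delta,M]$ by $\min_{\delta\le u\le M}b_{i0}u^{q_i+r_{i0}-1}>0$. The Lebesgue measure of $\{s:X_s\vee Y_s\ge\delta\}$ is then finite, as you need.
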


 If \(q_i>1\) for some \(i\), the corresponding diffusion and
jump contributions to \(\cL H\) may be positive.  The next result imposes a
dominance condition under which the self-regulating drifts control these
contributions at infinity, yielding a coercive bound for \(\cL H\). 

\begin{theorem}
\label{thm:dissipativity}
Assume \(0<q_i<\alpha_i\) and \(b_{i0}>0\), \(i=1,2\), and
\begin{equation}
 r_{i0}-1>
 \max\Bigl(
 \{0\}\cup\{r_{i1}-2:b_{i1}>0\}
 \cup\{r_{i2}-\alpha_i:b_{i2}>0\}
 \Bigr),
 \qquad i=1,2.
 \label{eq:dissipative-b}
\end{equation}
Whenever \(q_i>1\), assume additionally that
\begin{equation}
 q_i+r_{i1}-2\ge0\quad\text{if }b_{i1}>0,
 \qquad
 q_i+r_{i2}-\alpha_i\ge0\quad\text{if }b_{i2}>0.
 \label{eq:dissipative-c}
\end{equation}
\rev{Then there exist constants $C,c>0$ such that}
\beqnn
 \cL H(x,y)
 \le C-c\left(x^{q_1+r_{10}-1}+y^{q_2+r_{20}-1}\right),
 \qquad x,y>0.
\eeqnn 
 The model \eqref{eq:noimm-system} is
source-conservative and, for every \(T>0\), 
\beqnn
 \frac1T\E_{x,y}\int_0^{T\wedge\tau_0}
 \left(X_s^{q_1+r_{10}-1}+Y_s^{q_2+r_{20}-1}\right)\dd s
 \le \frac Cc+\frac{H(x,y)}{cT}.
 \eeqnn 
\end{theorem}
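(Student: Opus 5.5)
The plan has three steps: first obtain the pointwise bound on $\cL H$ from \eqref{eq:LH-D}, then use it to rule out explosion before the boundary time, and finally derive the occupation estimate from \eqref{mart}. By \eqref{eq:LH-D}, which is available because $0<q_i<\alpha_i$, it suffices to show, for each $i$ separately, that
\[
D_i^{(q_i)}(z)\ge c_i z^{q_i+r_{i0}-1}-C_i,\qquad z>0.
\]
Summing the two bounds with the weight $a_1/a_2$ then gives the claim. Write $p_0=q_i+r_{i0}-1$, $p_1=q_i+r_{i1}-2$ and $p_2=q_i+r_{i2}-\alpha_i$. The leading term $b_{i0}z^{p_0}$ is strictly positive.

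For the other two terms, I would compare exponents:
\begin{itemize}
\item At $z\to\infty$, \eqref{eq:dissipative-b} gives $p_0>p_1$ and $p_0>p_2$ whenever the corresponding coefficients are positive. Hence $|(1-q_i)b_{i1}z^{p_1}|+|b_{i2}j_{\alpha_i}(q_i)z^{p_2}|\le \tfrac12 b_{i0}z^{p_0}$ for $z\ge z_0$.
\item On $(0,z_0]$, the terms need not be bounded below automatically, so I split on the sign of $q_i-1$. If $q_i\le 1$, then $(1-q_i)\ge0$ and $j_{\alpha_i}(q_i)\le 0$, since $q_i-1\le0$ and the Gamma factors are positive because $q_i<\alpha_i$ and $q_i<2$. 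So every term of $D_i^{(q_i)}$ is nonnegative, and $D_i^{(q_i)}(z)\ge b_{i0}z^{p_0}\ge \tfrac12b_{i0}z^{p_0}-0$.
\item If $q_i>1$, the two noise terms are negative. However, \eqref{eq:dissipative-c} gives $p_1,p_2\ge0$, so they are bounded on $(0,z_0]$. Also $p_0>\max(p_1,p_2,0)\ge 0$ when $b_{i0}>0$, using \eqref{eq:dissipative-b} with $p_0-q_i=r_{i0}-1>0$. Hence $z^{p_0}\le z_0^{p_0}$ is bounded there, and we can absorb everything into a constant $C_i$.
\end{itemize}
The only delicate point in this first step is keeping track of the sign of $j_\alpha(q)$ and of which exponents may be negative near zero. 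Conditions \eqref{eq:dissipative-b}--\eqref{eq:dissipative-c} are designed exactly for this.

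Next comes source-conservativeness. $H$ may be negative, since $\Phi_0=\log$ or $q_i$ could be small, but $q_i>0$ by assumption, so $\Phi_{q_i}\ge0$ and $H\ge0$. Moreover $H(x,y)\to\infty$ as $x\vee y\to\infty$. I would apply \eqref{mart} to $H$ with the localization $t\wedge\tau_R^+\wedge\tau_\epsilon^-$; $\cL H$ is bounded on compacts, and the integrals converge by Lemma~\ref{lem:stable-power}. This gives
\[
\E_{x,y}H(X_{t\wedge\tau_R^+\wedge\tau_\epsilon^-},Y_{t\wedge\tau_R^+\wedge\tau_\epsilon^-})\le H(x,y)+Ct.
\]
On $\{\tau_R^+\le t\wedge\tau_\epsilon^-\}$ we have $H\ge \min(\Phi_{q_1}(R),\tfrac{a_1}{a_2}\Phi_{q_2}(R))=:h(R)\to\infty$, because jumps are upward and $X\vee Y\ge R$ at that time. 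Hence $\Pp(\tau_R^+\le t\wedge\tau_\epsilon^-)\le (H(x,y)+Ct)/h(R)$. Letting $\epsilon\downarrow0$ and then $R\uparrow\infty$ gives $\Pp(\tau_\infty\le t,\ \tau_\infty\le\tau_0)=0$ for every $t$, and the union over rational $t$ yields \eqref{eq:source-conservative}.

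Finally, for the occupation bound I would use \eqref{mart} together with $\cL H\le C-cV$, where $V=x^{q_1+r_{10}-1}+y^{q_2+r_{20}-1}$, and $H\ge0$. This gives
\[
c\,\E\int_0^{T\wedge\tau_R^+\wedge\tau_\epsilon^-}V\,\dd s\le H(x,y)+CT.
\]
The main point is that \eqref{mart} has $\cL H$ inside the expectation with a favorable sign, so after rearranging, monotone convergence applies. Letting $\epsilon\downarrow0$ and $R\uparrow\infty$, and using $\tau_R^+\wedge\tau_\epsilon^-\uparrow\tau_0\wedge\tau_\infty=\tau_0$ almost surely by conservativeness, we get $c\,\E\int_0^{T\wedge\tau_0}V\dd s\le H(x,y)+CT$. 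Dividing by $cT$ gives the stated estimate.
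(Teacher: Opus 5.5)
Your proposal is correct and follows essentially the same route as the paper. You obtain the pointwise bound $D_i^{(q_i)}(z)\ge \tfrac12 b_{i0}z^{q_i+r_{i0}-1}-C_i$ from the sign of $j_{\alpha_i}(q_i)$ and the exponent comparison, apply the localized Itô inequality to the nonnegative function $H$ to bound the probability of reaching level $R$ before the boundary, and let $\varepsilon\downarrow0$ and then $R\to\infty$ to get source-conservativeness and the occupation estimate. One small remark: the phrase ``because jumps are upward'' is not what gives $H\ge h(R)$ at $\tau_R^+$. Right-continuity, which gives $X\vee Y\ge R$ there, together with the fact that $H$ is nondecreasing in each coordinate, already suffices.
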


\subsection{Boundary behavior and extinction}
 We next turn to the accessibility of the coordinate axes.  The
following theorem gives boundary nonattainment criteria for both coordinates
and shows that, when \(0\le\theta_2<1\), divergent cumulative exposure to
\(X\) forces \(Y\) to become extinct in finite time. 

\begin{theorem} 
\label{thm:boundary-clock}
Assume that the model $(X, Y)$ in \eqref{eq:noimm-system} is
source-conservative.
\begin{enumerate}[label=\textup{(\roman*)}]
\item If \(r_1\ge0\), then \(\Pp_{x,y}(\tauX<\infty)=0\).
\item If \(r_2\ge0\) and \(\theta_2\ge1\), then
\(\Pp_{x,y}(\tauY<\infty)=0\).
\item If \(0\le\theta_2<1\), then
\beqlb \label{eq:basic-clock}
 a_2(1-\theta_2)\E_{x,y}\int_0^{\tau_0}X_s^{\kappa_2}\dd s
 =a_2(1-\theta_2)\E_{x,y}\int_0^{\tauY}X_s^{\kappa_2}\dd s \le y^{1-\theta_2}.
\eeqlb
In particular,
\begin{equation}
 \left\{\int_0^\infty X_s^{\kappa_2}\dd s=\infty\right\}
 \subseteq\{\tauY<\infty\}
 \qquad\text{almost surely}.
 \label{eq:exposure-implies-extinction}
\end{equation}
Moreover, for every \(T,L>0\),
\beqnn
 \Pp_{x,y}\left(
 \tauY>T,
 \ \int_0^T X_s^{\kappa_2}\dd s\ge L
 \right)
 \le\frac{y^{1-\theta_2}}{a_2(1-\theta_2)L}.
 \eeqnn
\end{enumerate}
\end{theorem}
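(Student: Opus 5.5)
For parts (i) and (ii) I plan to use inverse-power test functions. For (i) take \(f(x,y)=x^{-p}\) with a small \(p>0\). The interaction drift contributes \(a_1x^{\theta_1}y^{\kappa_1}\cdot(-p)x^{-p-1}\le0\), so it can be dropped. The self-regulation drift contributes \(pb_{10}x^{r_{10}-1-p}\). The diffusion contributes \(p(p+1)b_{11}x^{r_{11}-2-p}\). By Lemma~\ref{lem:stable-power} (stable homogeneity with exponent \(q=-p<\alpha_1\)), the jump part equals \(-b_{12}j_{\alpha_1}(-p)x^{r_{12}-\alpha_1-p}\); note \(j_{\alpha_1}(-p)<0\), so this term is positive.

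Hence \(\cL f(x,y)\le C\,x^{-p}\bigl(x^{r_{10}-1}+x^{r_{11}-2}+x^{r_{12}-\alpha_1}\bigr)\), with only the terms having \(b_{1j}>0\) present. Since \(r_1\ge0\), every exponent appearing is \(\ge0\). So on \(\{x\le1\}\) we get \(\cL f\le Cf\). For large \(x\) the bracket may grow, but we only need the bound on \([\epsilon,R]\) before \(\tau_R^+\).

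Applying \eqref{mart} to \(e^{-Ct}(1+f)\), or via Gronwall, gives
\[
\E_{x,y}\bigl[X^{-p}_{t\wedge\tau_R^+\wedge\tau_\epsilon^-}\bigr]\le C_R(t)\,(1+x^{-p}),
\]
uniformly in \(\epsilon\). On the event that \(X\) hits \(\epsilon\) before time \(t\wedge\tau_R^+\), the left side is at least \(\epsilon^{-p}\). Letting \(\epsilon\downarrow0\) shows that \(X\) does not reach \(0\) before \(t\wedge\tau_R^+\). Finally let \(R\to\infty\), using source-conservativeness (so \(\tau_R^+\to\infty\) before \(\tau_0\)), and then \(t\to\infty\).

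A subtlety is that \(\tau_\epsilon^-\) also involves \(Y\). I handle this by stopping only at \(\inf\{t:X_t\le\epsilon\}\wedge\tau_\delta^{Y,-}\) and then letting \(\delta\to0\), or simply by working up to \(\tau_0\); if \(Y\) hits zero first, \(X\) is frozen at a positive value anyway.

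For (ii) use \(f=y^{-p}\) in the same way. The interaction term is now \(pa_2y^{\theta_2-1-p}x^{\kappa_2}\). Since \(\theta_2\ge1\), this is at most \(pa_2x^{\kappa_2}y^{-p}\), and \(x^{\kappa_2}\le R^{\kappa_2}\) before \(\tau_R^+\). The remaining terms are handled exactly as in (i) using \(r_2\ge0\). I expect the main care point here to be the localization bookkeeping, namely that the constants depend on \(R\) only.

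For (iii) I take \(g(y)=y^{1-\theta_2}\), which is concave and increasing. Then
\[
\cL g=-(1-\theta_2)\bigl(a_2x^{\kappa_2}+b_{20}y^{r_{20}-\theta_2}\bigr)-\bigl[\text{diffusion and jump terms}\bigr].
\]
By concavity the diffusion term \((1-\theta_2)(-\theta_2)b_{21}y^{r_{21}-1-\theta_2}\) is \(\le0\), and the jump integrand \(g(y+z)-g(y)-zg'(y)\) is \(\le0\). So \(\cL g\le-a_2(1-\theta_2)x^{\kappa_2}\). One must check that \(\cL g\) is locally bounded, which holds because \(1-\theta_2\in(0,1]\subset(0,\alpha_2)\).

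Since \(g\ge0\), \eqref{mart} gives
\[
a_2(1-\theta_2)\,\E_{x,y}\int_0^{t\wedge\tau_R^+\wedge\tau_\epsilon^-}X_s^{\kappa_2}\dd s\le y^{1-\theta_2}.
\]
Monotone convergence in \(\epsilon\downarrow0\), \(R\uparrow\infty\) (using source-conservativeness) and \(t\uparrow\infty\) yields the bound with \(\tau_0\). The equality of the two integrals holds for the following reason. If \(\tau_0=\tauX<\infty\), then \(X\equiv0\) after \(\tau_0\), so the integrals agree. If \(\tau_0=\tauY\), they agree trivially.

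Finiteness of the expectation implies \(\int_0^{\tau_0}X^{\kappa_2}\dd s<\infty\) a.s. On \(\{\int_0^\infty X^{\kappa_2}=\infty\}\), the variable \(X\) is not eventually frozen at \(0\), so \(\tau_0\ne\tauX\) if \(\tau_0<\infty\). Hence \(\tau_0<\infty\) and \(\tau_0=\tauY\), i.e.\ \(\tauY<\infty\).

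The last bound follows from Markov's inequality. On \(\{\tauY>T\}\), the integral \(\int_0^TX^{\kappa_2}\dd s\) is at most \(\int_0^{\tauY}X^{\kappa_2}\dd s\), whose expectation is bounded by \(y^{1-\theta_2}/(a_2(1-\theta_2))\).
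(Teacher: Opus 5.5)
Your proposal is correct and follows the paper's proof essentially step by step. It uses inverse-power test functions with a Gronwall bound and localization (including the continuous downward crossing of $\epsilon$) for (i)--(ii). For (iii) it uses the concave function $y^{1-\theta_2}$ with \eqref{mart}, source-conservativeness and Markov's inequality.

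There are two harmless slips. The jump coefficient for $x^{-p}$ is $-p\,b_{12}j_{\alpha_1}(-p)$, so you dropped the factor $p$. In (ii), $y^{\theta_2-1}\le 1$ holds only for $y\le 1$, so on $\{y\le R\}$ you should bound it by $R^{\theta_2-1}$ instead.
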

 The exposure criterion in
Theorem~\ref{thm:boundary-clock}\textup{(iii)} has the following direct
consequence when \(X\) admits an eventual polynomial lower bound.

\begin{corollary}
\label{cor:lower-exposure}
 Under the assumptions of
Theorem~\ref{thm:boundary-clock}\textup{(iii)}, suppose that there exist
 constants \(c_0>0\) and \(\beta\ge0\), together with an almost
surely finite random time \(T_0\), such that, almost surely, 
\[
 X_t\ge c_0(1+t)^{-\beta},
\qquad t\ge T_0. 
\]
 If \(\beta\kappa_2\le1\), then \(\tauY<\infty\) almost surely. 
\end{corollary}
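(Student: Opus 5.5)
We deduce the claim from the exposure inclusion \eqref{eq:exposure-implies-extinction} in Theorem~\ref{thm:boundary-clock}\textup{(iii)}. It suffices to show that the event \(\{\tauY=\infty\}\) is contained, up to a null set, in \(\{\int_0^\infty X_s^{\kappa_2}\dd s=\infty\}\). Then \eqref{eq:exposure-implies-extinction} gives \(\Pp_{x,y}(\tauY=\infty)=0\).

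First, work on \(\{\tauY=\infty\}\) and check that \(X\) is never frozen there. For the source-stopped process, \(\tau_0=\tauX\wedge\tauY\). If \(\tauX<\infty\) occurred first, then \(X_t=0\) for all \(t\ge\tauX\), which contradicts the hypothesis \(X_t\ge c_0(1+t)^{-\beta}>0\) for all \(t\ge T_0\) with \(T_0<\infty\). Hence, almost surely, \(\tauX=\infty\). If also \(\tauY=\infty\), then \(\tau_0=\infty\) and the unstopped dynamics run for all time.

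On this event we bound the exposure directly. Since \(\kappa_2>0\) and \(X_t\ge c_0(1+t)^{-\beta}\) for \(t\ge T_0\),
\[
\int_0^\infty X_s^{\kappa_2}\dd s\ \ge\ c_0^{\kappa_2}\int_{T_0}^\infty (1+s)^{-\beta\kappa_2}\dd s=\infty,
\]
because \(\beta\kappa_2\le1\) and \(T_0<\infty\) almost surely. So \(\{\tauY=\infty\}\subseteq\{\int_0^\infty X_s^{\kappa_2}\dd s=\infty\}\subseteq\{\tauY<\infty\}\) up to null sets, which forces \(\Pp_{x,y}(\tauY=\infty)=0\).

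In fact the first step is not strictly needed. The lower bound alone gives \(\int_0^\infty X_s^{\kappa_2}\dd s=\infty\) almost surely, and \eqref{eq:exposure-implies-extinction} then yields \(\tauY<\infty\) almost surely. The only point to check is that the hypothesis on \(X\) is stated for the source-stopped process. If \(\tauY<\infty\), then \(X\) is frozen at the positive value \(X_{\tauY}\), and its exposure is again infinite. The hypothesis and the integral in \eqref{eq:exposure-implies-extinction} are therefore interpreted consistently. There is no real obstacle here: the corollary is the divergence of \(\int^\infty (1+s)^{-\beta\kappa_2}\dd s\) combined with the exposure inclusion.
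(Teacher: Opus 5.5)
Your proof is correct and is essentially the paper's argument: the lower bound gives $\int_0^\infty X_s^{\kappa_2}\dd s\ge c_0^{\kappa_2}\int_{T_0}^\infty(1+s)^{-\beta\kappa_2}\dd s=\infty$ almost surely when $\beta\kappa_2\le1$, and the inclusion \eqref{eq:exposure-implies-extinction} then gives $\tauY<\infty$ almost surely. As you note yourself, the preliminary step showing $\tauX=\infty$ on $\{\tauY=\infty\}$ is unnecessary, and the paper omits it.
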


 The following result complements the preceding pathwise
criteria by giving a sufficient condition, expressed entirely in terms of the
model parameters, for almost-sure extinction of \(Y\). 

\begin{theorem} 
\label{thm:kappa-le-r}
Assume that the model $(X, Y)$ in \eqref{eq:noimm-system} is
source-conservative. Suppose that $ 0\le\theta_2<1,$ $b_{10}>0,$ and
\beqnn
{ r_1,r_2\ge0,
 \qquad b_{11}+b_{12}>0,
 \qquad b_{21}+b_{22}>0,
 \qquad
 \frac{r_2\kappa_2}{r_2+1-\theta_2}< r_1.}
\eeqnn
Then
\beqnn
 \Pp_{x,y}(\tauY<\infty)=1
 \qquad\text{for every }\ x,y>0.
\eeqnn
\end{theorem}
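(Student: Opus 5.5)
We argue by contradiction on the event \(\{\tauY=\infty\}\), using the exposure criterion \eqref{eq:exposure-implies-extinction} of Theorem~\ref{thm:boundary-clock}(iii). By that result it suffices to show that \(\int_0^\infty X_s^{\kappa_2}\dd s=\infty\) almost surely on \(\{\tauY=\infty\}\). By Theorem~\ref{thm:boundary-clock}(i), since \(r_1\ge0\), \(X\) never hits zero. Hence on \(\{\tauY=\infty\}\) we have \(\tau_0=\infty\), and both coordinates stay positive forever. The plan is then to combine a polynomial lower bound for \(X\) with a polynomial lower bound for the time during which \(Y\) can survive. This is exactly the trade-off encoded in the inequality \(r_2\kappa_2/(r_2+1-\theta_2)<r_1\).

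\emph{Step 1 (lower comparison for \(X\)).} Since the interaction drift \(a_1X^{\theta_1}Y^{\kappa_1}\) is nonnegative, \(X\) dominates (pathwise, by a comparison argument for the first equation with \(Y\) treated as given) the autonomous nonlinear branching process \(\Xlow\) solving the first equation with \(a_1=0\). For this scalar process, \(b_{10}>0\), \(r_1\ge0\) and \(b_{11}+b_{12}>0\) put us in the setting of Li et al.~\cite{LiYangZhou2019}: zero is inaccessible and \(\Xlow_t\to0\). We aim for the almost sure rate \(\Xlow_t\ge t^{-1/r_1-\varepsilon}\) for large \(t\) when \(r_1>0\), and a slower-than-polynomial decay when \(r_1=0\). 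To get it, apply It\^o's formula to \(\Xlow^{-p}\) or \(\log \Xlow\). The dominant term of the generator near zero is \(\asymp b_1 x^{r_1}\) times a positive constant, which gives \(\E\,\Xlow_t^{-r_1}\lesssim 1+t\) together with supermartingale-type control. A Borel--Cantelli argument along \(t=2^n\) then yields the pathwise bound. This gives \(X_t\ge c_0(1+t)^{-\beta}\) for any \(\beta>1/r_1\).

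\emph{Step 2 (close the argument).} If \(\beta\kappa_2\le1\), which happens when \(\kappa_2\le r_1\), Corollary~\ref{cor:lower-exposure} finishes the proof. In general \(\kappa_2\) may exceed \(r_1\), and here the factor \(r_2/(r_2+1-\theta_2)<1\) must be used. The idea is that \(Y\) is itself dragged to zero, and at a rate controlled by its own autonomous decay. Writing \(V=Y^{1-\theta_2}\), its drift contains \(-a_2(1-\theta_2)X^{\kappa_2}\) plus self-regulation terms of order \(V^{(r_2+1-\theta_2)/(1-\theta_2)}\), together with martingale noise whose size near zero is governed by \(r_2\). Compare \((X,Y)\) from above in \(Y\) with the one-way system in which \(X\) is replaced by \(\Xlow\), as in Ren et al.~\cite{RenXiongYangZhou2022}. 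Because \(-a_2Y^{\theta_2}X^{\kappa_2}\) is decreasing in \(X\), we have \(Y\le \widetilde Y\), where \(\widetilde Y\) solves the second equation driven by \(\Xlow\). For this one-way system, the nearly sharp extinction criterion of that reference applies. Autonomously, \(\widetilde Y\) decays like \(t^{-1/r_2}\). The suppressive clock \(\int \Xlow^{\kappa_2}\), compared with \(\widetilde Y^{1-\theta_2}\sim t^{-(1-\theta_2)/r_2}\), forces extinction exactly when
\[
\beta\kappa_2<1+\frac{1-\theta_2}{r_2},
\]
that is, when \(\kappa_2 r_2/(r_2+1-\theta_2)<r_1\) after letting \(\beta\downarrow1/r_1\). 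Hence \(\widetilde\tau_0^Y<\infty\) almost surely, and therefore \(\tauY\le\widetilde\tau_0^Y<\infty\).

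\emph{Main obstacle.} The delicate part is Step 2: proving extinction of \(\widetilde Y\) in the regime \(\beta\kappa_2>1\), where the exposure integral converges. The plan here is to apply It\^o's formula to \(V_t=\widetilde Y_t^{1-\theta_2}\) on dyadic time blocks \([2^n,2^{n+1}]\). On each block the deterministic push \(a_2(1-\theta_2)\int \Xlow^{\kappa_2}\gtrsim 2^{n(1-\beta\kappa_2)}\) must be shown to beat the typical size \(2^{-n(1-\theta_2)/r_2}\) of \(V\) on survival. This needs an upper estimate for \(V\) via a test function \(V^{-p}\) together with a conditional Borel--Cantelli argument giving that extinction in some block has conditional probability bounded below. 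One also has to ensure that the noise conditions \(b_{21}+b_{22}>0\) produce enough fluctuation downward while preventing upward excursions from spoiling the rate.
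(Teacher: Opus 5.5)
Your proposal is correct, and its working argument is the paper's own: couple \(\widetilde Y\) to the \(Y\)-noises with \(\Xlow\) in its drift, use \(\Xlow\le X\) (Lemma~\ref{lem:lower-comparison}) and the monotone comparison \cite[Proposition~3.6]{RenXiongYangZhou2022} to get \(Y\le\widetilde Y\), and conclude \(\tauY\le\widetilde\tau_0^Y<\infty\) from the one-way criterion \cite[Theorem~1.7 and Example~1.12]{RenXiongYangZhou2022}. Once that citation is made, the rest is not needed: the opening reduction to divergent unweighted exposure (which, as you note yourself, \(X\ge\Xlow\) cannot deliver when \(\beta\kappa_2>1\)), the pathwise rate for \(\Xlow\) in Step~1, and the dyadic-block plan can all be dropped, since the cited result carries the whole extinction argument.
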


\begin{remark}
 The implication in \eqref{eq:exposure-implies-extinction} is
not reversible without additional assumptions: finite unweighted exposure
does not preclude extinction caused by the Brownian or stable branching
fluctuations.  In the multiplicative regimes below, the integrating factor
associated with the autonomous dynamics of \(Y\) instead yields a weighted
exposure clock that characterizes \(\tauY\). 
\end{remark}
 
\subsection{Multiplicative regimes}

 In the multiplicative regimes, we use the relative-jump
realization of Proposition~\ref{prop:relative-jump}.  For later use, let
\(\widetilde M_1, \widetilde M_2\) be independent compensated Poisson random measures with respective \vthirtyfour{compensators} \(b_{12}\dd t\,\mu_1(\dd v)\) and
\(b_{22}\dd t\,\mu_2(\dd v)\).  Assume that these
measures and \(B_1,B_2\) are mutually independent. The
associated geometric L\'evy processes \(G_1,G_2\) are defined by 
\begin{empheq}[left=\empheqlbrace]{equation}
	\begin{aligned}
		\dd G_{1,t}
		&=G_{1,t-}\left[
		-b_{10}\dd t+\sqrt{2b_{11}}\dd B_1(t)
		+\int_0^\infty v\,\widetilde M_1(\dd t,\dd v)
		\right],
		\qquad G_{1,0}=1,\\
		\dd G_{2,t}
		&=G_{2,t-}\left[
		-b_{20}\dd t+\sqrt{2b_{21}}\dd B_2(t)
		+\int_0^\infty v\,\widetilde M_2(\dd t,\dd v)
		\right],
		\qquad G_{2,0}=1.
	\end{aligned}
	\label{eq:G2}
\end{empheq}	
 
 We first assume that all active autonomous terms in the
\(Y\)-equation are multiplicative.  The resulting integrating-factor identity
gives an exact weighted-clock characterization of \(\tauY\) and yields
sufficient conditions for almost-sure extinction. 
 
\begin{theorem}
\label{thm:FK-rate}
Assume that the model \eqref{eq:noimm-system} is source-conservative, \(0\le\theta_2<1\), and
\beqnn
	b_{20}=0\ \text{or}\ r_{20}=1,
\qquad b_{21}=0\ \text{or}\ r_{21}=2,
\qquad b_{22}=0\ \text{or}\ r_{22}=\alpha_2.
\eeqnn 
 Let \(G_2\) be given by \eqref{eq:G2}. 
\begin{enumerate}[label=\textup{(\roman*)}]
\item For every \(t<\tau_0\),
\begin{equation}
 Y_t=G_{2,t}\left[
 y^{1-\theta_2}-(1-\theta_2)a_2
 \int_0^tG_{2,u}^{\theta_2-1}X_u^{\kappa_2}\dd u\right]^{1/(1-\theta_2)}.
 \label{eq:FK-identity}
\end{equation}
Moreover,
\begin{equation}
 \tauY=\inf\left\{t\ge0:
 \int_0^{t\wedge\tau_0}G_{2,u}^{\theta_2-1}X_u^{\kappa_2}\dd u
 =\frac{y^{1-\theta_2}}{(1-\theta_2)a_2}\right\}.
 \label{eq:FK-hitting}
\end{equation}
In particular,
\begin{equation}
 \{\tauY=\infty\}
 \subseteq\left\{
 \int_0^{\tau_0}G_{2,u}^{\theta_2-1}X_u^{\kappa_2}\dd u
 \le\frac{y^{1-\theta_2}}{(1-\theta_2)a_2}\right\}.
 \label{eq:FK-survival-budget}
\end{equation}
\item  Assume additionally that $b_{10}>0,$ $b_2>0$, and $r_1 \ge 0$. Then
\begin{enumerate}[label=\textup{(\alph*)}]
\item If \(r_1=0\) and $(1-\theta_2)b_2>\kappa_2b_1,$
then $\Pp_{x,y}(\tauY<\infty)=1$ for every $x,y>0$.
\item If {
\((1-\theta_2)b_2=\kappa_2b_1\)} and
\beqnn
 r_{10}=1,
 \qquad b_{11}=0\ \text{or}\ r_{11}=2,
 \qquad b_{12}=0\ \text{or}\ r_{12}=\alpha_1,
\eeqnn 
then $\Pp_{x,y}(\tauY<\infty)=1$ for every $x,y>0$.
\item If \(r_1>0\), then
{ \(\Pp_{x,y}(\tauY<\infty)=1\) for every \(x,y>0\).}
\end{enumerate}
\end{enumerate}
\end{theorem}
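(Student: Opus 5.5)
The plan is to prove part (i) by an exact integrating-factor computation, and part (ii) by showing that the weighted clock in \eqref{eq:FK-hitting} diverges almost surely.

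\emph{Part (i).} I would work in the relative-jump realization of Proposition~\ref{prop:relative-jump}. Under the stated hypotheses every active autonomous term of the \(Y\)-equation is multiplicative, so on \([0,\tau_0)\)
\[
\dd Y_t=Y_{t-}\Bigl[-b_{20}\dd t+\sqrt{2b_{21}}\dd B_2(t)+\int_0^\infty v\,\widetilde M_2(\dd t,\dd v)\Bigr]-a_2Y_t^{\theta_2}X_t^{\kappa_2}\dd t .
\]
Here \(G_2\) is driven by the same \(B_2\) and \(\widetilde M_2\), and \(G_2>0\) because its jumps are multiplications by \(1+v>0\). I would set \(Z_t:=Y_t/G_{2,t}\) and apply It\^o's formula to \(Y\cdot G_2^{-1}\).

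The two processes \(Y\) and \(G_2\) jump by the same relative factor \((1+v)\), so \(Z\) is continuous. The Brownian terms cancel, together with the It\^o correction from the bracket \([Y,G_2^{-1}]\), and the compensator terms cancel as well. This leaves the pathwise ODE
\[
\dd Z_t=-a_2Z_t^{\theta_2}G_{2,t}^{\theta_2-1}X_t^{\kappa_2}\dd t .
\]
Since \(0\le\theta_2<1\), it follows that \(\dd Z_t^{1-\theta_2}=-(1-\theta_2)a_2G_{2,t}^{\theta_2-1}X_t^{\kappa_2}\dd t\) as long as \(Z>0\). Integrating gives \eqref{eq:FK-identity}.

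Because \(G_2\) is strictly positive and finite on compacts, \(Y\) reaches \(0\) exactly when the bracket in \eqref{eq:FK-identity} does. This gives \eqref{eq:FK-hitting}, and \eqref{eq:FK-survival-budget} follows immediately.

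\emph{Part (ii): reduction to divergence of the clock.} Since \(r_1\ge0\), Theorem~\ref{thm:boundary-clock}(i) gives \(\tauX=\infty\) a.s., so \(\tau_0=\tauY\). By \eqref{eq:FK-survival-budget}, it therefore suffices to show that
\[
\int_0^\infty G_{2,u}^{\theta_2-1}X_u^{\kappa_2}\dd u=\infty \quad\text{a.s. on } \{\tauY=\infty\}.
\]
Write \(G_{2,t}=\e^{\xi_{2,t}}\), where \(\xi_2\) is a L\'evy process. By the stable normalization of \(\mu_2\) together with Lemma~\ref{lem:stable-power} (the \(q\to0\) limit of \(j_{\alpha_2}\)), I expect \(\E\xi_{2,1}=-b_2\). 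The strong law then gives \(\xi_{2,t}/t\to-b_2\), so that \(G_{2,t}^{\theta_2-1}=\e^{((1-\theta_2)b_2+o(1))t}\).

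For \(X\) I would use a lower comparison. Since \(a_1X^{\theta_1}Y^{\kappa_1}\ge0\), \(X\) dominates pathwise the autonomous scalar process \(\Xlow\) that solves the \(X\)-equation without the interaction term. This is the scalar lower comparison process mentioned in the introduction.

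\emph{The three cases.}
\begin{itemize}
\item[(a)] When \(r_1=0\), I need a logarithmic decay rate for \(\Xlow\). Near zero the minimal-exponent terms dominate, and the other terms carry strictly larger exponents, so they are negligible as \(\Xlow\to0\). Comparing \(\log\Xlow\) with a L\'evy process of drift \(-(b_1+\varepsilon)\) should give \(\liminf_t t^{-1}\log\Xlow_t\ge-b_1\). The integrand is then at least \(\exp\{[(1-\theta_2)b_2-\kappa_2b_1-\varepsilon']t\}\), which diverges for small \(\varepsilon'\).
\item[(b)] In the critical case \(X\) is fully multiplicative, so \(X_t\ge xG_{1,t}\). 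The integrand is then bounded below by \(x^{\kappa_2}\exp\{\kappa_2\xi_{1,t}-(1-\theta_2)\xi_{2,t}\}\). The exponent is a L\'evy process with zero mean that is not identically zero, since \(b_2>0\) and \(\xi_2\) is independent of \(\xi_1\). Such a process oscillates, with \(\limsup=+\infty\), and the exponential functional of an oscillating L\'evy process diverges.
\item[(c)] When \(r_1>0\), the autonomous \(X\) decays only polynomially. I would aim for \(\Xlow_t\ge c\,t^{-\beta}\) eventually, for some finite \(\beta\), for instance by comparing \(\Xlow^{-r_1}\) with a process of at most linear growth. Polynomial decay against the exponential growth of \(G_2^{\theta_2-1}\) forces divergence.
\end{itemize}

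\emph{Main obstacle.} The hardest step is the almost-sure lower bound on the decay of \(\Xlow\) in (a) and (c). This requires controlling the higher-order terms uniformly near zero and turning an It\^o computation for \(\log\Xlow\) or \(\Xlow^{-r_1}\) into a pathwise liminf bound. I would first attempt this through the martingale strong law applied to the local-martingale part of \(\log\Xlow\), whose quadratic variation grows at most linearly, combined with a Borel--Cantelli argument over integer times.
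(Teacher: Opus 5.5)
Your proposal is correct and is essentially the paper's proof. Part (i) is the same integrating-factor computation: the paper applies It\^o's formula directly to $G_{2,t}^{\theta_2-1}Y_t^{1-\theta_2}$ rather than to $Y/G_2$. Part (ii) uses the same three ingredients as the paper: the comparison $X\ge\Xlow$, the rate $t^{-1}\log G_{2,t}\to-b_2$, and the Bertoin--Yor divergence criterion for the zero-mean L\'evy exponent in case (b).

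The step you flag as the main obstacle is already Proposition~\ref{prop:lower-log-rate}. It gives $t^{-1}\log\Xlow_t\to-b_1\1_{\{r_1=0\}}$ from the It\^o formula for $\log\Xlow$, a Kronecker-type martingale strong law, and $\Xlow\to0$. In particular, subexponential decay suffices in case (c), so you do not need a polynomial bound through $\Xlow^{-r_1}$, which in any case need not grow at most linearly pathwise. Two small corrections:
\begin{itemize}
\item In case (b) you should justify $X\ge x G_1$ by the comparison with $\Xlow$, not by calling $X$ fully multiplicative, since $\theta_1$ is arbitrary there.
\item The exponent in case (b) can vanish identically in the noiseless case, despite $b_2>0$. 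Divergence then holds trivially, because the integrand is the constant $x^{\kappa_2}$, rather than by oscillation.
\end{itemize}
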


\begin{remark}
	Under the nondegeneracy assumptions \vthirtyfour{in~\cite{RenXiongYangZhou2022}}, the conclusions in
	Theorem~\ref{thm:FK-rate}\textup{(ii)(a)} and~\textup{(ii)(c)} can also be
	recovered from \cite[Theorems~1.7 and~1.9]{RenXiongYangZhou2022} through a
	one-way comparison argument.  The critical equality in
	part~\textup{(ii)(b)} is not covered by those results and is established here.
\end{remark}

\begin{remark}
	The condition \(r_1>0\) in
	Theorem~\ref{thm:FK-rate}\textup{(ii)(c)} places the suppressing coordinate
	\(X\) in a subexponential lower-decay regime, whereas
	\(G_2^{\theta_2-1}\) grows exponentially.  Consequently, the weighted exposure
	diverges for every \(\kappa_2>0\), even though the corresponding unweighted
	exposure need not diverge.
\end{remark}
	
The preceding theorem assumes source-conservativeness and
imposes the multiplicative structure only on the autonomous terms in the
\(Y\)-equation.  We now impose the corresponding structure on both
coordinates.  The resulting factorization of \(X\) establishes
source-conservativeness and reduces the extinction problem to a comparison of
the effective decay rates.  Specifically, assume 
\begin{equation}
 \theta_1=1,
 \qquad 0\le\theta_2<1,
 \qquad
 \begin{cases}
 b_{i0}=0\ \text{or}\ r_{i0}=1,\\
 b_{i1}=0\ \text{or}\ r_{i1}=2,\\
 b_{i2}=0\ \text{or}\ r_{i2}=\alpha_i,
 \end{cases}
 \quad i=1,2.
 \label{eq:fully-multiplicative}
\end{equation}

\begin{theorem}
\label{thm:critical-transition}
 Assume \eqref{eq:fully-multiplicative}, and let \(G_1,G_2\) be
given by \eqref{eq:G2}.  Then assertion~\textup{(i)} below holds.  If, in
addition, \(b_1,b_2>0\), assertions~\textup{(ii)--(iv)} hold as well. 
\begin{enumerate}[label=\textup{(\roman*)}]
{
\item The model is source-conservative, and \(X\) cannot reach zero before
\(Y\); in particular, \(\tau_0=\tauY\) almost surely.  For every
\(t<\tauY\),
\beqlb\label{eq:critical-X-factor}
 X_t=xG_{1,t}\exp\left(a_1\int_0^tY_u^{\kappa_1}\dd u\right).
\eeqlb
Moreover, \eqref{eq:FK-identity}, \eqref{eq:FK-hitting} and \eqref{eq:FK-survival-budget} hold.}
\item If $(1-\theta_2)b_2\ge\kappa_2b_1,$ then
{ $\Pp_{x,y}(\tauY<\infty)=1$ for every $x, y > 0.$}
\item If $(1-\theta_2)b_2<\kappa_2b_1,$
then for every fixed $y>0$,
{
\[
 \lim_{x\downarrow0}\Pp_{x,y}(\tauY<\infty)=0.
\]}
If, in addition, \(b_{11}+b_{12}>0\), then
{
\[
 \Pp_{x,y}(\tauY<\infty)>0
 \qquad\text{for every }x,y>0.
\]}
Consequently, for each \(y>0\) and all sufficiently small \(x>0\),
{
\[
 0<\Pp_{x,y}(\tauY<\infty)<1.
\]}
\item { If \((1-\theta_2)b_2<\kappa_2b_1\), then,} on
\(\{\tauY=\infty\}\),
{
\[
 \int_0^\infty X_t^{\kappa_2}\dd t<\infty,
 \qquad
 \lim_{t\to\infty}\frac1t\log X_t=-b_1,
 \qquad
 Y_t\longrightarrow0.
\]}
 On $\{\tau_0^Y = \infty\}$,  if  $\frac{y^{1-\theta_2}}{(1-\theta_2)a_2}- 
 \int_0^\infty G_{2,t}^{\theta_2-1}X_t^{\kappa_2}\dd t>0,$ 
then
\[
 \lim_{t\to\infty}\frac1t\log Y_t=-b_2.
\] 
 On the same event $\{\tau_0^Y = \infty\}$, if $\frac{y^{1-\theta_2}}{(1-\theta_2)a_2}-
 \int_0^\infty G_{2,t}^{\theta_2-1}X_t^{\kappa_2}\dd t=0,$  then
\[
 \limsup_{t\to\infty}\frac1t\log Y_t\le-b_2.
\]
\end{enumerate}
\end{theorem}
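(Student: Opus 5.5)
The plan is to work throughout with the relative-jump realization of Proposition~\ref{prop:relative-jump}. In the fully multiplicative regime \eqref{eq:fully-multiplicative} the autonomous parts of both equations are linear in the state, so $X$ and $Y$ should factor through the geometric L\'evy processes $G_1,G_2$ of \eqref{eq:G2}. Write $G_{i,t}=\exp(\xi_{i,t})$, where each $\xi_i$ is a L\'evy process. By the strong law for L\'evy processes, $\xi_{i,t}/t\to\E\xi_{i,1}$ almost surely.

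\textbf{Step 0 (decay rates).} Show that $\E\xi_{i,1}=-b_i$. The drift and Brownian parts contribute $-b_{i0}-b_{i1}$. The jump part contributes $-b_{i2}\int_0^\infty(v-\log(1+v))\mu_i(\dd v)$. With the normalization of $\mu_i$ this integral should equal $1$, which I would check by integrating by parts and using Beta/Gamma identities. Since $r_i=0$ in this regime, this gives $\E\xi_{i,1}=-b_i$. Consequently
\[
\tfrac1t\log G_{i,t}\to-b_i
\]
almost surely, and when $b_2>0$ we get $\int_0^\infty G_{2,t}^{\kappa}\dd t<\infty$ for every $\kappa>0$.

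\textbf{Step 1 (assertion (i)).} Work up to $\tau_R^+\wedge\tau_\epsilon^-$. Since $\theta_1=1$, It\^o's formula for $\log X$ (equivalently, the integrating factor $G_1^{-1}$) gives \eqref{eq:critical-X-factor}. The integrating-factor computation from Theorem~\ref{thm:FK-rate} applies to $Y$ without using source-conservativeness, and gives \eqref{eq:FK-identity} locally. In particular the bracket in \eqref{eq:FK-identity} is at most $y^{1-\theta_2}$, so $Y_t\le yG_{2,t}$.

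\begin{itemize}
\item \emph{No explosion of $Y$ or $X$.} The bound $Y_t\le yG_{2,t}$ shows $Y$ cannot explode. Hence $\int_0^tY^{\kappa_1}\dd u\le y^{\kappa_1}\int_0^tG_2^{\kappa_1}\dd u<\infty$ on bounded intervals, and \eqref{eq:critical-X-factor} shows $X$ stays finite and strictly positive.
\item \emph{Consequences.} Thus $\tau_\infty>\tau_0$, which is source-conservativeness. Also $X$ cannot reach zero first, so $\tau_0=\tauY$. The identities \eqref{eq:FK-identity}--\eqref{eq:FK-survival-budget} then follow from Theorem~\ref{thm:FK-rate}(i).
\end{itemize}

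\textbf{Step 2 (the clock under rate ordering $\ge$, assertion (ii)).} Substitute \eqref{eq:critical-X-factor} into the clock. Since $Y\ge0$, the exponential factor in $X$ is at least $1$, so
\[
G_{2,u}^{\theta_2-1}X_u^{\kappa_2}\ \ge\ x^{\kappa_2}e^{L_u},\qquad L:=\kappa_2\xi_1-(1-\theta_2)\xi_2 .
\]
Here $L$ is a L\'evy process with $\E L_1=(1-\theta_2)b_2-\kappa_2b_1\ge0$, and it is nonzero because $b_1,b_2>0$.

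\begin{itemize}
\item \emph{Divergence.} If $\E L_1>0$, then $L\to\infty$. If $\E L_1=0$ and $L$ is nondegenerate, then $L$ oscillates, so $\limsup L=\infty$, and right-continuity makes $\int_0^\infty e^{L}\dd u=\infty$. If $L$ is degenerate with zero mean, then $L\equiv0$ and the integral diverges trivially.
\item \emph{Conclusion.} In all cases $\int_0^\infty e^{L}\dd u=\infty$ almost surely. So on $\{\tauY=\infty\}$ the clock diverges, contradicting \eqref{eq:FK-survival-budget}.
\end{itemize}

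\textbf{Step 3 (assertion (iii)).} Now $\E L_1<0$.

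\begin{itemize}
\item \emph{Upper bound.} Using $Y\le yG_2$,
\[
\int_0^{\tau_0}G_2^{\theta_2-1}X^{\kappa_2}\dd u\le x^{\kappa_2}I,\qquad I:=\int_0^\infty e^{L_u}\exp\Bigl(a_1\kappa_2y^{\kappa_1}\int_0^\infty G_{2,s}^{\kappa_1}\dd s\Bigr)\dd u .
\]
The variable $I$ does not depend on $x$, and it is finite almost surely because $L$ drifts to $-\infty$ linearly and $\int_0^\infty G_2^{\kappa_1}\dd s<\infty$ by Step~0.
\item \emph{Small $x$.} By \eqref{eq:FK-hitting},
\[
\Pp_{x,y}(\tauY<\infty)\le\Pp\Bigl(x^{\kappa_2}I\ge \tfrac{y^{1-\theta_2}}{(1-\theta_2)a_2}\Bigr)\longrightarrow0 \quad\text{as }x\downarrow0 .
\]
\item \emph{Positivity.} Use the lower bound $x^{\kappa_2}\int_0^\infty e^{L}\dd u$ together with \eqref{eq:FK-survival-budget}:
\[
\Pp(\tauY=\infty)\le\Pp\Bigl(x^{\kappa_2}\int_0^\infty e^{L}\dd u\le \tfrac{y^{1-\theta_2}}{(1-\theta_2)a_2}\Bigr).
\]
This is $<1$ as soon as $\int_0^\infty e^{L}\dd u$ has unbounded support. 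That is what Lemma~\ref{lem:clock-unbounded-support} should give when $b_{11}+b_{12}>0$, because then $\xi_1$, hence $L$, is nondegenerate.
\end{itemize}

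\textbf{Step 4 (assertion (iv)).} Work on $\{\tauY=\infty\}$.

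\begin{itemize}
\item \emph{Integrable exposure.} From the same bounds, $X^{\kappa_2}\le x^{\kappa_2}G_1^{\kappa_2}K$ with $K<\infty$ random, so $\int_0^\infty X^{\kappa_2}\dd t<\infty$.
\item \emph{Rate for $X$.} We have
\[
\tfrac1t\log X_t=\tfrac1t\log x+\tfrac1t\xi_{1,t}+\tfrac{a_1}{t}\int_0^tY^{\kappa_1}\dd u\ \to\ -b_1 ,
\]
because the last integral stays bounded.
\item \emph{Decay of $Y$.} Since $Y_t\le yG_{2,t}$, we get $Y_t\to0$ and $\limsup_t\frac1t\log Y_t\le-b_2$.
\item \emph{Rate for $Y$ with positive budget.} If the remaining budget is positive, the bracket in \eqref{eq:FK-identity} converges to a positive limit, so $\frac1t\log Y_t\to-b_2$ exactly.
\end{itemize}

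The main obstacles are Step~0 (pinning down $\E\xi_{i,1}=-b_i$ through the stable integral) and making sure Lemma~\ref{lem:clock-unbounded-support} applies to the Lévy process $L$ in Step~3.
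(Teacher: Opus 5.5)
Your proposal follows the paper's proof essentially step for step: the relative-jump realization, the integrating factors giving $xG_{1,t}\le X_t\le xG_{1,t}\exp\bigl(a_1y^{\kappa_1}\int_0^tG_{2,u}^{\kappa_1}\dd u\bigr)$ and $Y_t\le yG_{2,t}$, the lower clock $x^{\kappa_2}\int e^{L_u}\dd u$ for (ii), an $x$-independent upper clock for (iii), and Lemma~\ref{lem:clock-unbounded-support} for positivity. Your small variations are sound and slightly more elementary. You get finiteness of $I$ directly from $L_t/t\to\E L_1<0$ rather than through Bertoin--Yor. You get positivity through the infinite-horizon budget \eqref{eq:FK-survival-budget} rather than the event $\{\tauY\le T\}$. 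You read off $\limsup_t t^{-1}\log Y_t\le-b_2$ from $Y_t\le yG_{2,t}$ on all of $\{\tauY=\infty\}$.

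The one step that fails as written is the zero-mean, nondegenerate case of Step~2. There, $\limsup_tL_t=\infty$ together with right-continuity does not give $\int_0^\infty e^{L_u}\dd u=\infty$. For instance, take the right-continuous path equal to $n$ on $[n,n+e^{-2n})$, $n\ge1$, and to $-t$ elsewhere. It has $\limsup=\infty$, yet $\int_0^\infty e^{L_u}\dd u\le1+\sum_{n\ge1}e^{-n}<\infty$.

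What you need is the regenerative structure of $L$. Set $T_0:=0$ and $T_n:=\inf\{t\ge T_{n-1}+1:L_t\ge0\}$. These times are finite because $L$ oscillates, and $L_{T_n}\ge0$ by right-continuity. By the strong Markov property, the variables $W_n:=\int_0^1e^{L_{T_n+u}-L_{T_n}}\dd u$ are i.i.d.\ and strictly positive. Hence $\int_0^\infty e^{L_u}\dd u\ge\sum_{n\ge1}W_n=\infty$ almost surely. Alternatively, invoke the Bertoin--Yor criterion, as the paper does.

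Separately, your remark that $L$ is nonzero because $b_1,b_2>0$ is false. In the noiseless case with $\kappa_2b_{10}=(1-\theta_2)b_{20}$ one has $L\equiv0$. This is harmless, since your case analysis treats $L\equiv0$ on its own.
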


\begin{remark}
 By Theorem~\ref{thm:critical-transition}\textup{(i)},
\(\tau_0=\tauY\) almost surely.  Hence, on \(\{\tauY=\infty\}\),
\eqref{eq:FK-survival-budget} gives
\[
 \frac{y^{1-\theta_2}}{(1-\theta_2)a_2}
 -\int_0^\infty
 G_{2,t}^{\theta_2-1}X_t^{\kappa_2}\dd t
 \ge0.
\]
Thus the strict and equality cases in part~\textup{(iv)} exhaust all
possibilities on the event of nonextinction.  If the difference above is negative, then
\eqref{eq:FK-survival-budget} rules out \(\{\tauY=\infty\}\), and hence
\(\tauY<\infty\). 
\end{remark}

\begin{remark}
 Under the effective-rate normalization used here, the strict
inequality \((1-\theta_2)b_2<\kappa_2b_1\) is the counterpart of the threshold
obtained for the one-way model in \cite[Example~1.12\textup{(iv)(a)}]{RenXiongYangZhou2022}.
Part~\textup{(iv)} additionally identifies the joint pathwise decay of the
two coordinates in the present feedback system. 
\end{remark}

 The deterministic fully multiplicative model makes the
weighted-clock comparison explicit.  When the Brownian and stable-noise terms
vanish, \(G_{i,t}=\e^{-b_{i0}t}\), and the weighted clock yields explicit
sufficient conditions for finite-time extinction and nonextinction of \(Y\).

\begin{corollary}
	\label{cor:deterministic-zero-prob}
	 Assume \eqref{eq:fully-multiplicative}, and suppose that 
	$b_{11}=b_{12}=b_{21}=b_{22}=0$ and $b_{10},b_{20}>0.$
	If
	\(\kappa_2b_{10}-(1-\theta_2)b_{20}\le0\), then \(Y\) reaches zero in finite
	time.  If \(\kappa_2b_{10}-(1-\theta_2)b_{20}>0\) and
	\begin{equation}
		\frac{x^{\kappa_2}}{\kappa_2b_{10}-(1-\theta_2)b_{20}}
		\exp\left(\frac{\kappa_2a_1y^{\kappa_1}}{\kappa_1b_{20}}\right)
		<\frac{y^{1-\theta_2}}{(1-\theta_2)a_2},
		\label{eq:det-survival-condition}
	\end{equation}
		then \(Y\) never reaches zero.   On the other hand, if 
	\(\kappa_2b_{10}-(1-\theta_2)b_{20}>0\) and
	\begin{equation}
		\frac{x^{\kappa_2}}{\kappa_2b_{10}-(1-\theta_2)b_{20}}
		>\frac{y^{1-\theta_2}}{(1-\theta_2)a_2},
		\label{eq:det-extinction-condition}
	\end{equation}
	then \(Y\) reaches zero in finite time.  
\end{corollary}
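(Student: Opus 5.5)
Since the noise terms vanish, $G_{i,t}=\e^{-b_{i0}t}$, and we work from the deterministic forms of the identities in Theorem~\ref{thm:critical-transition}(i). Equation \eqref{eq:critical-X-factor} becomes
\[
X_t=x\e^{-b_{10}t}\exp\Bigl(a_1\int_0^tY_u^{\kappa_1}\dd u\Bigr),\qquad t<\tauY .
\]
By \eqref{eq:FK-hitting}, $\tauY<\infty$ exactly when the weighted clock
\[
\Lambda_t:=\int_0^t \e^{(1-\theta_2)b_{20}u}X_u^{\kappa_2}\dd u
\]
reaches the budget $K:=y^{1-\theta_2}/((1-\theta_2)a_2)$. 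Here $\tau_0=\tauY$ by part~(i), and the integrand is continuous and positive for $t<\tauY$. Write $\delta:=\kappa_2b_{10}-(1-\theta_2)b_{20}$. Substituting $X$ gives
\[
\Lambda_t=x^{\kappa_2}\int_0^t \e^{-\delta u}\exp\Bigl(\kappa_2a_1\int_0^uY_s^{\kappa_1}\dd s\Bigr)\dd u .
\]
Everything reduces to bounding the exponential factor $E_u:=\exp(\kappa_2a_1\int_0^uY^{\kappa_1})$ from above and below.

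\emph{Lower bound and the extinction cases.} Since $E_u\ge1$, we get $\Lambda_t\ge x^{\kappa_2}\int_0^t\e^{-\delta u}\dd u$. If $\delta\le0$, this lower bound tends to $\infty$. Hence $\Lambda$ must hit $K$, since $\Lambda_t\le K$ for all $t$ on $\{\tauY=\infty\}$ by \eqref{eq:FK-survival-budget}; so $\tauY<\infty$. The case $\delta<0$ and $\delta=0$ can be handled together this way, which also follows from Theorem~\ref{thm:critical-transition}(ii). If $\delta>0$, suppose $\tauY=\infty$. Then $K\ge\Lambda_\infty\ge x^{\kappa_2}/\delta$, which contradicts \eqref{eq:det-extinction-condition}. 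Hence $\tauY<\infty$.

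\emph{Upper bound and the survival case.} Here we need a bound on $\int_0^\infty Y^{\kappa_1}$ valid while $Y>0$. From the $Y$-equation, $\dot Y=-a_2Y^{\theta_2}X^{\kappa_2}-b_{20}Y\le -b_{20}Y$, so $Y_u\le y\e^{-b_{20}u}$. Therefore
\[
\int_0^uY_s^{\kappa_1}\dd s\le \frac{y^{\kappa_1}}{\kappa_1b_{20}}, \qquad E_u\le \exp\Bigl(\frac{\kappa_2a_1y^{\kappa_1}}{\kappa_1b_{20}}\Bigr)
\]
for all $u<\tauY$. Consequently, for $t<\tauY$,
\[
\Lambda_t\le \frac{x^{\kappa_2}}{\delta}\exp\Bigl(\frac{\kappa_2a_1y^{\kappa_1}}{\kappa_1b_{20}}\Bigr)<K
\]
under \eqref{eq:det-survival-condition}. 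The bound is strict and uniform in $t$. If $\tauY<\infty$, then by \eqref{eq:FK-hitting} and continuity we would have $\Lambda_t\uparrow K$ as $t\uparrow\tauY$, which contradicts the uniform strict bound. Hence $\tauY=\infty$. Equivalently, via \eqref{eq:FK-identity}, $Y_t$ stays bounded below by a positive multiple of $\e^{-b_{20}t}$ on every finite interval.

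\emph{Expected obstacle.} The main step needing care is the justification of the comparison $Y_u\le y\e^{-b_{20}u}$ and of passing to the limit $t\uparrow\tauY$. The comparison is a simple ODE argument: $\frac{d}{dt}(\e^{b_{20}t}Y_t)\le0$ while $Y_t>0$. It can also be read off from \eqref{eq:FK-identity}, since the bracket there is at most $y^{1-\theta_2}$. The passage to the limit uses that the bracket in \eqref{eq:FK-identity} tends to $0$ exactly at $\tauY$, with $X$ remaining finite and positive there by part~(i). All remaining steps are explicit integrations.
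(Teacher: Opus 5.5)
Your proposal is correct and follows essentially the same route as the paper: specialize \eqref{eq:critical-X-factor} and \eqref{eq:FK-hitting} to $G_{i,t}=\e^{-b_{i0}t}$, use the lower bound $X_u\ge x\e^{-b_{10}u}$ (your $E_u\ge1$) together with \eqref{eq:FK-survival-budget} for the case $\delta\le0$ and for \eqref{eq:det-extinction-condition}, and use $Y_u\le y\e^{-b_{20}u}$ to bound $E_u$ uniformly for the survival case. The only difference is bookkeeping: you group the two extinction cases under the single lower bound, which the paper also does in effect via \eqref{eq:critical-clock-lower}.
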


\begin{remark}
 When
\(\kappa_2b_{10}-(1-\theta_2)b_{20}>0\), \vthirtyfour{Corollary~\ref{cor:deterministic-zero-prob}} exhibits both
finite-time extinction and nonextinction, depending on the initial state.
The intermediate region not covered by
\eqref{eq:det-survival-condition} or
\eqref{eq:det-extinction-condition} is not classified by this result. 
\end{remark}

 The remainder of the paper is organized as follows.
Section~\ref{sec:preliminaries} develops the deterministic classification,
the lower comparison process, and the relative-jump representation.
Section~\ref{sec:proofs} contains the proofs of the main results.  The appendix
collects the well-posedness and comparison results together with the stable
integral identity. 

\section[\vthirtyfour{Preliminary results}]{Preliminary results}
\label{sec:preliminaries}
\label{sec:structural}

\subsection{The deterministic interaction skeleton}

\vthirtyfour{Let \((x(t),y(t))_{0\le t<T_*}\) be the positive solution of}
\begin{equation}
	\dot x=a_1x^{\theta_1}y^{\kappa_1},
	\qquad
	\dot y=-a_2y^{\theta_2}x^{\kappa_2}.
	\label{eq:interaction-ODE}
\end{equation} 
from \(x_0,y_0>0\), where
\[
 T_*:=\sup\{T>0:x(t)>0\text{ and }y(t)>0\text{ for }0\le t<T\}.
\]
Let $h_0:=H(x_0,y_0)$, where $H$ is given by \eqref{eq:H}. In fact, \(H\) is a first integral of the dynamic system \eqref{eq:interaction-ODE}.
The following proposition gives the complete classification of the
deterministic interaction system. Although it is not used in the proofs of the
stochastic extinction results, it provides a natural view by isolating the
boundary and long-time behavior generated by the interaction alone, against
which the effects of the Brownian and stable branching noises can be assessed.

\begin{proposition} 
\label{prop:ODE-classification}
  One has \(H(x(t),y(t))=h_0\) for \(0\le t<T_*\), and  one of the
following cases holds:
\begin{enumerate}[label=\textup{(\roman*)}]
\item If \(q_2>0\) and  
\begin{itemize}
	\item[(a)] \(q_1\ge0\) or
	\item[(b)]\(q_1<0\) and \(h_0<0\),
\end{itemize}
 then
\[
 y(t)\downarrow0,
 \qquad x(t)\uparrow x_\infty\in(x_0,\infty),
 \qquad \Phi_{q_1}(x_\infty)=h_0.
\]
Moreover, \(T_*<\infty\) if and only if \(\theta_2<1\).
\item If \(q_1<0\) and
\begin{itemize}
	\item[(a)] \(q_2\le0\)  or
	\item[(b)] \(q_2>0\) and \(h_0>0\),
\end{itemize} 
there is a unique \(y_*\in(0,y_0)\) such that
\[
 h_0-\frac{a_1}{a_2}\Phi_{q_2}(y_*)=0,
\]
and
\[
 x(t)\uparrow\infty,
 \qquad y(t)\downarrow y_*>0,
 \qquad T_*<\infty.
\]
\item If \(q_1<0<q_2\) and \(h_0=0\), then \(x(t)\uparrow\infty\) and
\(y(t)\downarrow0\).  {Moreover, \(T_*<\infty\) if and only if}
\[
 \kappa_1\kappa_2-(\theta_1-1)(\theta_2-1)>0.
\]
\item If \(q_1>0>q_2\), then \(x(t)\uparrow\infty\) and
\(y(t)\downarrow0\).  {Moreover, \(T_*<\infty\) if and only if}
\[
 \kappa_1\kappa_2-(\theta_1-1)(\theta_2-1)<0.
\]
\item If \(q_1=0>q_2\), then \(x(t)\uparrow\infty\),
\(y(t)\downarrow0\) and \(T_*<\infty\).
\item {Suppose \(q_2=0\) and \(q_1\ge0\).  If
\(q_1>0\), then \(x(t)\uparrow\infty\), \(y(t)\downarrow0\) and
\(T_*=\infty\).}  If
\(q_1=q_2=0\), then {\(x(t)\uparrow\infty\),
\(y(t)\downarrow0\) and}
\[
 x(t)y(t)^{a_1/a_2}=x_0y_0^{a_1/a_2},
\]
and \(T_*<\infty\) if and only if $\frac{a_1}{a_2}\kappa_2>\kappa_1.$
\end{enumerate}
\end{proposition}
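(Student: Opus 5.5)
The plan is to combine the first integral with monotonicity and to do a case analysis on the ranges of \(\Phi_{q}\). First, by the chain rule and \eqref{eq:interaction-cancel}, \(\frac{\dd}{\dd t}H(x(t),y(t))=\partial_xH\,\dot x+\partial_yH\,\dot y=0\) on \([0,T_*)\), so \(H(x(t),y(t))=h_0\). From \eqref{eq:interaction-ODE}, \(\dot x>0\) and \(\dot y<0\) while both coordinates are positive. Hence \(x(t)\uparrow x_\infty\in(x_0,\infty]\) and \(y(t)\downarrow y_\infty\in[0,y_0)\) as \(t\uparrow T_*\).

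The basic dichotomy is that \(x_\infty<\infty\) and \(y_\infty>0\) cannot hold simultaneously. If \(T_*<\infty\), the vector field is locally Lipschitz near \((x_\infty,y_\infty)\in(0,\infty)^2\), so the solution extends, contradicting maximality. If \(T_*=\infty\), then \(\dot x\ge a_1x_0^{\theta_1}y_\infty^{\kappa_1}>0\), so \(x\to\infty\), a contradiction. Thus \(x_\infty=\infty\) or \(y_\infty=0\). I then use the ranges of \(\Phi_q\): for \(q>0\) it maps \((0,\infty)\) onto \((0,\infty)\); for \(q=0\) onto \(\mathbb R\); for \(q<0\) onto \((-\infty,0)\). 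The identity \(\Phi_{q_1}(x)=h_0-\frac{a_1}{a_2}\Phi_{q_2}(y)\) then decides which limit occurs.

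\emph{Case (i).} Since \(q_2>0\), we have \(\Phi_{q_1}(x(t))<h_0\). If \(q_1\ge0\), this bounds \(x\). If \(q_1<0\) and \(h_0<0\), it keeps \(\Phi_{q_1}(x)\) bounded away from \(0^-\), which again bounds \(x\). Hence \(y_\infty=0\), and letting \(t\uparrow T_*\) gives \(\Phi_{q_1}(x_\infty)=h_0\). Since \(x\in[x_0,x_\infty]\), the equation \(\dot y=-a_2y^{\theta_2}x^{\kappa_2}\) is comparable to \(\dot y=-cy^{\theta_2}\) with constants on both sides. It therefore reaches \(0\) in finite time iff \(\theta_2<1\); when \(\theta_2\ge1\), \(y\) stays positive and \(x\) stays bounded, so \(T_*=\infty\).

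\emph{Case (ii).} Now \(\Phi_{q_1}<0\). If \(y\to0\), then with \(q_2\le0\) we would get \(\Phi_{q_2}(y)\to-\infty\), forcing \(\Phi_{q_1}(x)\to+\infty\), which is impossible. With \(q_2>0\) and \(h_0>0\), we would get \(\Phi_{q_1}(x)\to h_0>0\), also impossible. So \(y_\infty=y_*>0\) and \(x_\infty=\infty\). Since \(\Phi_{q_1}(x)\to0\), this gives \(h_0=\frac{a_1}{a_2}\Phi_{q_2}(y_*)\); uniqueness follows from strict monotonicity of \(\Phi_{q_2}\). Because \(q_1<0\) means \(\theta_1>1+\kappa_2>1\), the bound \(\dot x\ge a_1y_*^{\kappa_1}x^{\theta_1}\) forces blow-up in finite time.

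\emph{Cases (iii)--(vi).} Here the level curve pins \(y\) as an explicit function of \(x\), and the key reduction is to a scalar ODE.
\begin{itemize}
\item Case (iii), \(h_0=0\): the curve is \(y^{q_2}=c\,x^{q_1}\). Both \(x\to\infty\) and \(y\to0\), since neither limit can be a positive finite value on this curve. Substituting into \(\dot x\) gives \(\dot x=c'x^{\theta_1+\kappa_1q_1/q_2}\), which blows up in finite time iff \(\theta_1-1+\kappa_1q_1/q_2>0\). Multiplying by \(q_2>0\) and expanding, \((\theta_1-1)q_2+\kappa_1q_1=\kappa_1\kappa_2-(\theta_1-1)(\theta_2-1)\). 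Since \(x\to\infty\) and \(y\to0\) happen together, \(T_*\) equals the blow-up time.
\item Case (iv): the same computation applies with the sign of \(q_2\) reversed, which flips the inequality.
\item Case (v): the curve \(\log x=h_0-\frac{a_1}{a_2}\Phi_{q_2}(y)\) gives \(x\) growing like \(\exp(c\,y^{q_2})\). I will study \(\dot y\) (or \(\dot x\)) along it to get finite time.
\item Case (vi), \(q_2=0\): with \(q_1>0\), \(x\) is of order \((\log(1/y))^{1/q_1}\), and the resulting growth is too slow for finite time. With \(q_1=q_2=0\), the first integral is \(xy^{a_1/a_2}=\mathrm{const}\), so \(\dot x=c\,x^{\theta_1-\kappa_1a_2/a_1}\). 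Using \(\theta_1=1+\kappa_2\), this is finite-time blow-up iff \(\kappa_2-\kappa_1a_2/a_1>0\), i.e.\ \(\frac{a_1}{a_2}\kappa_2>\kappa_1\).
\end{itemize}

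The main obstacle is the finite-time characterizations in cases (iii)--(vi). One must check that \(T_*\) is governed exactly by the reduced scalar ODE, and in particular rule out that \(y\) hits \(0\) at a different time than \(x\) blows up. I would handle this by writing \(T_*=\int_{x_0}^{\infty}\dd x/\dot x\) along the level curve (respectively \(\int_0^{y_0}\dd y/|\dot y|\)) and testing convergence of this integral using the explicit power or logarithmic asymptotics of the curve.
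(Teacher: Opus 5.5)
Your proposal is correct and follows essentially the paper's route: conservation of $H$, the ranges of $\Phi_q$ along the level curve to identify the endpoint, and a time integral along the curve to decide whether $T_*<\infty$. The paper uses $T_*=a_2^{-1}\int_{y_*}^{y_0}y^{-\theta_2}x(y)^{-\kappa_2}\dd y$ in every case, whereas your comparison ODEs in (i)--(ii) and reduced scalar ODE along the curve in (iii)--(vi) are equivalent changes of variable. In (iv), $h_0\ne0$ in general, so the curve is a power law only asymptotically, which your last paragraph already anticipates.

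Your preliminary dichotomy, that $x_\infty<\infty$ and $y_\infty>0$ cannot hold together, is a welcome addition. The paper tacitly assumes the trajectory runs to the endpoint of its level curve without justifying it, and your dichotomy supplies that step.
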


\begin{proof} 
{ For $x, y > 0$, we have $\dot x=a_1x^{\theta_1}y^{\kappa_1}>0$ and $\dot y=-a_2 y^{\theta_2}x^{\kappa_2}<0$ by \eqref{eq:interaction-ODE}.
Thus the function \(t\mapsto x(t)\) is strictly increasing and  \(t\mapsto y(t)\) is strictly decreasing on $[0, T_*)$.}  By \eqref{eq:interaction-cancel},
\begin{equation}
 {H(x(t),y(t))=}
 \Phi_{q_1}(x(t))+\frac{a_1}{a_2}\Phi_{q_2}(y(t))=h_0.
 \label{eq:levelset}
\end{equation}
Since \(y\) is strictly monotone, the map
$t\mapsto y(t)$ has an inverse on its
range. We may therefore reparametrize the trajectory by $y$, writing
$x(y):=x(t(y))$.  Define $ R(y):=h_0-\frac{a_1}{a_2}\Phi_{q_2}(y).$
Then
\beqlb\label{eq0821a}
 \Phi_{q_1}(x(y))=R(y),
 \qquad
 R'(y)=-\frac{a_1}{a_2}y^{q_2-1}<0,
 \qquad
 R(y_0)=\Phi_{q_1}(x_0).
\eeqlb
The ranges of \(\Phi_{q_1}\) are
\beqnn 
 \Phi_{q_1}((0,\infty))=
 \begin{cases}
  (0,\infty),&q_1>0,\\
  \R,&q_1=0,\\
  (-\infty,0),&q_1<0,
 \end{cases}
\quad
\text{and}
\quad 
 \lim_{y\downarrow0}\Phi_{q_2}(y)=
 \begin{cases}
  0,&q_2>0,\\
  -\infty,&q_2\le0.
 \end{cases}
\eeqnn 
Because \(R\) increases when \(y\) decreases, these range identities give all possible endpoints.

{
\par\smallskip
\noindent\textup{(i)}\quad
If \(q_2>0\), then \(R(y)\uparrow h_0\) as \(y\downarrow0\).  When
\(q_1>0\), by \eqref{eq:levelset}, one sees that
\(h_0>0\) and hence \(h_0\in\Phi_{q_1}((0,\infty))=(0,\infty)\).
When \(q_1=0\), the range of \(\Phi_0 \) is \(\R\).  Finally, when
\(q_1<0\) and \(h_0<0\), one has
\(h_0\in\Phi_{q_1}((0,\infty))=(-\infty,0)\).  Therefore, in all the
cases covered by \textup{(i)}, we have $
x(y)\rightarrow x_\infty:=\Phi_{q_1}^{-1}(h_0)\in(0,\infty).$
Moreover, since \(q_2>0\),
\(h_0=\Phi_{q_1}(x_0)+(a_1/a_2)\Phi_{q_2}(y_0)
>\Phi_{q_1}(x_0)\), so \(x_\infty>x_0\).
 
\par\smallskip
\noindent\textup{(ii)}\quad
Suppose that \(q_1<0\).  If \(q_2\le0\), then
\(R(y)\to+\infty\) as \(y\downarrow0\); if \(q_2>0\) and \(h_0>0\),
then \(R(y)\to h_0>0\).  Since
\(R(y_0)=\Phi_{q_1}(x_0)<0\), there exists a unique \(y_*\in(0,y_0)\) such that
\(R(y_*)=0\) \vthirtyfour{when \(q_2\le0\), or when \(q_2>0\) and \(h_0>0\). Moreover,} as \(y\downarrow y_*\), by \eqref{eq0821a},
\[
 R(y)<0,
 \qquad
 x(y)=\bigl(q_1R(y)\bigr)^{1/q_1}\uparrow\infty.
\]
}
\par\smallskip
\noindent\textup{(iii)}\quad
If \(q_1<0<q_2\) and \(h_0=0\), then \(R(y)<0\) for every \(y>0\) and \(R(y)\uparrow0\) only as \(y\downarrow0\); hence \(x(y)\uparrow\infty\) as \(y\downarrow0\).

\par\smallskip
\noindent\textup{(iv)}\quad
If \(q_1>0>q_2\), then \(R(y)\to+\infty\) and therefore \(x(y)=\bigl(q_1R(y)\bigr)^{1/q_1}\uparrow\infty\) as \(y\downarrow0\).

\par\smallskip
\noindent\textup{(v)}\quad
{The case \(q_1=0>q_2\) follows from the same range calculation, with \(\Phi_0(x)=\log x\).}

\par\smallskip
\noindent\textup{(vi)}\quad
The case \(q_2=0\) and \(q_1\ge0\) follows from the same range calculation, with \(\Phi_0(y)=\log y\). 

It remains to determine whether the endpoint is reached in finite time.  Since
\[
 \frac{\dd t}{\dd y}=-\frac1{a_2 y^{\theta_2}x(y)^{\kappa_2}},
\]
and \(y(t)\) decreases from \(y_0\) to its endpoint
\(y_*\), a change of variables gives the maximal travel time 
\begin{equation}
  T_*
 =-\frac1{a_2}\int_{y_0}^{y_*}y^{-\theta_2}x(y)^{-\kappa_2}\dd y
 =\frac1{a_2}\int_{y_*}^{y_0}y^{-\theta_2}x(y)^{-\kappa_2}\dd y,
 \label{eq:time-integral}
\end{equation}  
where \(y_*=0\) unless the level curve ends at a positive value of \(y\).

In case \textup{(i)}, \(x(y)\to x_\infty\in(0,\infty)\), so the integrand in \eqref{eq:time-integral} is comparable with \(y^{-\theta_2}\).  Hence \(T_*<\infty\) if and only if \(\theta_2<1\).

 In case \textup{(ii)}, \(y_*>0\) and 
\(R'(y_*)=-(a_1/a_2)y_*^{q_2-1}<0\).  Consequently,
\[
 -R(y)=\frac{a_1}{a_2}y_*^{q_2-1}(y-y_*)\bigl(1+o(1)\bigr),
 \qquad y\downarrow y_*.
\]
Since \(q_1<0\), this implies
\beqnn
 x(y)^{-\kappa_2}
  =\bigl(q_1R(y)\bigr)^{-\kappa_2/q_1} =\left(\frac{(-q_1)a_1}{a_2}y_*^{q_2-1}\right)^{\kappa_2/|q_1|}
   (y-y_*)^{\kappa_2/|q_1|}\bigl(1+o(1)\bigr).
 \eeqnn
The factor \(y^{-\theta_2}\) is bounded near \(y_*>0\), while
\(\kappa_2/|q_1|>0\); hence the integrand in
\eqref{eq:time-integral} is integrable at \(y_*\), and \(T_*<\infty\).

For \textup{(iii)}, if \(q_1<0<q_2\) and \(h_0=0\), then \eqref{eq:levelset} gives
\[
 \frac{x(y)^{q_1}}{q_1}+\frac{a_1}{a_2}\frac{y^{q_2}}{q_2}=0 
 \qquad \text{and} \qquad
 x(y)=Cy^{q_2/q_1}
\]
for a constant \(C>0\).  The integrand in \eqref{eq:time-integral}
is a constant multiple of
\(y^{-\theta_2-\kappa_2q_2/q_1}\), and
\[
 q_1\left(\theta_2+\frac{\kappa_2q_2}{q_1}-1\right)
 =\kappa_1\kappa_2-(\theta_1-1)(\theta_2-1).
\]
Since \(\int_0^1y^{-a}\dd y<\infty\) if and only if
\(a<1\), and \(q_1<0\), it follows that
\beqnn
 T_*<\infty
 \quad\Longleftrightarrow\quad
 \kappa_1\kappa_2-(\theta_1-1)(\theta_2-1)>0.
\eeqnn

For \textup{(iv)}, if \(q_1>0>q_2\), then \eqref{eq0821a} gives
{
\[
 R(y)=h_0-\frac{a_1}{a_2q_2}y^{q_2}
 \sim-\frac{a_1}{a_2q_2}y^{q_2},
 \qquad y\downarrow0,
\]
because \(q_2<0\).  Hence \(x(y)\asymp y^{q_2/q_1}\), and the same
power comparison as above applies.  Since \(q_1>0\),
\[
 T_*<\infty
 \quad\Longleftrightarrow\quad
 \kappa_1\kappa_2-(\theta_1-1)(\theta_2-1)<0.
\]}

For \textup{(v)}, if \(q_1=0>q_2\), then
\[
 x(y)=\exp\left(h_0-\frac{a_1}{a_2q_2}y^{q_2}\right).
\]
{In particular, \(x(y)\uparrow\infty\) as
\(y\downarrow0\), and
\[
 y^{-\theta_2}x(y)^{-\kappa_2}
 =y^{-\theta_2}
  \exp\left(-\kappa_2h_0
  +\frac{a_1\kappa_2}{a_2q_2}y^{q_2}\right).
\]
Since \(q_2<0\), the exponential factor tends to zero faster than any
positive power of \(y\) as \(y\downarrow0\).  Thus the last expression is
integrable at zero, and \(T_*<\infty\).}

For \textup{(vi)}, let \(q_2=0\), so \(\theta_2=1+\kappa_1\).
{The case \(q_1<0\) has already been covered by
\textup{(ii)}.  If \(q_1>0\), then, as \(y\downarrow0\),}
\[
 x(y)^{q_1}=q_1\bigl(h_0-(a_1/a_2)\log y\bigr)\asymp|\log y|,
\]
{and therefore, as \(y\downarrow0\),}
\[
 y^{-\theta_2}x(y)^{-\kappa_2}
 \asymp y^{-1-\kappa_1}|\log y|^{-\kappa_2/q_1},
\]
{which is not integrable at zero because
\(\kappa_1>0\).  Hence \(T_*=\infty\).}  If \(q_1=q_2=0\), then
\[
 \log x(y)+\frac{a_1}{a_2}\log y=h_0,
 \qquad
 x(y)=\e^{h_0}y^{-a_1/a_2},
\]
{so \(x(y)\uparrow\infty\) as \(y\downarrow0\).  Moreover,}
\[
 T_*<\infty
 \Longleftrightarrow
 \int_0^\varepsilon y^{-\theta_2+(a_1/a_2)\kappa_2}\dd y<\infty
 \Longleftrightarrow
 \frac{a_1}{a_2}\kappa_2>\theta_2-1=\kappa_1.
\]
This proves all cases.
\end{proof}

\subsection[The lower comparison process]{\vnew{The lower comparison process}}

Now we consider the following process driven by the same noises as $X$:  
\beqlb\label{eq:lower-envelope}
 \Xlow_t= x-b_{10}\int_0^t\Xlow_s^{r_{10}}\dd s
 +\int_0^t\sqrt{2b_{11}\Xlow_s^{r_{11}}}\dd B_1(s)
 +\int_0^t\int_0^\infty\int_0^{b_{12}\Xlow_{s-}^{r_{12}}}
 z\,\widetilde N_1(\dd s,\dd z,\dd u).
\eeqlb

  The process \(\Xlow\) defined by \eqref{eq:lower-envelope} is a
	continuous-state nonlinear branching process studied systematically
	\vthirtyfour{in~\cite{LiYangZhou2019}}; see also \vthirtyfour{Li~\cite{Li2019Polynomial}}.  \vthirtyfour{These works provide a general analysis} of its boundary and long-time behavior.  The following proposition
	refines these results in the present setting by identifying the almost-sure
	logarithmic decay rate of \(\Xlow\).  

\begin{proposition} 
\label{prop:lower-log-rate}
{ Assume $b_{10}>0$ and $r_1 \ge 0$.} Then the solution of \eqref{eq:lower-envelope} is global and strictly positive. Moreover, $\Xlow_t\rightarrow0$ as $t \rightarrow \infty$, and
\beqnn
 \lim_{t\to\infty}\frac1t\log\Xlow_t
 =-b_1\1_{\{r_1=0\}}
 \quad\text{almost surely}.
 \eeqnn
\end{proposition}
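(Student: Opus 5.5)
The natural first move is a logarithmic transform. I would apply It\^o's formula to $\log\Xlow$, up to the hitting times of $\epsilon$ and $R$. Using the stable identity of Lemma~\ref{lem:stable-power} to evaluate the compensator of the jump part, this gives
\[
\log\Xlow_t=\log x-\int_0^t\Psi(\Xlow_s)\dd s+M_t ,
\]
with
\[
\Psi(z)=b_{10}z^{r_{10}-1}+b_{11}z^{r_{11}-2}+b_{12}c_{\alpha_1}z^{r_{12}-\alpha_1},
\]
where $c_{\alpha_1}>0$ comes from $\int_0^\infty[\log(1+v)-v]\mu_1(\dd v)=-c_{\alpha_1}$ after the scaling $z\mapsto \Xlow z$. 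I would fix the normalization of $\mu_1$ so that the effective coefficients add up to $b_1$ as in the definition of $r_1,b_1$. Here $M_t$ is a local martingale. Its Brownian part has bracket $2b_{11}\int\Xlow_s^{r_{11}-2}\dd s$, and its jump part is a compensated integral of $\log(1+z/\Xlow_{s-})$ with rate $b_{12}\Xlow^{r_{12}}$.

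The structural observation I would exploit is that every exponent of $z$ appearing in $\Psi$ and in the bracket of $M$ is at least $r_1\ge0$. For $z\le1$ all these terms are therefore bounded by a constant. Moreover, as $z\downarrow0$ we have $\Psi(z)\to b_1$ if $r_1=0$ and $\Psi(z)\to0$ if $r_1>0$.

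The argument then runs in four steps.

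\textbf{(a) Positivity and nonexplosion.} Positivity follows because the drift of $\log\Xlow$ is bounded below on $\{\Xlow\le1\}$ and the martingale has bounded local characteristics there, so $\log\Xlow$ cannot reach $-\infty$ in finite time. The alternative would be to cite the boundary classification in \cite{LiYangZhou2019}: $r_1\ge0$ is exactly the inaccessibility condition. Nonexplosion follows from the negative drift and $\E\Xlow_t\le x$, since $\Xlow$ is a nonnegative supermartingale.

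\textbf{(b) Convergence to zero.} $\Xlow$ is a nonnegative supermartingale, so it converges a.s. to some limit $\Xlow_\infty$. Because $b_{10}>0$, the identity $\E\int_0^\infty b_{10}\Xlow_s^{r_{10}}\dd s\le x$ forces $\liminf\Xlow=0$, and hence $\Xlow_\infty=0$.

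\textbf{(c) Averaging the log-transform.} Once $\Xlow_t\to0$, the time average $\frac1t\int_0^t\Psi(\Xlow_s)\dd s$ tends to $\lim_{z\downarrow0}\Psi(z)$, which equals $b_1\1_{\{r_1=0\}}$. It remains to show $M_t/t\to0$. Eventually $\Xlow\le1$, and then the bracket of the continuous part grows at most linearly, so the strong law for continuous martingales applies.

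\textbf{(d) The jump martingale.} This is the step I expect to be the main obstacle. The jump integrand $\log(1+z/\Xlow)$ is unbounded, and the jump intensity depends on $\Xlow$, which rules out a naive $L^2$ bound for large jumps. I would split the jumps into $z\le\Xlow_{s-}$ and $z>\Xlow_{s-}$.
\begin{itemize}
\item \textbf{Small jumps.} The predictable quadratic variation is at most $C\int\Xlow^{r_{12}-\alpha_1}\dd s$, which is $O(t)$ once $\Xlow\le1$. The $L^2$ strong law then gives $o(t)$.
\item \textbf{Large jumps.} Here I would use a moment of order $p\in(1,\alpha_1)$. The $p$-th moment of $\log(1+u)$ under $u^{-1-\alpha_1}\dd u$ restricted to $u>1$ is finite, and by scaling it contributes $C\Xlow^{r_{12}-\alpha_1}\le C$ per unit time. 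A Burkholder--Davis--Gundy inequality of order $p$ together with a Borel--Cantelli argument over dyadic time blocks then yields $M_t/t\to0$ a.s.
\end{itemize}
The first time $\Xlow$ enters $(0,1]$ and stays there is a random time, so all of these estimates must be carried out after that time. I would handle this by localizing with stopping times $\sigma_n=\inf\{t\ge n:\Xlow_t>1\}$ and using $\Pp(\sigma_n=\infty)\to1$.

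Combining (c) and (d) gives
\[
\frac1t\log\Xlow_t\to-b_1\1_{\{r_1=0\}}\qquad\text{a.s.}
\]
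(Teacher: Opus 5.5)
Your proposal is correct. Its core matches the paper's proof: It\^o's formula for $\log\Xlow$, the compensator computed from Lemma~\ref{lem:stable-power}, brackets whose exponents are nonnegative because $r_1\ge0$ and are therefore eventually bounded, a martingale strong law, and Ces\`aro averaging of the drift. There are three differences worth noting.

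First, the paper does not prove global existence, positivity and $\Xlow_t\to0$; it cites \cite[Theorem~3.1 and Example~2.18]{LiYangZhou2019}. Your steps (a)--(b) are a valid self-contained replacement. The one point to state explicitly is that $b_{10}>0$ and $r_1\ge0$ force $r_{10}\ge1$. This is what makes $\int_0^\infty\Xlow_s^{r_{10}}\dd s<\infty$, together with the supermartingale limit, give $\Xlow_\infty=0$.

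Second, the obstacle you anticipate in (d) does not exist. Although $\log(1+z/\Xlow_{s-})$ is unbounded, $\int_0^\infty[\log(1+v)]^2\mu_1(\dd v)<\infty$. By scaling, the whole jump martingale therefore has predictable bracket $b_{12}\bigl(\int_0^\infty[\log(1+v)]^2\mu_1(\dd v)\bigr)\int_0^t\Xlow_s^{r_{12}-\alpha_1}\dd s$. This is eventually $O(t)$, exactly like the Brownian bracket. The paper accordingly treats $M^B$ and $M^J$ identically. Since $\int_0^\infty(1+s)^{-2}\dd\langle M\rangle_s<\infty$ pathwise, $\int_0^t(1+s)^{-1}\dd M_s$ converges almost surely, and a Kronecker/Ces\`aro argument gives $M_t/t\to0$. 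This pathwise criterion also makes your $\sigma_n$ localization unnecessary. Your small/large split with an $L^p$ Burkholder--Davis--Gundy bound and dyadic Borel--Cantelli still works, only at greater length. The restriction $p<\alpha_1$ is also superfluous, since every moment of $\log(1+v)$ is finite under $\mu_1$ on $\{v>1\}$.

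Third, no normalization choice is involved in the constant. Lemma~\ref{lem:stable-power} with $q=0$ gives $\int_0^\infty[\log(1+v)-v]\mu_1(\dd v)=-1$ for the given $\mu_1$, so $c_{\alpha_1}=1$. That is exactly why the limit is $-b_1\1_{\{r_1=0\}}$.
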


{
\begin{proof}
	If $b_{11}+b_{12}=0$, the result follows directly from the corresponding
	ordinary differential equation. Hence we assume $b_{11}+b_{12}>0$. The global existence, strict
	positivity, and convergence \(\Xlow_t\to0\) follow from
	\cite[Theorem~3.1 and Example~2.18]{LiYangZhou2019}. It remains to prove the logarithmic limit. 
	
	For the stable L\'evy measure $\mu_1$, we have
	$\int_0^\infty[\log(1+v)]^2\mu_1(\dd v)<\infty,$
	and by Lemma \ref{lem:stable-power} with $q=0$, we have
	\beqnn
		\int_0^\infty
		\bigl[v-\log(1+v)\bigr]\mu_1(\dd v)=1.
		\eeqnn
	Applying It\^o's formula with jumps to \(\log\Xlow_t\), first between
	interior localization times and then removing the localization using
	global strict positivity, yields
	\begin{align}
		\log\Xlow_t
		={}&\log x-
		\int_0^t\left[
		b_{10}\Xlow_s^{r_{10}-1}
		+b_{11}\Xlow_s^{r_{11}-2}
		+b_{12}\Xlow_s^{r_{12}-\alpha_1}
		\right]\dd s
		+M_t^B+M_t^J,
		\label{eq:log-lower}
	\end{align}
	where
	\begin{align*}
		M_t^B
		:={}&
		\int_0^t
		\sqrt{2b_{11}\Xlow_s^{r_{11}-2}}\,\dd B_1(s),\\
		M_t^J
		:={}&
		\int_0^t\int_0^\infty
		\int_0^{b_{12}\Xlow_{s-}^{r_{12}}}
		\log\left(1+\frac{z}{\Xlow_{s-}}\right)
		\widetilde N_1(\dd s,\dd z,\dd u)
	\end{align*}
	are {local martingales.}
Since \(r_1\ge0\), every active exponent among
\(r_{10}-1\), \(r_{11}-2\), and \(r_{12}-\alpha_1\) is
nonnegative.
The predictable quadratic variation of $M^B$ is $\langle M^B\rangle_t=2b_{11}\int_0^t\Xlow_s^{r_{11}-2}\dd s.$ If \(b_{11}=0\), this bracket vanishes; otherwise
\(r_{11}-2\ge0\), and \(\Xlow_t\to0\) shows that its integrand is
eventually bounded.  Consequently, $\int_0^\infty
\frac{\dd\langle M^B\rangle_s}{(1+s)^2} < \infty$ almost surely. Therefore, $\int_0^t\frac{1}{1+s}\,\dd M_s^B$
	converges almost surely as $t\to\infty$. Similarly, for the jump term, the predictable quadratic variation is
	\begin{align*}
		\langle M^J\rangle_t
		={}&b_{12}\int_0^t\Xlow_s^{r_{12}}
		\int_0^\infty
		\left[
		\log\left(1+\frac{z}{\Xlow_s}\right)
		\right]^2
		\mu_1(\dd z)\dd s\\
		={}&b_{12}
		\left(
		\int_0^\infty[\log(1+v)]^2\mu_1(\dd v)
		\right)
		\int_0^t\Xlow_s^{r_{12}-\alpha_1}\dd s.
	\end{align*}
 If \(b_{12}=0\), this bracket vanishes;
	otherwise \(r_{12}-\alpha_1\ge0\), and its integrand is eventually
	bounded.  Thus, in either case, $\int_0^\infty
	\frac{\dd\langle M^J\rangle_s}{(1+s)^2}
	<\infty$ almost surely. Hence $\int_0^t\frac{1}{1+s}\,\dd M_s^J$
	also converges almost surely; see, e.g., \vthirtyfour{Karatzas and Kardaras}~\cite[Remark~7.2]{KaratzasKardaras2007}. 
	
	For either \(L=M^B\) or \(L=M^J\), by
	integration by parts, one obtains
	\beqnn 
	\frac{L_t}{1+t}
	=
	\int_0^t\frac{1}{1+s}\,\dd L_s
	-
	\frac{1}{1+t}
	\int_0^t
	\left(
	\int_0^s\frac{1}{1+u}\,\dd L_u
	\right)\dd s.
	\eeqnn
 Since the inner stochastic integral
	converges, \vthirtyfour{Ces\`aro convergence (see, e.g., Hardy~\cite[Chapter~III]{Hardy1949}) yields \(M_t^B/t\to0\) and \(M_t^J/t\to0\) almost surely as $t \rightarrow \infty$.}
 \vthirtyfour{These limits also follow from the continuous-time
	Kronecker lemma of Elliott}~\cite[Theorem~2.2]{Elliott2001}.
 Finally, for every \(a>0\), the convergence
	\(\Xlow_t\to0\) implies
	\(t^{-1}\int_0^t\Xlow_s^a\dd s\to0\) almost surely.  Hence the
	terms in \eqref{eq:log-lower} with positive effective exponent vanish
	after division by \(t\), whereas those with exponent zero contribute
	the sum of their coefficients, namely \(b_1\).  Together with the
	martingale limits above, this gives 
	\[
	\lim_{t\to\infty}\frac1t\log\Xlow_t
	=
	-b_1\1_{\{r_1=0\}}
	\qquad\text{almost surely}.
	\]
	This completes the proof.
\end{proof}
}

\subsection[Relative-jump representation and geometric L\'evy factors]{\vnew{Relative-jump representation and geometric L\'evy factors}}
This subsection develops the relative-jump representation and the geometric
L\'evy factors used in the multiplicative regimes.

The following proposition establishes the relative-jump representation used in
the multiplicative regimes.  At the level of the joint weak law, each
multiplicative stable-jump term is represented using an independent compensated
Poisson random measure with jump sizes proportional to the current state,
while the corresponding stopped law is preserved.  This weak-law construction,
which does not involve a state-dependent transformation of the original Poisson
random measures, underlies the integrating-factor formula for \(Y\) in
Theorem~\ref{thm:FK-rate} and the factorization of \(X\) in
Theorem~\ref{thm:critical-transition}.

  \begin{proposition}
		\label{prop:relative-jump}
		Let \(I\subseteq\{1,2\}\), and assume that \(r_{i2}=\alpha_i\) whenever
		\(i\in I\) and \(b_{i2}>0\).  Then the solution to
		\rev{\eqref{eq:noimm-system}} can be realized weakly up to
		\(\tau_0\wedge\tau_\infty\) so that all driving Brownian motions and Poisson
		random measures are mutually independent and the stable-jump terms have the
		following representations:
		\begin{enumerate}[label=\textup{(\roman*)}]
			\item  for each \(i\notin I\), the \(i\)-th stable-jump term is represented
			in the original  Poisson-integral form of
			\rev{\eqref{eq:noimm-system}};
			\item for each \(i\in I\), there exists a Poisson random measure \(M_i\) on
			\(\Rplus\times(0,\infty)\), with compensator
			\(b_{i2}\dd t\,\mu_i(\dd v)\), such that the \(i\)-th stable-jump term is
			\[
			\begin{cases}
					X_{t-}\displaystyle\int_0^\infty \vthirtyfour{v}\,\widetilde M_1(\dd t,\dd v),
				& i=1,\\[1ex]
					Y_{t-}\displaystyle\int_0^\infty \vthirtyfour{v}\,\widetilde M_2(\dd t,\dd v),
				& i=2.
			\end{cases}
			\]
		\end{enumerate}
		If the model is source-conservative, the source-stopped law of this
		realization coincides with that of the original formulation.
\end{proposition}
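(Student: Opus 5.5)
The plan is to show that, for \(i\in I\), the stable-jump term in \eqref{eq:noimm-system} with \(r_{i2}=\alpha_i\) has the same local characteristics as a term \(X_{t-}\int_0^\infty v\,\widetilde M_1(\dd t,\dd v)\) (resp. with \(Y_{t-}\)) driven by an independent Poisson random measure. We then invoke pathwise uniqueness for the localized system to identify the laws. The key observation is the scaling property of the stable measure. Under the map \(z\mapsto v=z/w\) we have, for \(w>0\),
\[
 w^{\alpha_i}\mu_i(\dd z)\circ(z\mapsto z/w)^{-1}=\mu_i(\dd v).
\]
In words, the image of \(b_{i2}w^{\alpha_i}\mu_i(\dd z)\) under \(z\mapsto z/w\) is exactly \(b_{i2}\mu_i(\dd v)\). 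This follows from the homogeneity \(\mu_i(\dd z)\propto z^{-1-\alpha_i}\dd z\) by a one-line change of variables. Hence, for a state \(w=X_{t-}\) (or \(Y_{t-}\)), the jump kernel \(\dd t\int_0^{b_{i2}w^{\alpha_i}}\dd u\,\mu_i(\dd z)\,\delta_z\) coincides with the push-forward of \(b_{i2}\dd t\,\mu_i(\dd v)\) under \(v\mapsto wv\).

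The steps are as follows. First, fix \(\epsilon<R\) and work with the system stopped at \(\tau_R^+\wedge\tau_\epsilon^-\), where all coefficients are Lipschitz and bounded on \([\epsilon,R]^2\). On a new probability space, take independent \(B_1,B_2\), independent Poisson random measures \(N_i\) for \(i\notin I\), and \(M_i\) with compensator \(b_{i2}\dd t\,\mu_i(\dd v)\) for \(i\in I\). Solve the modified system, in which the \(i\)-th jump term is replaced by the relative-jump integral. Strong existence and pathwise uniqueness up to \(\tau_R^+\wedge\tau_\epsilon^-\) follow exactly as in Lemma~\ref{prop:local-wellposedness}. The relative-jump coefficient \(w\mapsto wv\) is Lipschitz with \(\int v^2\wedge v\,\mu_i(\dd v)<\infty\), so the standard \(L^1\)/Yamada--Watanabe type arguments for jump SDEs apply.

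Second, use the scaling identity to show that both the original and the modified stopped processes solve the same martingale problem. Their generators agree on \(C_c^2\) functions, namely the operator \(\cL\) in \eqref{eq:noimm-generator}. The agreement comes from computing the compensator of the jump part: in the relative form,
\[
 b_{i2}\int_0^\infty[f(w+wv)-f(w)-wvf'(w)]\mu_i(\dd v)=b_{i2}w^{\alpha_i}\int_0^\infty[f(w+z)-f(w)-zf'(w)]\mu_i(\dd z).
\]
Well-posedness of this stopped martingale problem follows from pathwise uniqueness via the Yamada--Watanabe/Kurtz correspondence between weak uniqueness of SDEs and martingale problems with the same characteristics. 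Hence the laws of the stopped processes coincide. Then let \(R\uparrow\infty\) and \(\epsilon\downarrow0\): since the stopping times are functionals of the path, the laws agree up to \(\tau_0\wedge\tau_\infty\) by consistency of the stopped laws (Kolmogorov extension over the localizing sequence). If the model is source-conservative, \(\tau_\infty>\tau_0\) on \(\{\tau_\infty<\infty\}\) in both realizations, since this event has the same law. Freezing at \(\tau_0\) is a measurable path map, so the source-stopped laws coincide.

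The main obstacle is the identification step: it needs weak uniqueness for a system whose noise structure changed. The cleanest route, which I will try first, is Kurtz's equivalence between weak solutions of SDEs driven by Poisson random measures and solutions of the martingale problem for the common generator. With it, uniqueness for either formulation transfers directly. This requires checking that the modified SDE also enjoys pathwise uniqueness up to the localization times, and that the stopped martingale problem is well-posed, which holds because on \([\epsilon,R]^2\) the coefficients of the original formulation are Lipschitz in the sense required by Lemma~\ref{prop:local-wellposedness}.
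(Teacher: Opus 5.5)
Your proposal is correct and follows essentially the paper's route: you build the relative-jump system on a fresh space with independent noises, use the stable scaling $z=wv$ to show its generator coincides with $\cL$, identify the stopped laws through well-posedness of the stopped interior martingale problem (Lemma~\ref{prop:local-wellposedness}), localize, and finish with the measurable freezing map. The one difference is that you also prove pathwise uniqueness for the modified system and route the identification through Kurtz's SDE/martingale-problem equivalence; this is harmless but unnecessary, since existence of the modified solution together with uniqueness for the martingale problem already suffices.
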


\begin{proof}
	Let \(Z=(X,Y)\) be the solution to
	\eqref{eq:noimm-system}.  On a filtered probability space carrying the
	mutually independent Brownian motions and Poisson random measures \vthirtyfour{specified above}, consider the system
	\(\widehat Z=(\widehat X,\widehat Y)\) obtained by replacing, for each
	\(i\in I\), the \(i\)-th stable-jump term with
	\[
	\int_0^t\int_0^\infty
	\widehat Z_{i,s-}v\,\widetilde M_i(\dd s,\dd v),
	\qquad
	\widehat Z_{1}=\widehat X,\quad \widehat Z_{2}=\widehat Y,
	\]
	and leaving all other terms unchanged.  For \(z,z'>0\),
	\[
	b_{i2}\int_0^1|zv-z'v|^2\mu_i(\dd v)
	=
	b_{i2}|z-z'|^2\int_0^1v^2\mu_i(\dd v)<\infty,
	\]
	and $\int_1^\infty(1+v)\mu_i(\dd v)<\infty.$
	Thus the relative small-jump coefficient is Lipschitz in the corresponding
	\(L^2(\mu_i)\)-norm, while the large-jump component has finite activity and
	finite first moment.  Since the remaining coefficients are unchanged, the
	localization construction used in the proof of
	Lemma~\ref{prop:local-wellposedness} yields a weak c\`adl\`ag solution
	\(\widehat Z\) up to its first boundary or explosion time.
	
We identify the law of \(\widehat Z\) through  \(\cL\).
Let \(F\in C^2((0,\infty)^2)\) be nonnegative and bounded, with all its
first- and second-order partial derivatives bounded.  Write
\(\mathbf z=(z_1,z_2)\) and let \(e_i\) denote the \(i\)-th coordinate vector.
For \(i\in I\), \vthirtyfour{set \(u=z_iv\). The relative-jump term contributes}
\beqnn 
\ar\ar b_{i2}\int_0^\infty
\Bigl[
F(\mathbf z+z_iv e_i)-F(\mathbf z)
-z_iv\,\partial_iF(\mathbf z)
\Bigr]\mu_i(\dd v)\cr 
\ar\ar\qquad =
b_{i2}z_i^{\alpha_i}\int_0^\infty
\Bigl[
F(\mathbf z+u e_i)-F(\mathbf z)
-u\,\partial_iF(\mathbf z)
\Bigr]\mu_i(\dd u) 
\eeqnn
to \(\cL F(\mathbf z)\). 
If \(b_{i2}>0\), then \(r_{i2}=\alpha_i\), and the right-hand side is
precisely the \(i\)-th stable-jump term in \(\cL F\) for the original
formulation.  
All remaining terms in \(\cL F\) agree by construction.  Then \(Z\) and \(\widehat Z\) solve the same local martingale problem
on \((0,\infty)^2\).
	
	For \(n\ge2\), let \(D_n=(n^{-1},n)^2\), and denote by \(\sigma_n\) and
	\(\widehat\sigma_n\) the first exit times of \(Z\) and \(\widehat Z\),
	respectively, from \(D_n\).  By
	Lemma~\ref{prop:local-wellposedness}, the martingale problem for
	\(\cL\) stopped upon exiting \(D_n\) is well posed.  Consequently, the laws
	of \(Z_{\cdot\wedge\sigma_n}\) and
	\(\widehat Z_{\cdot\wedge\widehat\sigma_n}\) coincide for every sufficiently
		large \(n\).  \vthirtyfour{Letting \(n\to\infty\) and applying the localization
		theorem for martingale problems of Ethier and Kurtz}~
		\cite[Chapter~4, Sections~4.4 and~4.6]{EthierKurtz1986}, \vthirtyfour{we conclude that the two}
	laws agree up to \(\tau_0\wedge\tau_\infty\).  The asserted independence and
	jump representations hold by construction.
	
Finally, if the original model is source-conservative, equality of the
interior laws implies that \(\widehat Z\) is source-conservative as well.
Since stopping and freezing a path at its first boundary time is a measurable
transformation, the corresponding source-stopped laws coincide.
\end{proof}

  The following result
 	establishes the strict positivity, logarithmic decay rates, and
 	power-integrability of the associated geometric L\'evy factors.  These
 	properties are used to analyze the weighted exposure and the critical case in
 	Theorem~\ref{thm:FK-rate}, and to prove the effective-rate transition,
 	positive extinction probability, and pathwise decay results in
 	Theorem~\ref{thm:critical-transition}. 

\begin{lemma}
\label{lem:geometric-factor}
For \(i=1,2\), let \(G_i\) be the geometric factor defined in
\eqref{eq:G2}.  Then
\(G_{i,t}>0\) for every \(t\ge0\), and
\begin{equation}
 \log G_{i,t}
 =-b_it+\sqrt{2b_{i1}}B_i(t)
 +\int_0^t\int_0^\infty
 \log(1+v)\,\widetilde M_i(\dd s,\dd v),
 \label{eq:geometric-log}
\end{equation}
where \(b_i:=b_{i0}+b_{i1}+b_{i2}\). Consequently, \(\E|\log G_{i,t}|<\infty\) and
\[
 \lim_{t\to\infty}\frac1t\log G_{i,t}=-b_i
 \qquad\text{almost surely}.
\]
If \(b_i>0\), then, for every \(q>0\),
\(\int_0^\infty G_{i,t}^q\dd t<\infty\) almost surely. 
\end{lemma}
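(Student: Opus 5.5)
The plan is to work directly with the linear SDE \eqref{eq:G2}, which is a Doléans-Dade exponential. First, positivity: the jumps of $G_i$ are $\Delta G_{i,t}=G_{i,t-}v$ with $v>0$, so the relative jump factor is $1+v>1$ and can never send $G_i$ to zero or below. The Doléans-Dade formula therefore gives an explicit strictly positive solution. Concretely, we apply It\^o's formula with jumps to $\log G_{i,t}$, localized at the first time $G_i$ leaves $[\epsilon,R]$, and let $\epsilon\downarrow0$, $R\uparrow\infty$ using the explicit representation. The drift collected is
\[
-b_{i0}-b_{i1}-b_{i2}\int_0^\infty\bigl[v-\log(1+v)\bigr]\mu_i(\dd v),
\]
where the middle term is the Brownian It\^o correction $-\tfrac12\cdot 2b_{i1}$ and the last comes from compensating $\log(1+v)$ against $v$. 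By Lemma~\ref{lem:stable-power} with $q=0$ (as already used in the proof of Proposition~\ref{prop:lower-log-rate}), this integral equals $1$, so the drift is $-b_i$ with $b_i=b_{i0}+b_{i1}+b_{i2}$. This yields \eqref{eq:geometric-log}. The compensated integral of $\log(1+v)$ is well defined because $\int_0^\infty[\log(1+v)]^2\mu_i(\dd v)<\infty$: near $0$ the integrand is of order $v^2$ and $\alpha_i<2$, and at infinity it is $(\log v)^2$ against $v^{-1-\alpha_i}$.

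Second, integrability: $\log G_{i,t}$ is $-b_it$ plus a Brownian term plus a square-integrable Lévy martingale. Hence $\E(\log G_{i,t})^2<\infty$, and $\E|\log G_{i,t}|<\infty$ follows by Cauchy--Schwarz.

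Third, the almost-sure rate: $\log G_{i,t}+b_it$ is a Lévy process with mean zero and finite variance. Either the strong law of large numbers for Lévy processes (applied to the integer-time i.i.d. increments, with the supremum over $[n,n+1]$ controlled via Doob's $L^2$ inequality and Borel--Cantelli), or the Kronecker/Cesàro argument of Proposition~\ref{prop:lower-log-rate}, gives $t^{-1}\log G_{i,t}\to-b_i$ almost surely. I expect this to be the only step requiring care, namely controlling fluctuations between integer times; the Doob maximal bound $\E\sup_{s\le1}|L_{n+s}-L_n|^2\le 4\E L_1^2$ combined with Chebyshev at level $\varepsilon n$ gives a summable tail, so this is routine.

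Finally, if $b_i>0$ and $q>0$, the rate implies that almost surely there is a $T$ with $G_{i,t}^q\le \e^{-qb_it/2}$ for all $t\ge T$. On $[0,T]$ the càdlàg path $G_i$ is bounded, so $\int_0^\infty G_{i,t}^q\dd t<\infty$ almost surely.
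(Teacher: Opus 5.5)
Your proposal is correct and follows essentially the same route as the paper: positivity from the relative jumps $v>0$, It\^o's formula for $\log G_{i}$ with Lemma~\ref{lem:stable-power} at $q=0$ giving the jump drift $-b_{i2}$, square-integrability of the L\'evy martingale part, a strong law for L\'evy processes, and eventual exponential decay for the integrability of $G_i^q$. The only difference is that you sketch a self-contained strong law (integer-time SLLN plus Doob's $L^2$ inequality and Borel--Cantelli), whereas the paper simply cites Sato's Theorem~36.5.
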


\begin{proof}
For each \(i\), equation~\eqref{eq:G2} is a stochastic exponential whose
relative jumps are \(v>0\); hence \(G_{i,t}>0\) for all \(t\ge0\).
\vthirtyfour{It\^o's formula for \(\log G_{i,t}\) gives}
\begin{align*}
 \log G_{i,t}={}&-b_{i0}t-b_{i1}t
 +\sqrt{2b_{i1}}B_i(t)
 +\int_0^t\int_0^\infty
 \log(1+v)\,\widetilde M_i(\dd s,\dd v)\\
 &+b_{i2}t\int_0^\infty
 \bigl[\log(1+v)-v\bigr]\mu_i(\dd v).
\end{align*}
By Lemma \ref{lem:stable-power}, the last integral equals \(-1\), which proves
\eqref{eq:geometric-log}.  Moreover, recall that
\(\int_0^\infty[\log(1+v)]^2\mu_i(\dd v)<\infty\), so the stochastic integral
term in \eqref{eq:geometric-log} is a square-integrable L\'evy
martingale.  In particular, \(\E|\log G_{i,t}|<\infty\). \vthirtyfour{By the strong law
for L\'evy processes of Sato}~\cite[Theorem~36.5]{Sato1999}, \vthirtyfour{we have}
\[
 \frac1t\left[
 \sqrt{2b_{i1}}B_i(t)
 +\int_0^t\int_0^\infty
 \log(1+v)\,\widetilde M_i(\dd s,\dd v)
 \right]\longrightarrow0
 \quad\text{almost surely}.
\]
\vthirtyfour{Dividing \eqref{eq:geometric-log} by \(t\) yields \(t^{-1}\log G_{i,t}\to-b_i\) almost surely as \(t\to\infty\).}
If \(b_i>0\), then, for every \(\varepsilon\in(0,b_i)\), almost surely
there is a finite random \(T_\varepsilon\) such that
\(G_{i,t}\le\exp\{-(b_i-\varepsilon)t\}\) for all sufficiently large \(t\). Consequently, \vthirtyfour{\(\int_0^\infty G_{i,t}^q\dd t<\infty\) almost surely for \(q>0\).} The result follows. 
\end{proof}
 
 \begin{lemma} 
 	\label{lem:clock-unbounded-support}
 	{Assume \(b_{11}+b_{12}>0\) {and $0 < \theta_2 < 1$.} Then, for every
 		\(T,K>0\), we have
 		\[
 		\Pp\left(
 		\int_0^T G_{2,t}^{\theta_2-1}G_{1,t}^{\kappa_2}\dd t>K
 		\right)>0.
 		\]}
 \end{lemma}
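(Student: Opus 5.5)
We need to show that the weighted clock $\int_0^T G_{2,t}^{\theta_2-1}G_{1,t}^{\kappa_2}\,\dd t$ exceeds any level $K$ with positive probability. The exponent $\theta_2-1$ is negative, so $G_2^{\theta_2-1}$ can be bounded below in terms of a supremum of $G_2$. The integral is therefore large as soon as $G_1$ is large on a set of positive measure while $G_2$ stays bounded. Since $G_1$ and $G_2$ are driven by independent noises, the probability of the joint event factors into a piece for $G_1$ and a piece for $G_2$.

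\emph{Bounding $G_2$.} By \eqref{eq:geometric-log}, $\log G_{2,t}$ is a L\'evy process. It is therefore almost surely bounded on $[0,T]$, so there is $A<\infty$ with
\[
p_2:=\Pp\Bigl(\sup_{t\le T}G_{2,t}\le A\Bigr)>0 .
\]
On this event, $G_{2,t}^{\theta_2-1}\ge A^{\theta_2-1}$ for all $t\le T$. It then suffices to show that
\[
\Pp\Bigl(\int_0^T G_{1,t}^{\kappa_2}\,\dd t>K A^{1-\theta_2}\Bigr)>0 ,
\]
because independence of $G_1$ and $G_2$ lets us multiply this probability by $p_2$.

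\emph{Making $G_1$ large.} Write $\log G_{1,t}=-b_1t+L_t$ with $L_t:=\sqrt{2b_{11}}B_1(t)+\int_0^t\int_0^\infty\log(1+v)\,\widetilde M_1(\dd s,\dd v)$. The goal is: for every $M$, with positive probability, $\log G_{1,t}\ge M$ for all $t\in[T/2,T]$. This yields $\int_0^T G_{1,t}^{\kappa_2}\,\dd t\ge (T/2)\,\e^{\kappa_2 M}$, which exceeds $KA^{1-\theta_2}$ once $M$ is large. We split into two cases according to which noise is nondegenerate.

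If $b_{11}>0$, decompose $L=\sqrt{2b_{11}}B_1+J$, where $J$ is the independent jump martingale. Since $J$ is bounded on $[0,T]$ with probability one, there is $C$ with $\Pp(\sup_{t\le T}|J_t|\le C)>0$. By the support theorem for Brownian motion (or simply Gaussian positivity), the event that $\sqrt{2b_{11}}B_1(t)\ge M+b_1T+C$ for all $t\in[T/2,T]$ has positive probability; for example, require $B_1(T/2)$ to be large and the increments after $T/2$ to stay in a bounded window. Independence of $B_1$ and $J$ gives the claim.

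If $b_{11}=0$, then $b_{12}>0$. The measure $\mu_1$ has unbounded support, so $\Pp(\text{some jump }v\text{ of }M_1\text{ in }[0,T/2]\text{ has }\log(1+v)>M+b_1T+2C)>0$. Split $M_1$ into atoms with $v>R$ and atoms with $v\le R$. The compensated small-jump part, together with the compensator of the large jumps (a deterministic drift), is bounded by $C$ on $[0,T]$ with positive probability. The large-jump part, restricted to the event of exactly one jump, occurring in $[0,T/2]$ and of size greater than $\e^{M+b_1T+2C}$, has positive probability as well. These two pieces are independent Poisson restrictions, so the intersection has positive probability.

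\emph{Main obstacle.} The delicate step is the jump-only case: the compensator of the big-jump part contributes a negative drift, and the small-jump fluctuations must be controlled uniformly in time. Handling this requires fixing the truncation level $R$ first, then $C$, then the size threshold for the single big jump. All compensators involved are finite because $\int_1^\infty v\,\mu_1(\dd v)<\infty$. The hypothesis $\theta_2<1$ is used only to give the weight $G_2^{\theta_2-1}$ a lower bound under an upper bound on $G_2$; the additional restriction $\theta_2>0$ plays no role.
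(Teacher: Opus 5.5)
Your proof is correct, but it takes a somewhat different route from the paper's.

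\textbf{Common ground.} Both arguments use the same reduction:
\begin{itemize}
\item the independence of $G_1$ and $G_2$;
\item the almost sure boundedness of the L\'evy process $\log G_2$ on $[0,T]$, which gives $G_2^{\theta_2-1}\ge A^{\theta_2-1}$ on an event of positive probability;
\item the case split $b_{11}>0$ versus $b_{11}=0<b_{12}$, handled by Gaussian positivity in the first case and the unbounded support of $\mu_1$ in the second.
\end{itemize}

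\textbf{Where the routes differ.} The difference lies in how $G_1$ is kept large on a time set of positive length.

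The paper first proves only the weaker fact $\Pp(\sup_{t\le T_0}\log G_{1,t}\ge a)>0$. It gets this either from a Gaussian marginal, or from a single large compound-Poisson jump while the residual stays bounded below. It then combines the first-passage time $\sigma_a^+$, the strong Markov property, and right-continuity of $\log G_1$ at $0$. This keeps $\log G_1\ge a-1$ on a short window $[\sigma_a^+,\sigma_a^++\delta]$.

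You instead build directly an event on which $\log G_{1,t}\ge M$ for all $t\in[T/2,T]$, by splitting the noise into independent pieces:
\begin{itemize}
\item in the Brownian case, a Brownian path that is high at $T/2$ and oscillates little afterwards;
\item in the pure-jump case, one huge jump in $[0,T/2]$, while the truncated compensated part plus the deterministic large-jump compensator stays bounded by $C$.
\end{itemize}
Your ordering of choices ($R$, then $C$, then $M$, then the jump-size threshold) is sound, and all the compensators involved are finite.

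\textbf{What each approach buys.} Your version avoids stopping times and the strong Markov property, and yields a window of fixed length $T/2$. The cost is a more explicit case-by-case construction. The paper's version is more modular: its ``stay near the running level'' step works for any L\'evy process once the supremum is known to exceed every level with positive probability.

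\textbf{On $\theta_2>0$.} Your remark that $\theta_2>0$ is never used is correct; the paper's proof does not use it either. This matters because the lemma is invoked in Theorem~\ref{thm:critical-transition}\textup{(iii)} under $0\le\theta_2<1$, and your argument covers $\theta_2=0$ as well.
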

 
 \begin{proof}
 	{ By Proposition~\ref{prop:relative-jump}, the processes
 		\(\log G_1\) and \(\log G_2\) are independent L\'evy processes.}
 	We first prove that, for every \(a,T_0>0\),
 	\begin{equation}
 		\Pp\left(\sup_{0\le t\le T_0}\log G_{1,t}\ge a\right)>0.
 		\label{eq:H1-high}
 	\end{equation}
 	If \(b_{11}>0\), then \(\log G_{1,t_0}\) has a nondegenerate Gaussian
 	component for every \(t_0>0\), and therefore \eqref{eq:H1-high} follows. 
 	
 	Suppose \(b_{11}=0<b_{12}\). By \eqref{eq:geometric-log}, let $\log G_{1,t}=R_t+J_t,$
 	where
 	\beqnn
	J_t:=\int_0^t\int_{\{\log(1+\vthirtyfour{v})>1\}}\log(1+v)M_1(\dd s,\dd v) 
 	\eeqnn 
 	and
 	\beqnn
 	R_t :=-b_1 t
 	-b_{12}t\int_{\{\log(1+v)>1\}}\log(1+v)\mu_1(\dd v) +\int_0^t\int_{\{\log(1+v)\le1\}}\log(1+v)
 	\widetilde M_1(\dd s,\dd v).
 	\eeqnn 
 	The process \(J\) is a compound Poisson process with positive jumps and is independent of the residual L\'evy process \(R\).  Since \(\inf_{0\le t\le T_0}R_t> -\infty\) almost surely, choose \(K<\infty\) such that
 	\[
 	\Pp\left(\inf_{0\le t\le T_0}R_t\ge-K\right)>0.
 	\]
 	The L\'evy measure of \(J\) has unbounded support, so the event that \(J\) has a jump larger than \(a+K\) before \(T_0\) has positive probability.  On its intersection with the preceding residual event,
 	\(\sup_{t\le T_0}\log G_{1,t}\ge a\).  This proves \eqref{eq:H1-high}.
 	
 	{
 		Fix \(T>0\).  Since \(\log G_{1,0}=0\),  \(\log G_1\) is  
		\vthirtyfour{right-continuous at zero}. One may choose
 		\(\delta\in(0,T/4)\) such that 
 		\[
 		\Pp\left(\inf_{0\le u\le\delta}\log G_{1,u}\ge-1\right)>0.
 		\]
 		For \(a>0\), let $\sigma_a^+:=\inf\{t\ge0:\log G_{1,t}\ge a\}.$
 		By \eqref{eq:H1-high}, \(\Pp(\sigma_a^+\le T/2)>0\).
 		On \(\{\sigma_a^+<\infty\}\), by the strong Markov property, one obtains
 		\beqnn 
 		\Pp\left(
 		\inf_{0\le u\le\delta}
 		\bigl[\log G_{1,\sigma_a^++u}-\log G_{1,\sigma_a^+}\bigr]
 		\ge-1
 		\ \middle|\ \mathcal F_{\sigma_a^+}
 		\right) =
 		\Pp\left(\inf_{0\le u\le\delta}\log G_{1,u}\ge-1\right)>0.
 		\eeqnn 
 		Since \(\sup_{0\le t\le T}\log G_{2,t}<\infty\) almost surely, there exists
 		\(C<\infty\), independently of \(a\), such that
 		\[
 		\Pp\left(\sup_{0\le t\le T}\log G_{2,t}\le C\right)>0.
 		\]
		Recall that \(G_1\) and \(G_2\) are independent. \vthirtyfour{The preceding estimates yield}
 		\beqnn
 		\Pp\Bigl( \sigma_a^+\le T/2,\ 
 		\inf_{0\le u\le\delta}
 		\bigl[\log G_{1,\sigma_a^++u}-\log G_{1,\sigma_a^+}\bigr]\ge-1, 
 		\sup_{0\le t\le T}\log G_{2,t}\le C\Bigr)>0.
 		\eeqnn
 		On this event, for every \(0\le u\le\delta\), we have $\log G_{1,\sigma_a^++u}\ge a-1,$ $\log G_{2,\sigma_a^++u}\le C,$
 		and \(\sigma_a^++\delta<T\).  Consequently,
 		\[
 		\int_0^T G_{2,t}^{\theta_2-1}G_{1,t}^{\kappa_2}\dd t
 		\ge\delta\exp\bigl(\kappa_2(a-1)-(1-\theta_2)C\bigr)
 		\]
		with positive probability.  For any fixed \(K>0\), one may choose \(a\) large enough such that \vthirtyfour{\(\delta\exp\!\bigl(\kappa_2(a-1)-(1-\theta_2)C\bigr)\ge K\)}. The result then follows.}
 \end{proof}

\section[Proofs of the main results]{\vnew{Proofs of the main results}}
\label{sec:proofs}
\label{sec:multiplicative-proofs}

\subsection[Proof of Theorem~\ref{thm:concave-cancellation}]{\vnew{Proof of Theorem~\ref{thm:concave-cancellation}}}

\leavevmode
{
The proof of Theorem~\ref{thm:concave-cancellation} is organized through
the next three lemmas.  Lemma~\ref{lem:concave-Lyapunov} establishes
source-conservativeness, the energy estimate, and the supermartingale
property of \(H\).  Lemma~\ref{lem:conditional-increment} gives a conditional
short-time increment estimate on bounded regions, which
Lemma~\ref{lem:selfreg-vanishing} combines with the energy estimate to show
that both coordinates converge to zero on the event of non-absorption.
}


\begin{lemma} 
\label{lem:concave-Lyapunov}
{
Assume that $0 < q_1, q_2 \le 1$. Then the model \eqref{eq:noimm-system} is source-conservative. Moreover,
\eqref{eq:concave-energy} holds, and
\(\bigl(H(X_t,Y_t)\bigr)_{t\ge0}\) is a nonnegative supermartingale.
}
\end{lemma}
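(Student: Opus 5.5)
The plan is to work directly with the functional $H$ from \eqref{eq:H}. Since $0<q_1,q_2\le1$, each $\Phi_{q_i}$ is nonnegative on $(0,\infty)$, extends continuously to $0$ with value $0$, is concave, and tends to $+\infty$ at infinity. Hence $H\ge0$ and $H\to\infty$ as $x\vee y\to\infty$. Set
\[
m_R:=\min\Bigl\{\Phi_{q_1}(R),\ \tfrac{a_1}{a_2}\Phi_{q_2}(R)\Bigr\}\to\infty .
\]
We also need the local behaviour of $\cL H$.

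\textbf{Step 1: sign of $\cL H$.} By \eqref{eq:interaction-cancel} and Lemma~\ref{lem:stable-power}, the identity \eqref{eq:LH-D} holds, so it suffices to show $D_i^{(q_i)}\ge0$. The drift term $b_{i0}z^{q+r_{i0}-1}$ is nonnegative. The diffusion term has coefficient $(1-q)b_{i1}\ge0$. For the jump term, note that $(q-1)\le0$ and the Gamma factors are positive for $0<q\le1<\alpha_i$, so $j_{\alpha_i}(q)\le0$ and $-b_{i2}j_{\alpha_i}(q)\ge0$. This is just concavity of $\Phi_q$: the compensated jump integrand is $\le0$. Thus $\cL H\le0$ on $(0,\infty)^2$, and $\cL H$ is bounded on each $[\epsilon,R]^2$. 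The main care here is that Lemma~\ref{lem:stable-power} applies for $q\in(0,1]$ with $q<\alpha_i$, including the boundary case $q=1$, where $j_{\alpha}(1)=0$ and the linear part is annihilated by compensation.

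\textbf{Step 2: localized energy identity.} Apply It\^o's formula to $H$, stopped at $\sigma:=t\wedge\tau_R^+\wedge\tau_\epsilon^-$. The local martingale is a true martingale after further localization, and Fatou's lemma as in \eqref{mart} gives
\[
\E_{x,y}H(X_\sigma,Y_\sigma)+\E_{x,y}\int_0^{\sigma}\Bigl[D_1^{(q_1)}(X_s)+\tfrac{a_1}{a_2}D_2^{(q_2)}(Y_s)\Bigr]\dd s\le H(x,y).
\]
If instead one proves the identity with the localizing sequence of the local martingale, Fatou's lemma on the nonnegative $H$ plus monotone convergence on the nonnegative integral yields the inequality.

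\textbf{Step 3: source-conservativeness.} On $\{\tau_R^+\le t\wedge\tau_\epsilon^-\}$, one coordinate is $\ge R$ at $\tau_R^+$, since jumps are upward and the path is right-continuous. Hence $H\ge m_R$ there, and $m_R\,\Pp(\tau_R^+\le t\wedge\tau_\epsilon^-)\le H(x,y)$. Let $\epsilon\downarrow0$; the events increase to $\{\tau_R^+\le t,\ \tau_R^+<\tau_0\}$ up to the boundary case. Then let $R\to\infty$ to obtain $\Pp(\tau_\infty\le t,\ \tau_\infty\le\tau_0)=0$ for every $t$, which is \eqref{eq:source-conservative}. The main obstacle is the bookkeeping in the double limit, specifically the case $\tau_R^+=\tau_0$. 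This is handled by noting that at $\tau_0$ the left limit has a zero coordinate, so reaching level $R$ exactly there is impossible for $R>x\vee y$.

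\textbf{Step 4: energy estimate and supermartingale property.} With conservativeness in hand, let $R\to\infty$ and then $\epsilon\to0$ in Step 2. Then $\sigma\to t\wedge\tau_0$, and $H(X_\sigma,Y_\sigma)\to H(X_{t\wedge\tau_0},Y_{t\wedge\tau_0})$ by continuity of $H$ up to the axes and the existence of left limits at $\tau_0$. Fatou's lemma on $H$ and monotone convergence on the integral give \eqref{eq:concave-energy} for the source-stopped process. Applying the same argument conditionally, via the strong Markov property at time $s$ with the frozen process, gives $\E[H(X_t,Y_t)\mid\mathcal F_s]\le H(X_s,Y_s)$. Since $H\ge0$ and the integrals are finite, $H(X_t,Y_t)$ is a nonnegative supermartingale.
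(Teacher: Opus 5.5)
Your proposal is correct and follows essentially the same route as the paper. Nonnegativity of $D_i^{(q_i)}$ gives $\cL H\le0$ via \eqref{eq:LH-D}; the localized bound \eqref{mart} together with $\inf_{u\vee v\ge R}H(u,v)\to\infty$ yields source-conservativeness; and Fatou's lemma plus monotone convergence give \eqref{eq:concave-energy} and the supermartingale property (the order of the limits $\varepsilon\downarrow0$, $R\to\infty$ does not matter). Your justification for the case $\tau_R^+=\tau_0$ is inaccurate, since a zero coordinate in the left limit at $\tau_0$ does not stop the other coordinate from sitting at level $R$. That case is not needed, however: $\tau_R^+<\tau_\infty$ for every $R$, so $\{\tau_\infty\le t,\ \tau_\infty\le\tau_0\}\subseteq\{\tau_R^+\le t,\ \tau_R^+<\tau_0\}$, and the strict-inequality bound you already obtain suffices.
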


\begin{proof}
We extend \(\Phi_{q_1}\) and \(\Phi_{q_2}\) continuously to zero by
\(\Phi_{q_1}(0)=\Phi_{q_2}(0)=0\).
Fix \(t>0\), \(R>(1\vee x\vee y)\), and
\(0<\varepsilon<(1\wedge x\wedge y)\). The function \(H\) is
\(C^2\) on \([\varepsilon,R]^2\), and all state coefficients are bounded
there. Since \(0<q_i\le1\), one has \(D_i^{(q_i)}\ge0\), \(i=1,2\). By \eqref{eq:LH-D} and \eqref{mart}, one obtains 
\beqlb\label{eq:H-before-R}
\mathbb{E}\left[H(X_{t\wedge\tau_R^+\wedge\tau_\varepsilon^-},
   Y_{t\wedge\tau_R^+\wedge\tau_\varepsilon^-})\right]\ar\le\ar  
  H(x,y)
 - \mathbb{E}\left[ \int_0^{t\wedge\tau_R^+\wedge\tau_\varepsilon^-}
 \left[D_1^{(q_1)}(X_s)+\frac{a_1}{a_2}D_2^{(q_2)}(Y_s)\right] \dd s \right] \cr 
 \ar\le\ar H(x,y).
\eeqlb

\rev{By \eqref{eq:H}, one sees that}
\beqlb\label{infH}
 \inf_{\{u\vee v\ge R\}}H(u,v) \ge
 \min\left\{\frac{R^{q_1}}{q_1},\frac{a_1}{a_2}\frac{R^{q_2}}{q_2}\right\}
 \longrightarrow\infty,\qquad\rev{ R\to\infty }.
\eeqlb

On \(\{\tau_R^+\le t\wedge\tau_\varepsilon^-\}\), we have $H(X_{\tau_R^+},Y_{\tau_R^+})\ge \inf_{\{u\vee v\ge R\}}H(u,v)$.
Thus \eqref{eq:H-before-R} yields
\beqnn
\inf_{ u\vee v\ge R }H(u,v) \Pp_{x,y}(\tau_R^+\le t\wedge\tau_\varepsilon^-)
 \le H(x,y).
\eeqnn
Letting \(\varepsilon\downarrow0\), it follows that
\beqlb\label{eq:upper-exit-before-boundary}
 \Pp_{x,y}(\tau_R^+\le t,\ \tau_R^+\le \tau_0)
 \le \frac{H(x,y)}{ \inf_{ u\vee v\ge R }H(u,v)}.
\eeqlb
\vthirtyfour{Letting \(R\to\infty\) in the preceding inequality and using \eqref{infH}, we obtain} \(\Pp_{x,y}(\tau_\infty\le t,\ \tau_\infty\le\tau_0)=0\).
Since \(t\) is arbitrary, the model is source-conservative.

Letting first \(\varepsilon\downarrow0\) and then \(R\to\infty\) in the inequality \vthirtyfour{in}
\eqref{eq:H-before-R}, \vthirtyfour{source-conservativeness, Fatou's lemma, and
monotone convergence yield} \eqref{eq:concave-energy}. Moreover, let \(0\le s<t\).
On \(\{s<\tau_0\}\), similar to the discussion in \eqref{eq:H-before-R}, we have  
\beqnn
  \E\left[
 H(X_t,Y_t)
 +\int_{s\wedge\tau_0}^{t\wedge\tau_0}
 \left(D_1^{(q_1)}(X_u)+\frac{a_1}{a_2}D_2^{(q_2)}(Y_u)\right)\dd u
 \Bigm|\mathcal F_s\right] \le H(X_s,Y_s).
\eeqnn
Therefore
\(\bigl(H(X_t,Y_t)\bigr)_{t\ge0}\) is a nonnegative
supermartingale.
\end{proof}

\begin{lemma}
\label{lem:conditional-increment}
{
Assume that the model \eqref{eq:noimm-system} is source-conservative.  Fix \(M>1\) and
\(p\in(1,\alpha_1\wedge\alpha_2)\).  There exists a constant \(C_M>0\) such that,
for every almost surely finite stopping time \(T\) and every \(h\in(0,1]\),
\beqnn
 \E\left[
 \sup_{0\le u\le h}
 |X_{(T+u)\wedge\tau_M^+}-X_{T\wedge\tau_M^+}|^p
 \Bigm|\mathcal F_T\right]
 \le C_M(h^p+h^{p/2}+h).
 \eeqnn 
The same estimate holds with \(X\) replaced by \(Y\).
}
\end{lemma}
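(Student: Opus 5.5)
We decompose the stopped increment according to the four terms of \eqref{eq:noimm-system} and estimate each term separately. The key observation is that, before $\tau_M^+$, every state-dependent coefficient is bounded by a constant depending only on $M$. For $s$ between $T$ and $\tau_M^+$ we have $X_{s-},Y_{s-}\le M$. Hence the drift, the diffusion coefficient $2b_{11}X_s^{r_{11}}$ and the jump intensity $b_{12}X_{s-}^{r_{12}}$ are all bounded by some $K_M$. This uses only that the exponents $r_{ij},\theta_1,\kappa_1$ are nonnegative, so small values of $X,Y$ cause no trouble. After the source-stopped freezing at $\tau_0$ the integrands vanish, so the same bounds hold trivially. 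Write
\[
X_{(T+u)\wedge\tau_M^+}-X_{T\wedge\tau_M^+}=A_u+W_u+J_u^{\le1}+J_u^{>1},
\]
where $A$ is the drift part, $W$ the Brownian part, and $J^{\le1},J^{>1}$ are the compensated jump integrals restricted to $z\le1$ and $z>1$. All integrals run over $(T\wedge\tau_M^+,(T+u)\wedge\tau_M^+]$. Since $|a+b+c+d|^p\le4^{p-1}(|a|^p+\dots)$, it suffices to bound each term's conditional sup-moment.

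\textbf{Drift and Brownian terms.} The drift term satisfies $\sup_{u\le h}|A_u|\le K_Mh$, so its contribution is $K_M^ph^p$. For the Brownian term we apply the conditional BDG inequality to the martingale $u\mapsto W_u$ in the shifted filtration $(\mathcal F_{T+u})$. This is legitimate because $B_1(T+\cdot)-B_1(T)$ is again a Brownian motion by the strong Markov property of the driving noise, and $T$ is a.s.\ finite. We obtain
\[
\E\bigl[\sup_{u\le h}|W_u|^p\mid\mathcal F_T\bigr]\le C_p\,\E\bigl[(2K_Mh)^{p/2}\mid\mathcal F_T\bigr],
\]
which gives the $h^{p/2}$ term.

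\textbf{Jump terms.} For the small jumps we use BDG with $p<2$, then Jensen's inequality (concavity of $x\mapsto x^{p/2}$) applied to the predictable bracket:
\[
\E\bigl[\sup|J^{\le1}_u|^p\mid\mathcal F_T\bigr]\le C_p\bigl(K_Mh\textstyle\int_0^1z^2\mu_1(\dd z)\bigr)^{p/2},
\]
again of order $h^{p/2}$. Alternatively, one may use the $p$-th moment BDG/Novikov bound $\E\int\!\!\int|z|^p$, which gives order $h$. The large jumps cannot be handled by a second-moment bound, because $\mu_1$ has no second moment at infinity. Here we use the Novikov-type inequality for compensated Poisson integrals with $1\le p\le 2$:
\[
\E\Bigl[\sup_{u\le h}|J^{>1}_u|^p\Bigm|\mathcal F_T\Bigr]\le C_p\,\E\Bigl[\int\!\!\int\!\!\int_{z>1}z^p\,\dd s\,\mu_1(\dd z)\,\dd u\Bigm|\mathcal F_T\Bigr]\le C_pK_Mh\int_1^\infty z^p\mu_1(\dd z).
\]
The last integral is finite precisely because $p<\alpha_1$. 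This produces the $h$ term.

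Collecting the four bounds gives $C_M(h^p+h^{p/2}+h)$. The $Y$ case is identical, using $|a_2Y^{\theta_2}X^{\kappa_2}|\le K_M$ and $p<\alpha_2$.

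\textbf{Main obstacle.} The main delicate point is justifying the conditional BDG and Novikov inequalities at a random starting time $T$, with stopping at $\tau_M^+$ and freezing at $\tau_0$. We plan to handle this by working with the time-shifted noises $B_1(T+\cdot)-B_1(T)$ and $N_1$ shifted by $T$. By the strong Markov property and independence of increments, these are again a Brownian motion and a Poisson random measure, independent of $\mathcal F_T$. The stochastic integrals are then genuine martingale integrals in the shifted filtration with bounded integrands. We apply the unconditional inequalities on each set $\{T\in\cdot\}\cap F$ with $F\in\mathcal F_T$, or equivalently use regular conditional probabilities. Finiteness of the relevant moments is guaranteed by the boundedness of the integrands, so no further localization is needed.
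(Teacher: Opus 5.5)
Your proof is correct and follows the paper's argument essentially step for step. You use the same split into drift, Brownian, small-jump ($z\le1$) and large-jump ($z>1$) parts, with the same tools: boundedness of the coefficients below $\tau_M^+$, BDG, an $L^2$ estimate plus Jensen, and a $p$-th moment bound based on $\int_1^\infty z^p\mu_1(\dd z)<\infty$ (the paper derives that bound from BDG and $(\sum_j a_j)^{p/2}\le\sum_j a_j^{p/2}$, which is exactly your Novikov-type inequality). One caveat: drop your aside that the small jumps could alternatively be handled by the $p$-th moment bound, since $\int_0^1 z^p\mu_1(\dd z)=\infty$ for $p<\alpha_1$, and this divergence is precisely why the $L^2$ route is needed for $z\le1$.
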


\begin{proof}
\rev{\vthirtyfour{The estimate for \(Y\) is analogous, so we give only the proof for \(X\).} For \(0\le u\le h\), we have}
{
\begin{align*}
 X_{(T+u)\wedge\tau_M^+}-X_{T\wedge\tau_M^+}
 =V(u)+M_0(u) +M_{\mathrm{1}}(u)+M_{\mathrm{2}}(u),
\end{align*}
where
\begin{align*}
 V(u):={}&\int_{T\wedge\tau_M^+\wedge\tau_0}^{(T+u)\wedge\tau_M^+\wedge\tau_0}
 \bigl(a_1X_s^{\theta_1}Y_s^{\kappa_1}-b_{10}X_s^{r_{10}}\bigr)\dd s,\\
 M_0(u):={}&\int_{T\wedge\tau_M^+\wedge\tau_0}^{(T+u)\wedge\tau_M^+\wedge\tau_0}
 \sqrt{2b_{11}X_s^{r_{11}}}\dd B_1(s),\\
 M_{\mathrm{1}}(u):={}&
 \int_{T\wedge\tau_M^+\wedge\tau_0}^{(T+u)\wedge\tau_M^+\wedge\tau_0}
 \int_0^1\int_0^{b_{12}X_{s-}^{r_{12}}}
 z\,\widetilde N_1(\dd s,\dd z,\dd v)
\end{align*}
and
\beqnn
 M_{\mathrm{2}}(u):= 
\int_{T\wedge\tau_M^+\wedge\tau_0}^{(T+u)\wedge\tau_M^+\wedge\tau_0}
\int_1^\infty\int_0^{b_{12}X_{s-}^{r_{12}}}
z\,\widetilde N_1(\dd s,\dd z,\dd v).
\eeqnn 
By H\"older's inequality and \vthirtyfour{the Burkholder--Davis--Gundy inequality, there exist constants} \(C_1,C_p>0\) such that 
\beqnn
 \E\left[\sup_{0\le u\le h}|V(u)|^p\Bigm|\mathcal F_T\right]
  \le h^{p-1}\E\left[
 \int_{T\wedge\tau_M^+\wedge\tau_0}^{(T+h)\wedge\tau_M^+\wedge\tau_0}
 \bigl|a_1X_s^{\theta_1}Y_s^{\kappa_1}-b_{10}X_s^{r_{10}}\bigr|^p\dd s
 \Bigm|\mathcal F_T\right] 
 \le C_1h^p 
\eeqnn
and
\beqnn
 \E\left[\sup_{0\le u\le h}|M_0(u)|^p\Bigm|\mathcal F_T\right]
  \le C_p\E\left[
 \left(
 \int_{T\wedge\tau_M^+\wedge\tau_0}^{(T+h)\wedge\tau_M^+\wedge\tau_0}
 2b_{11}X_s^{r_{11}}\dd s
 \right)^{p/2}\Bigm|\mathcal F_T\right] 
 \le C_1h^{p/2}.
\eeqnn
}
Moreover, \vthirtyfour{for the small-jump martingale \(M_{\mathrm{1}}(u)\), its conditional \(L^2\) estimate and Jensen's inequality give}
\beqnn
 \E\left[\sup_{0\le u\le h}|M_{\mathrm{1}}(u)|^p\mid\mathcal F_T\right] \ar\le\ar 
 \left(\E\left[\sup_{0\le u\le h}|M_{\mathrm{1}}(u)|^2\mid\mathcal F_T\right]\right)^{p/2}\cr 
 \ar\le\ar C_M\left(h\int_0^1z^2\mu_1(\dd z)\right)^{p/2}\le C_2h^{p/2}
\eeqnn
for some constants $C_M, C_2 > 0$.
For the large-jump martingale $M_{\mathrm{2}}(u)$, we have $\int_1^\infty z^p \mu_1(\dd z) < \infty$  since $p \in (1, \alpha_1\wedge\alpha_2)$. By \vthirtyfour{the Burkholder--Davis--Gundy
inequality} and $ (\sum_j a_j)^{p/2}
\le \sum_j a_j^{p/2}$ for $a_j \ge 0$, one obtains
\beqnn
 \E\left[
	\sup_{0\le u\le h}|M_{\mathrm{2}}(u)|^p
	\Bigm|\mathcal F_T\right]
	\ar\le\ar  C_3\E\left[
	\left(
	\int_{T\wedge\tau_M^+\wedge\tau_0}^{(T+h)\wedge\tau_M^+\wedge\tau_0}
	\int_1^\infty\int_0^{b_{12}X_{s-}^{r_{12}}}
	z^2N_1(\dd s,\dd z,\dd v)
	\right)^{p/2}
	\Bigm|\mathcal F_T\right]\cr
	\ar\le\ar C_3\E\left[
	\int_{T\wedge\tau_M^+\wedge\tau_0}^{(T+h)\wedge\tau_M^+\wedge\tau_0}
	\int_1^\infty\int_0^{b_{12}X_{s-}^{r_{12}}}
	z^pN_1(\dd s,\dd z,\dd v)
	\Bigm|\mathcal F_T\right]\cr
	\ar\le\ar C_3b_{12}M^{r_{12}}h
	\int_1^\infty z^p\mu_1(\dd z)
\eeqnn
for some constant $C_3 > 0$. The result follows. 
\end{proof}

\begin{lemma} 
\label{lem:selfreg-vanishing}
{
Assume that $0 < q_1, q_2 \le 1$. If \(b_{10},b_{20}>0\), then $(X_t,Y_t)\rightarrow(0,0)$ on $\{\tau_0=\infty\}.$
}
\end{lemma}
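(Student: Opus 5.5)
Since $b_{10},b_{20}>0$, the functions $D_i^{(q_i)}$ contain the term $b_{i0}z^{q_i+r_{i0}-1}$, and for $0<q_i\le1$ the remaining terms are nonnegative. The reason is that $(1-q_i)\ge0$ and $j_{\alpha_i}(q_i)\le0$ because $q_i-1\le0$. Hence
\[
D_1^{(q_1)}(x)\ge b_{10}x^{q_1+r_{10}-1}=:g_1(x),
\qquad
D_2^{(q_2)}(y)\ge b_{20}y^{q_2+r_{20}-1}=:g_2(y),
\]
and $g_i$ is a power function that is strictly positive on $(0,\infty)$. Letting $t\to\infty$ in \eqref{eq:concave-energy} (Lemma~\ref{lem:concave-Lyapunov}) gives
\[
\E_{x,y}\int_0^{\tau_0}\bigl[g_1(X_s)+g_2(Y_s)\bigr]\dd s\le C H(x,y)<\infty .
\]
Hence $\int_0^\infty [g_1(X_s)+g_2(Y_s)]\dd s<\infty$ almost surely on $\{\tau_0=\infty\}$. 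Moreover, $H(X_t,Y_t)$ is a nonnegative supermartingale, so it converges almost surely. In particular $\sup_t (X_t\vee Y_t)<\infty$ almost surely, by \eqref{infH}. So on $\{\tau_0=\infty\}$, for almost every $\omega$ there is $M$ with $\tau_M^+=\infty$.

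It remains to upgrade finite occupation to convergence. Fix $\eta>0$; it suffices to show that on $\{\tau_0=\infty,\ \tau_M^+=\infty\}$ the process $X$ makes only finitely many excursions from below $\eta/2$ to above $\eta$, and that $X_t<\eta/2$ for some arbitrarily large $t$. The second fact follows from finite occupation: $\int g_1(X_s)\dd s<\infty$ and $g_1\ge c_\eta>0$ on $[\eta/2,M]$ force $\mathrm{Leb}\{s:X_s\ge\eta/2\}<\infty$. The same argument applies to $Y$. For the excursions, define stopping times $T_k$ as the successive upcrossing times: $T_k$ is the first time after the previous return below $\eta/2$ at which $X\ge\eta$.

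The main obstacle is that the upcrossing may occur by a jump, so $X_{T_k}$ can exceed $M$. Working on $\{\tau_M^+=\infty\}$ for fixed $M$ handles this. I would apply Lemma~\ref{lem:conditional-increment} at $T_k$ with small $h$. By Chebyshev's inequality, conditionally on $\mathcal F_{T_k}$, with probability at least $1/2$ we have $\sup_{u\le h}|X_{(T_k+u)\wedge\tau_M^+}-X_{T_k\wedge\tau_M^+}|\le\eta/4$, provided $h$ is chosen so that $C_M(h^p+h^{p/2}+h)(4/\eta)^p\le1/2$. On that event, $X\ge 3\eta/4$ on $[T_k,T_k+h]$, which contributes occupation at least $h\,c_\eta$ to the integral on each such interval.

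Now consider $A_k=\{T_k<\infty,\ \tau_M^+=\infty,\ X\ge3\eta/4\text{ on }[T_k,T_k+h]\}$. We have $\sum_k\Pp(A_k\mid\mathcal F_{T_k})\ge\frac12\#\{k:T_k<\tau_M^+\}$, with care that $\tau_M^+$ is handled through the stopped process. By the conditional Borel--Cantelli lemma (Lévy's extension), infinitely many upcrossings would force infinitely many of the $A_k$ to occur. Each $A_k$ adds at least $hc_\eta$ to the occupation integral, which would make it infinite, a contradiction. Therefore there are only finitely many upcrossings, so $\limsup_{t\to\infty} X_t\le\eta$ on $\{\tau_0=\infty\}$. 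Taking a union over rational $\eta$ and $M$ gives $X_t\to0$, and the identical argument for $Y$ completes the proof.
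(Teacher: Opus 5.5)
Your proposal is correct and follows essentially the same route as the paper: path boundedness from the nonnegative supermartingale $H(X_t,Y_t)$, a finite occupation integral from \eqref{eq:concave-energy}, and Lemma~\ref{lem:conditional-increment} combined with the conditional Borel--Cantelli lemma along a sequence of stopping times. The one difference is that you use upcrossing times from below $\eta/2$ to above $\eta$, which is why you also need that $X$ returns below $\eta/2$, whereas the paper uses $h$-spaced entrance times into $\{X\ge2\delta\}$; for adaptedness, define $A_k$ as the paper defines $E_k$, namely through $\inf_{0\le u\le h}X_{(T_k+u)\wedge\tau_M^+}\ge3\eta/4$ and without the event $\{\tau_M^+=\infty\}$, and then $A_k\in\mathcal F_{T_{k+1}}$ because on $A_k$ the next upcrossing cannot occur before $(T_k+h)\wedge\tau_M^+$.
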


\begin{proof}
By Lemma~\ref{lem:concave-Lyapunov},
\(\bigl(H(X_t,Y_t)\bigr)_{t\ge0}\) is a nonnegative supermartingale.
The supermartingale convergence theorem therefore shows that
\rev{\(H(X_t,Y_t)\) converges almost surely to a finite limit as
\(t\to\infty\); see, e.g.,
\vthirtyfour{Revuz and Yor}~\cite[Chapter~II, Corollary~2.11]{RevuzYor1999}.}
 Since $t \mapsto H(X_t,Y_t)$ has c\`adl\`ag paths and every c\`adl\`ag function is
bounded on compact time intervals, it follows that
$\sup_{t\ge0}H(X_t,Y_t)<\infty$ almost surely. \vthirtyfour{Moreover, since} $X_t^{q_1}\le q_1H(X_t,Y_t)$ and $Y_t^{q_2}\le\frac{a_2 q_2}{a_1}H(X_t,Y_t)$, \vthirtyfour{it follows that}
\begin{equation}
 \sup_{t\ge0}(X_t\vee Y_t)<\infty
 \qquad\rev{\text{almost surely}.}
 \label{eq:path-bounded}
\end{equation}
Furthermore, \vthirtyfour{letting \(t\to\infty\) in \eqref{eq:concave-energy} and using \(D_i^{(q_i)}(z)\ge b_{i0}z^{q_i+r_{i0}-1}\) together with monotone convergence, we obtain}
\begin{equation}
	\int_0^{\tau_0}
	\left(X_s^{q_1+r_{10}-1}+Y_s^{q_2+r_{20}-1}\right)\dd s<\infty
	\qquad\text{almost surely}.
	\label{eq:selfreg-occupation}
\end{equation}
Fix \(M>1\), \(0<\delta<M/2\), and
\(p\in(1,\alpha_1\wedge\alpha_2)\).
 Let $T$ be any almost surely finite stopping time. On $\{X_T\ge2\delta,\ T\le \tau_M^+\}$, we have $X_{T\wedge\tau_{M}^+} = X_T \ge 2\delta$. If $\inf_{0 \le u \le h}X_{(T+u)\wedge \tau_M^+} < \delta$, then there exists a $u \in [0, h]$ such that $X_{(T+u)\wedge \tau_M^+} < \delta$. It follows that $|X_{(T+u)\wedge\tau_M^+} - X_{T\wedge\tau_M^+}| = X_T - X_{(T+u)\wedge\tau_M^+} > \delta.$
 Hence,
 \beqnn
\ar\ar \left\{\inf_{0 \le u \le h}X_{(T+u)\wedge\tau_M^+} < \delta, T \le \tau_M^+, X_T \ge 2\delta\right\}\cr 
 \ar\ar\qquad \subseteq \left\{\sup_{0 \le u \le h}|X_{(T+u)\wedge\tau_M^+} - X_{T\wedge\tau_M^+}| \ge \delta, T \le \tau_M^+, X_T \ge 2\delta\right\}.
 \eeqnn 
Notice that $\{T \le \tau_M^+, X_T \ge 2\delta\} \in \mathcal{F}_T$. Then, by Markov's inequality and \vthirtyfour{choosing \(h>0\) sufficiently small} in Lemma \ref{lem:conditional-increment}, one can obtain that
\beqnn
\ar\ar \Pp\left(
\inf_{0\le u\le h}X_{(T+u)\wedge\tau_M^+}<\delta, T\le \tau_M^+,\ X_T\ge2\delta
\Bigm|\mathcal F_T\right)\cr
\ar\ar\qquad\le  \Pp\left( 
\sup_{0\le u\le h}|X_{(T+u)\wedge\tau_M^+}-X_{T\wedge\tau_M^+}|\ge\delta, T\le \tau_M^+,\ X_T\ge2\delta
\Bigm|\mathcal F_T\right)\cr
\ar\ar\qquad= 1_{\{ T<\tau_M^+,\ X_T\ge2\delta\}}\Pp\left( 
\sup_{0\le u\le h}|X_{(T+u)\wedge\tau_M^+}-X_{T\wedge\tau_M^+}|\ge\delta\Bigm|\mathcal F_T\right)\cr
 \ar\ar\qquad\le 1_{\{ T<\tau_M^+,\ X_T\ge2\delta\}}\delta^{-p}\E\left[
\sup_{0\le u\le h}|X_{(T+u)\wedge\tau_M^+}-X_T|^p
\Bigm|\mathcal F_T\right]\le\frac12 1_{\{ T<\tau_M^+,\ X_T\ge2\delta\}}.
\eeqnn 
Equivalently,
\beqlb\label{eq:stay-positive}
 \Pp\left(
 \inf_{0\le u\le h}X_{(T+u)\wedge\tau_M^+}\ge\delta
 \Bigm|\mathcal F_T\right)\ge\frac12 \qquad \text{on}\ \{T \le \tau_M^+, X_T \ge 2\delta\}.
\eeqlb
Suppose \(\limsup_{t\to\infty}X_t>2\delta\). 
 Define disjointly spaced stopping times
\[
 T_1:=\inf\{t\ge0:X_t\ge2\delta\},
 \qquad
 T_{k+1}:=\inf\{t\ge T_k+h:X_t\ge2\delta\}.
\]
Then every \(T_k\) is finite on $\{\tau_0=\infty,\ \tau_M^+=\infty,\
\limsup_{t\to\infty}X_t>2\delta\}$.  Let
\beqnn
E_k:=\left\{T_k<\infty,\
 \inf_{0\le u\le h}X_{(T_k+u)\wedge\tau_M^+}\ge\delta\right\}.
\eeqnn
Set \(\mathcal G_0:=\mathcal F_{T_1}\) and
\(\mathcal G_k:=\mathcal F_{T_{k+1}}\), \(k\ge1\).  Since
\(T_{k+1}\ge T_k+h\), one has
\(E_k\in\mathcal F_{T_k+h}\subseteq\mathcal G_k\).  Thus \((E_k)\) is adapted to the
discrete filtration \((\mathcal G_k)\).  On
\(\{\tau_0=\infty,\ \tau_M^+=\infty,\
\limsup_{t\to\infty}X_t>2\delta\}\), one has
\(T_k<\infty\), \(T_k<\tau_M^+\), and \(X_{T_k}\ge2\delta\) for every
\(k\).  Thus, by \eqref{eq:stay-positive},
\[
 \sum_{k=1}^\infty
 \Pp(E_k\mid\mathcal G_{k-1})
 =\sum_{k=1}^\infty\Pp(E_k\mid\mathcal F_{T_k})=\infty
\]
on
\(\{\tau_0=\infty,\ \tau_M^+=\infty\}\).
The conditional Borel--Cantelli lemma therefore implies that infinitely many
\(E_k\) occur almost surely on this event.  Since the intervals
\([T_k,T_k+h]\) have disjoint interiors, on \(\{\tau_0=\infty,\ \tau_M^+=\infty\}\),
\beqnn
 \int_0^\infty X_s^{q_1+r_{10}-1}\dd s
 \ge\sum_{k:E_k\text{ occurs}}
 \int_{T_k}^{T_k+h}X_s^{q_1+r_{10}-1}\dd s\ge\sum_{k:E_k\text{ occurs}}
 h\min_{\delta\le u\le M}u^{q_1+r_{10}-1}=\infty,
\eeqnn
contradicting \eqref{eq:selfreg-occupation}.  Hence
\[
 \limsup_{t\to\infty}X_t\le2\delta
 \quad\text{on }\{\tau_0=\infty,\ \tau_M^+=\infty\}.
\]
\vthirtyfour{Letting \(\delta\downarrow0\), we obtain}
\(X_t\to0\) on \(\{\tau_0=\infty,\ \tau_M^+=\infty\}\). Repeating the preceding argument with \(Y\) in place of \(X\), using the
\(Y\)-version of \eqref{eq:stay-positive} and the finiteness of
\(\int_0^{\tau_0}Y_s^{q_2+r_{20}-1}\dd s\) from
\eqref{eq:selfreg-occupation}, yields \(Y_t\to0\) on
\(\{\tau_0=\infty,\ \tau_M^+=\infty\}\).  The result follows \vthirtyfour{by letting \(M\to\infty\) and using \eqref{eq:path-bounded}}.
\end{proof}

\begin{proof}[Proof of Theorem~\ref{thm:concave-cancellation}]
{
Lemma~\ref{lem:concave-Lyapunov} proves source-conservativeness,
\eqref{eq:concave-energy}, and the supermartingale assertion.  Assume
additionally that \(b_{10},b_{20}>0\).  On \(\{\tau_0<\infty\}\), source
stopping gives
\((X_t,Y_t)=(X_{\tau_0},Y_{\tau_0})\) for \(t\ge\tau_0\), and at least
one coordinate of this limit is zero.  On \(\{\tau_0=\infty\}\), both
coordinates converge to zero by Lemma~\ref{lem:selfreg-vanishing}. This proves all assertions.
}
\end{proof}

\subsection[Proof of Theorem~\ref{thm:dissipativity}]{\vnew{Proof of Theorem~\ref{thm:dissipativity}}}

\begin{proof}[Proof of Theorem~\ref{thm:dissipativity}]
{
For \(i=1,2\), \eqref{eq:dissipative-b} gives
\(q_i+r_{i0}-1>0\).  Recall \(D_i^{(q_i)}\) is given by \eqref{eq:Dip}.
By Lemma \ref{lem:stable-power}, 
{
\[
 -D_i^{(q_i)}(z)
 =-b_{i0}z^{q_i+r_{i0}-1}
 +(q_i-1)b_{i1}z^{q_i+r_{i1}-2}
 +b_{i2}j_{\alpha_i}(q_i)z^{q_i+r_{i2}-\alpha_i}.
\]}
If \(0<q_i\le1\), then
\(j_{\alpha_i}(q_i)\le0\) by \eqref{eq:j-alpha}, and hence $D_i^{(q_i)}(z)\ge b_{i0}z^{q_i+r_{i0}-1}.$
If \(1<q_i<\alpha_i\), conditions
\eqref{eq:dissipative-b}--\eqref{eq:dissipative-c} show that each active
correction exponent $q_i + r_{i1} - 2$ or $q_i + r_{i2} - \alpha_i$ is nonnegative and strictly smaller than
\(q_i+r_{i0}-1\).  Then there exists
a constant \(C_i>0\) such that
\[
 D_i^{(q_i)}(z)
 \ge \frac{b_{i0}}2z^{q_i+r_{i0}-1}-C_i,
 \qquad z > 0.
\]
\vthirtyfour{Equation~\eqref{eq:LH-D} gives}
\beqlb\label{eq0821b}
 \cL H(x,y)
 =-D_1^{(q_1)}(x)-\frac{a_1}{a_2}D_2^{(q_2)}(y)
 \le C_1+\frac{a_1}{a_2}C_2
 -\frac{b_{10}}2x^{q_1+r_{10}-1}
 -\frac{a_1b_{20}}{2a_2}y^{q_2+r_{20}-1}.
\eeqlb
 This proves
the first assertion with $C= C_1+\frac{a_1}{a_2}C_2$ and $c = \frac{b_{10}}2 \wedge \frac{a_1b_{20}}{2a_2}$.

Fix \(R>1\vee x\vee y\) and
\(0<\varepsilon<1\wedge x\wedge y\). \vthirtyfour{Applying \eqref{mart} to \(H\) and using \eqref{eq0821b}, we obtain}
\beqlb\label{eq:dissipative-localized}
  \E H(X_{t\wedge\tau_R^+\wedge\tau_\varepsilon^-},
       Y_{t\wedge\tau_R^+\wedge\tau_\varepsilon^-}) +c\E\int_0^{t\wedge\tau_R^+\wedge\tau_\varepsilon^-}
 \left(X_s^{q_1+r_{10}-1}+Y_s^{q_2+r_{20}-1}\right)\dd s
 \le H(x,y)+Ct. 
\eeqlb
Similar to \eqref{eq:upper-exit-before-boundary}, one sees that
\beqnn
\Pp_{x,y}(\tau_R^+\le t,\ \tau_R^+\le \tau_0)
\le \frac{H(x,y)+Ct}{\inf_{u\vee v \ge R}H(u, v)}.
\eeqnn 
 \vthirtyfour{Letting \(R\to\infty\) and using \eqref{infH}, we obtain}
\[
 \Pp_{x,y}(\tau_\infty\le t,\ \tau_\infty\le\tau_0)=0.
\]
Since \(t\) is arbitrary, the model \eqref{eq:noimm-system} is source-conservative.

Finally, taking \(t=T\), \(\varepsilon\downarrow0\) and
\(R\to\infty\) in \eqref{eq:dissipative-localized}, and applying Fatou's lemma, we have 
\begin{align*}
 &c\E\int_0^{T\wedge\tau_0}
 \left(X_s^{q_1+r_{10}-1}+Y_s^{q_2+r_{20}-1}\right)\dd s
 \le H(x,y)+CT.
\end{align*}
Then the last assertion is obtained by dividing by $cT$.
}
\end{proof}

\subsection[Proofs of Theorem~\ref{thm:boundary-clock} and \vthirtyfour{Corollary~\ref{cor:lower-exposure}}]{\vnew{Proofs of Theorem~\ref{thm:boundary-clock} and \vthirtyfour{Corollary~\ref{cor:lower-exposure}}}}
\label{sec:boundary-extinction}

\begin{proof}[Proof of Theorem~\ref{thm:boundary-clock}]
For \(\varepsilon>0\), set $
 \tau_\varepsilon^{X,-}:=\inf\{t\ge0:X_t\le\varepsilon\}$ and $
 \tau_\varepsilon^{Y,-}:=\inf\{t\ge0:Y_t\le\varepsilon\}.$
Since all jumps are nonnegative, every downward crossing is continuous.  Thus, on the event that the relevant coordinate reaches zero before source stopping by the other coordinate,
\(
 \tau_\varepsilon^{X,-}\uparrow\tauX
\) or
\(
 \tau_\varepsilon^{Y,-}\uparrow\tauY
\), respectively.

We first prove parts \textup{(i)} and \textup{(ii)}.
Fix \(q>0\).  For \(f(x,y)=x^{-q}\), we have $
 f_x'(x, y)=-qx^{-q-1}$ and $
 f_{xx}''(x, y)=q(q+1)x^{-q-2}.$
After the substitution \(z=xw\) in the jump term,
\[
 \int_0^\infty\bigl[(x+z)^{-q}-x^{-q}+qx^{-q-1}z\bigr]\mu_1(\dd z)
 =x^{-q-\alpha_1}K_{\alpha_1,q},
\]
where
{
\[
 K_{\alpha,q}:=\int_0^\infty\bigl[(1+v)^{-q}-1+qv\bigr]\mu_\alpha(\dd v) = -qj_\alpha(-q)
\]}
by Lemma \ref{lem:stable-power}.
Therefore, by \eqref{eq:noimm-generator},
{
\begin{align*}
 \frac{\cL f(x,y)}{f(x,y)}
 ={}&-qa_1x^{\theta_1-1}y^{\kappa_1}
 +qb_{10}x^{r_{10}-1}
 +q(q+1)b_{11}x^{r_{11}-2}
 +b_{12}K_{\alpha_1,q}x^{r_{12}-\alpha_1}.
\end{align*}}
If \(r_1\ge0\), then for each $N>1$, there exists a constant $C_N > 0$ such that 
{
\[
 \cL f(x,y)\le C_Nf(x,y), \qquad 0 < x, y \le N.
\]}

For \(N>1\vee x\vee y\) and \(0<\varepsilon<x\),
\vthirtyfour{\eqref{mart} and Gronwall's inequality give}
\[
 \E_{x,y}f(X_{t\wedge\tau_\varepsilon^{X,-}\wedge\tau_N^+\wedge\tau_0},
 Y_{t\wedge\tau_\varepsilon^{X,-}\wedge\tau_N^+\wedge\tau_0})
 \le x^{-q}\e^{C_Nt}.
\]
On \(\{\tau_\varepsilon^{X,-}\le t,\ \tau_\varepsilon^{X,-}<\tau_N^+\wedge\tau_0\}\), by the continuity of the downward crossing, we have \(X_{\tau_\varepsilon^{X,-}}=\varepsilon\).  Thus
\beqnn
 \Pp_{x,y}\bigl(\tau_\varepsilon^{X,-}\le t,
 \ \tau_\varepsilon^{X,-}<\tau_N^+\wedge\tau_0\bigr)
 \le\left(\frac\varepsilon x\right)^q\e^{C_Nt}.
\eeqnn
Letting
\(\varepsilon\downarrow0\), and then \(N\to\infty\), we get $\mathbb{P}(\tau_0^X\le t, \tau_0^X \le \tau_\infty\wedge\tau_0) = 0$, which gives
\beqnn
\mathbb{P}(\tau_0^X < \infty, \tau_0^X\le \tau_\infty\wedge\tau_0) = 0.
\eeqnn 
If \(\tauX<\infty\), then \(\tauX=\tau_0\). 
Part \textup{(i)} follows \vthirtyfour{from the source-conservativeness of the model} \eqref{eq:noimm-system}.

For part \textup{(ii)}, take \(g(x,y)=y^{-q}\).  Similarly,
{
\begin{align*}
 \frac{\cL g(x,y)}{g(x,y)}
 ={}&qa_2 x^{\kappa_2}y^{\theta_2-1}
 +qb_{20}y^{r_{20}-1}
 +q(q+1)b_{21}y^{r_{21}-2}
 +b_{22}K_{\alpha_2,q}y^{r_{22}-\alpha_2}.
\end{align*}}
If \(\theta_2\ge1\), \(r_2\ge0\), and \(0<x,y\le N\), the right-hand side is bounded above by a deterministic constant \(\widetilde C_N\).  Repeating the preceding stopped argument with \(\tau_\varepsilon^{Y,-}\) yields
\[
 \Pp_{x,y}\bigl(\tau_\varepsilon^{Y,-}\le t,
 \ \tau_\varepsilon^{Y,-}<\tau_N^+\wedge\tau_0\bigr)
 \le\left(\frac\varepsilon y\right)^q\e^{\widetilde C_Nt}.
\]
The same limits prove part \textup{(ii)}.

For part \textup{(iii)}, let $h(x, y)=y^{1-\theta_2}$. For $y>0$, we have $ h_y'(x,y) =(1-\theta_2)y^{-\theta_2},$
   and $h_{yy}''(x, y)=-\theta_2(1-\theta_2)y^{-1-\theta_2}\le0,$
which implies
\beqnn 
 (y+z)^{1-\theta_2}-y^{1-\theta_2}
 -(1-\theta_2)y^{-\theta_2}z\le0.
\eeqnn 
Then by \eqref{eq:noimm-generator},
\beqlb\label{eq:power-clock-generator}
 \cL h(x, y)
  \le -(1 - \theta_2)a_2 x^{\kappa_2}.
\eeqlb
For $N>1\vee x\vee y$ and $0<\varepsilon<y$, \vthirtyfour{\eqref{mart} and \eqref{eq:power-clock-generator} give}
\[
 \E Y_{t\wedge\tau_N^+\wedge\tau_\varepsilon^{Y,-}\wedge\tau_0}^{1-\theta_2}
 +(1-\theta_2)a_2\E
 \int_0^{t\wedge\tau_N^+\wedge\tau_\varepsilon^{Y,-}\wedge\tau_0}
 X_s^{\kappa_2}\dd s
 \le y^{1-\theta_2}.
\]
\vthirtyfour{Letting \(N\to\infty\) and then \(\varepsilon\downarrow0\), Fatou's lemma and source-conservativeness yield}
\begin{equation}
 \E Y_{t\wedge\tau_0}^{1-\theta_2}
 +(1-\theta_2)a_2\E\int_0^{t\wedge\tau_0}X_s^{\kappa_2}\dd s
 \le y^{1-\theta_2}.
 \label{eq:finite-basic-clock}
\end{equation}
Since the source-stopped process is frozen at \(\tau_0\), and \(X\) is
zero if it reaches the boundary first, then $\int_0^{\tau_0}X_s^{\kappa_2}\dd s
 =\int_0^{\tauY}X_s^{\kappa_2}\dd s.$ \eqref{eq:basic-clock} follows by letting $t\to\infty$.

On \(\{\tauY=\infty\}\), the random variable \(\int_0^{\tauY}X_s^{\kappa_2}\dd s\) is finite almost surely by \eqref{eq:basic-clock}; taking the contrapositive, it proves \eqref{eq:exposure-implies-extinction}. { Finally, on the event $\{\tauY>T,\
\int_0^T X_s^{\kappa_2}\dd s\ge L \},$
either $\tau_0>T$, or $X$ reaches zero \vthirtyfour{by time} \(T\). In the latter case, source stopping gives $X_s=0$ for $s\ge\tau_0$. Consequently, on $\{\tauY>T,\
\int_0^T X_s^{\kappa_2}\dd s\ge L \},$
\[
\int_0^{T\wedge\tau_0}X_s^{\kappa_2}\dd s
=\int_0^T X_s^{\kappa_2}\dd s\ge L.
\]
Hence, by Markov's inequality and \eqref{eq:finite-basic-clock},
\beqnn
	\Pp_{x,y}\left(
	\tauY>T,\
	\int_0^T X_s^{\kappa_2}\dd s\ge L\right) \le
	\frac1L\E_{x,y}
	\int_0^{T\wedge\tau_0}X_s^{\kappa_2}\dd s
	\le\frac{y^{1-\theta_2}}
	{a_2(1-\theta_2)L}.
\eeqnn
The result follows.}
\end{proof}

\begin{proof}[Proof of \vthirtyfour{Corollary~\ref{cor:lower-exposure}}]
	Observe that
	\[
	\int_0^\infty X_s^{\kappa_2}\dd s
	\ge c_0^{\kappa_2}\int_{T_0}^\infty(1+s)^{-\beta\kappa_2}\dd s=\infty
	\]
	when \(\beta\kappa_2\le1\).  The inclusion \eqref{eq:exposure-implies-extinction} completes the proof.
\end{proof}

\subsection[Proof of Theorem~\ref{thm:kappa-le-r}]{\vnew{Proof of Theorem~\ref{thm:kappa-le-r}}}

\begin{proof}[Proof of Theorem~\ref{thm:kappa-le-r}]
 We couple an auxiliary process
$\overline{Y}$ to the same noises as $Y$,
but replace $X$ in its drift by  
$\Xlow$ given by \eqref{eq:lower-envelope}. Define $\tau_0^{\overline{Y}} := \inf\{t\ge0: \overline{Y}_t =0\}.$ \vthirtyfour{Lemma~\ref{lem:lower-comparison} gives} $\Xlow_t \le X_t$ almost surely. \vthirtyfour{Under the assumptions of this theorem, \cite[Theorem~1.7 and Example~1.12]{RenXiongYangZhou2022} yields}
$\Pp_{x,y}(\tau_0^{\overline{Y}} <\infty)=1.$
By the comparison theorem
\cite[Proposition~3.6]{RenXiongYangZhou2022}, we have almost surely $Y_t\le \overline{Y}_t$ until either process reaches zero. Therefore $\tauY\le\tau_0^{\overline{Y}}<\infty$ almost surely. The result follows.  
\end{proof}

\subsection[Proof of Theorem~\ref{thm:FK-rate}]{\vnew{Proof of Theorem~\ref{thm:FK-rate}}}

\begin{proof}[Proof of Theorem~\ref{thm:FK-rate}]
{ We first prove part \textup{(i)}.
Fix \(R>1\vee x\vee y\), \(\varepsilon\in(0,y)\), and let
\beqnn
\sigma_{R,\varepsilon} := \tau_\varepsilon^{Y,-}\wedge\tau_0
 \wedge\inf\{t\ge0:X_t\vee Y_t\vee G_{2,t}\vee G_{2,t}^{-1}\ge R\}.
\eeqnn 
\vthirtyfour{Here} $G_{2,t}$ satisfies \eqref{eq:G2}. By \vthirtyfour{It\^o's formula}, 
\beqlb\label{eq0820} 
 G_{2,t}^{\theta_2-1}Y_t^{1-\theta_2}
 =y^{1-\theta_2}-(1-\theta_2)a_2
 \int_0^tG_{2,u}^{\theta_2-1}X_u^{\kappa_2}\dd u, \quad t \le \sigma_{R,\varepsilon}.
\eeqlb
Let \(R\to\infty\) and \(\varepsilon\downarrow0\).  Source-conservativeness and the c\`adl\`ag paths give \eqref{eq:FK-identity} for every $t<\tau_0$.

The right-hand side of \eqref{eq0820} is continuous and nonincreasing,
and it is strictly positive for every \(t<\tau_0\).  If
\(\tauY<\infty\), then \(\tauY=\tau_0\).  Moreover, by
Lemma \ref{lem:geometric-factor},
\(G_{2,t}\to G_{2,\tauY-}\in(0,\infty)\) as \(t\uparrow\tauY\).
Letting \(t\uparrow\tauY\) in \eqref{eq0820} gives
\[
\int_0^{\tauY}
G_{2,u}^{\theta_2-1}X_u^{\kappa_2}\dd u
=
\frac{y^{1-\theta_2}}{(1-\theta_2)a_2}.
\]
Next, suppose that \(\tauY=\infty\) and \(\tauX<\infty\).  Then
\(\tau_0=\tauX\) and \(Y_{\tau_0}>0\).  \vthirtyfour{Letting
\(t\uparrow\tau_0\) in \eqref{eq0820} yields}
\beqlb\label{eq0824}
\int_0^{\tau_0}
G_{2,u}^{\theta_2-1}X_u^{\kappa_2}\dd u
=
\frac{
	y^{1-\theta_2}
	-G_{2,\tau_0-}^{\theta_2-1}Y_{\tau_0}^{1-\theta_2}}
{(1-\theta_2)a_2}
<
\frac{y^{1-\theta_2}}{(1-\theta_2)a_2}.
\eeqlb
 If
\(\tau_0=\infty\), by \eqref{eq:FK-identity},  replacing $\tau_0$ by $t$ in \eqref{eq0824}, it holds for all $t \ge 0$.  Those cases prove \eqref{eq:FK-hitting}.

On \(\{\tauY=\infty\}\), \eqref{eq0824} holds when
\(\tau_0<\infty\); when \(\tau_0=\infty\), \vthirtyfour{monotone convergence and
\eqref{eq:FK-identity} give}
\[
\int_0^\infty
G_{2,u}^{\theta_2-1}X_u^{\kappa_2}\dd u
\le
\frac{y^{1-\theta_2}}{(1-\theta_2)a_2}.
\]
This proves \eqref{eq:FK-survival-budget}.}

For part \textup{(ii)}, we assume \(\{\tauY=\infty\}\).
Let \(\Xlow\) be the solution of \eqref{eq:lower-envelope}. \vthirtyfour{Proposition~\ref{prop:lower-log-rate} and
Lemma~\ref{lem:lower-comparison} show that} \(X_t\ge\Xlow_t>0\) for all $t$.  Thus
\(\tauX=\infty\) and \(\tau_0=\infty\) on the event under consideration.  
Therefore, for every \(t>0\),
\begin{equation}
 \int_0^tG_{2,u}^{\theta_2-1}X_u^{\kappa_2}\dd u
 \ge\int_0^tG_{2,u}^{\theta_2-1}\Xlow_u^{\kappa_2}\dd u.
 \label{eq:weighted-lower-clock}
\end{equation}
Combining Lemma \ref{lem:geometric-factor} with
Proposition~\ref{prop:lower-log-rate}, we obtain
\beqlb\label{eq0824a} 
 \lim_{t\to\infty}\frac1t
 \log\bigl(G_{2,t}^{\theta_2-1}\Xlow_t^{\kappa_2}\bigr)
 =(1-\theta_2)b_2-\kappa_2b_1\1_{\{r_1=0\}}.
\eeqlb
If the right-hand side of \eqref{eq0824a} is positive,
the integrand in
\eqref{eq:weighted-lower-clock} is eventually bounded below by
\(\e^{\delta t}\) for some \(\delta>0\).  The weighted integral therefore
diverges, contradicting \eqref{eq:FK-survival-budget}.  This proves part
\textup{(ii)(a)}.

For part \textup{(ii)(b)}, by the assumptions and
Proposition~\ref{prop:relative-jump}, one sees that
\(\Xlow_t=xG_{1,t}\), with \(G_1\) independent of \(G_2\).  Set 
\[
 \xi_t:=(1-\theta_2)\log G_{2,t}-\kappa_2\log G_{1,t}.
\]
Then \(\xi := (\xi_t)_{t \ge 0}\) is an integrable L\'evy process. By
Lemma \ref{lem:geometric-factor}, we have
\(\E\xi_1=\kappa_2b_1-(1-\theta_2)b_2=0\) and $\lim_{t \rightarrow \infty}\frac{1}{t}\xi_t = 0$ almost surely. \vthirtyfour{The criterion of
Bertoin and Yor}~\cite[Theorem~1]{BertoinYor2005} therefore gives
\[
 \int_0^\infty
 G_{2,t}^{\theta_2-1}\Xlow_t^{\kappa_2}\dd t
 =x^{\kappa_2}\int_0^\infty\e^{-\xi_t}\dd t
 =\infty
 \qquad\text{almost surely}.
\]
Together with \eqref{eq:weighted-lower-clock}, this contradicts
\eqref{eq:FK-survival-budget} on \(\{\tauY=\infty\}\).  Therefore
\(\Pp_{x,y}(\tauY<\infty)=1\), proving part \textup{(ii)(b)}.

Finally, if \(r_1>0\), the right-hand side of
\eqref{eq0824a} equals
\((1-\theta_2)b_2>0\).  The argument used for part~\textup{(ii)(a)}
therefore proves part~\textup{(ii)(c)}. 
\end{proof}

\subsection[Proof of Theorem~\ref{thm:critical-transition}]{\vnew{Proof of Theorem~\ref{thm:critical-transition}}}

\begin{proof}[Proof of Theorem~\ref{thm:critical-transition}] 
	{
(i). We first consider $t < \tau_0\wedge\tau_\infty$.   The localized calculation used in the
proof of Theorem~\ref{thm:FK-rate}\textup{(i)} gives
\eqref{eq:FK-identity} there.  In particular, it is easy to see that \(Y_t\le yG_{2,t}\).
By Proposition~\ref{prop:relative-jump}, the first equation of \eqref{eq:noimm-system} has the
 following realization: 
\[
\dd X_t=X_{t-}\left[(a_1Y_t^{\kappa_1}-b_{10})\dd t
+\sqrt{2b_{11}}\dd B_1(t)
+\int_0^\infty v\,\widetilde M_1(\dd t,\dd v)\right].
\]
\vthirtyfour{Then It\^o's formula and \eqref{eq:G2} give}
	\beqnn 
		\frac{X_t}{G_{1,t}} 
		\ar =\ar x+
		\int_0^t\frac{1}{G_{1,s-}}\,\dd X_s
		-\int_0^t\frac{X_{s-}}{G_{1,s-}^2}\,\dd G_{1,s} +\int_0^t\frac{X_{s-}}{G_{1,s-}^3}\,
		\dd\langle G_1^c\rangle_s\cr
		\ar\ar -\int_0^t\frac{1}{G_{1,s-}^2}\,
		\dd\langle X^c,G_1^c\rangle_s  +\sum_{0<s\le t}\Biggl[
		\frac{X_s}{G_{1,s}}
		-\frac{X_{s-}}{G_{1,s-}}
		-\frac{\Delta X_s}{G_{1,s-}}
		+\frac{X_{s-}\Delta G_{1,s}}{G_{1,s-}^2}
		\Biggr]\\
		\ar=\ar x + 
		a_1\int_0^t
		Y_s^{\kappa_1}\frac{X_s}{G_{1,s}}\,\dd s.
	\eeqnn 
{
	Consequently, \eqref{eq:critical-X-factor} follows for $t < \tau_0\wedge\tau_\infty$. Together with \(Y_t\le yG_{2,t}\), this yields
		\begin{equation}
			xG_{1,t}\le X_t
			\le xG_{1,t}\exp\left(
			a_1y^{\kappa_1}\int_0^tG_{2,u}^{\kappa_1}\dd u
			\right)
			\label{eq:critical-bounds}
		\end{equation}
	holds for $t<\tau_0\wedge\tau_\infty.$
	For every constant \(T> 0\), we have $
	0<\inf_{0\le t\le T}G_{i,t}
	\le\sup_{0\le t\le T}G_{i,t}<\infty$ for $i = 1, 2$. Then $\sup_{0\le t<T\wedge\tau_0\wedge\tau_\infty}
	(X_t\vee Y_t)<\infty$ almost surely. Recall that \(T\) is arbitrary.
	By the definition
	\(\tau_\infty=\lim_{R\uparrow\infty}\tau_R^+\), we have
	\[
	\Pp_{x,y}(\tau_\infty<\infty,\ \tau_\infty\le\tau_0)=0,
	\]
	so the model is source-conservative. Hence $\tau_0 < \tau_\infty$ if $\tau_0<\infty$. Letting \(t\uparrow\tau_0\) in the lower bound
	of \eqref{eq:critical-bounds}, we obtain $X_{\tau_0}\ge xG_{1,\tau_0-}>0.$
	Thus \(Y\), rather than \(X\), reaches the first boundary. Therefore
	\(\tau_0=\tauY\) almost surely, and the factorization above proves
	\eqref{eq:critical-X-factor} and {eq:critical-bounds} for every \(t<\tauY\). Finally, \eqref{eq:FK-identity}, \eqref{eq:FK-hitting} and
	\eqref{eq:FK-survival-budget} follow from Theorem~\ref{thm:FK-rate}\textup{(i)}. This proves part~\textup{(i)}.
	}

\textup{(ii)}. For \(t<\tauY\), by the lower bound in
\eqref{eq:critical-bounds}, we have
\begin{equation}
 \int_0^tG_{2,u}^{\theta_2-1}X_u^{\kappa_2}\dd u
 \ge x^{\kappa_2}\int_0^t
 G_{2,u}^{\theta_2-1}G_{1,u}^{\kappa_2}\dd u.
 \label{eq:critical-clock-lower}
\end{equation}
Set $ \xi_t:=\kappa_2\log G_{1,t}-(1-\theta_2)\log G_{2,t}.$
By Proposition~\ref{prop:relative-jump}
and \Cref{lem:geometric-factor}, one sees that  \(\xi\) is a L\'evy process and
\[
 \E|\xi_1|<\infty,\qquad
 \E\xi_1=(1-\theta_2)b_2-\kappa_2b_1\ge0.
\]
{Consequently, by  \cite[Theorem~36.5]{Sato1999}, $\lim_{t\to\infty} \xi_t/t =\E\xi_1\ge0$ almost surely.
	If \(\xi\equiv0\), then $\int_0^\infty G_{2,u}^{\theta_2-1}G_{1,u}^{\kappa_2}\dd u=\int_0^\infty\e^{\xi_u}\dd u =\int_0^\infty1\dd u=\infty.$ Suppose now that \(\xi\) is nontrivial. By \cite[Theorem~1]{BertoinYor2005}, it yields}
\[
 \int_0^\infty G_{2,u}^{\theta_2-1}G_{1,u}^{\kappa_2}\dd u
 =\int_0^\infty\e^{\xi_u}\dd u=\infty
\quad\text{almost surely}.
\]
On \(\{\tauY=\infty\}\), this contradicts
\eqref{eq:FK-survival-budget}.  This proves part~\textup{(ii)}.}

  \textup{(iii)}. Recall that $b_1, b_2 > 0$. By Lemma \ref{lem:geometric-factor}, for \(i=1,2\), almost surely, we have
\begin{equation}
	\lim_{t\to\infty}\frac1t\log G_{i,t}
	=-b_i \quad \text{and} \quad
	\int_0^\infty G_{i,t}^q\dd t<\infty\quad\vthirtyfour{\text{for any }q>0}.
	\label{eq:Gi-rates}
\end{equation}
Similarly, we define $\eta_t:=\kappa_2\log G_{1,t}
-(1-\theta_2)\log G_{2,t}$, which is a L\'{e}vy process. By \eqref{eq:Gi-rates}, it follows that $\lim_{t \rightarrow \infty}\eta_t/t =
-\kappa_2b_1+(1-\theta_2)b_2
=-\delta<0$ almost surely.
 It follows from \cite[Theorem~1]{BertoinYor2005} that
\beqlb\label{eq0821}
 \int_0^\infty G_{2,u}^{\theta_2-1}G_{1,u}^{\kappa_2}\dd u =\int_0^\infty e^{\eta_u}\dd u  <\infty
 \quad\text{almost surely}.
\eeqlb
From \eqref{eq:critical-bounds}, \eqref{eq:Gi-rates} and \eqref{eq0821}, we have
\beqnn
 \int_0^{\tauY}G_{2,u}^{\theta_2-1}X_u^{\kappa_2}\dd u\le x^{\kappa_2}
 \exp\left(\kappa_2a_1y^{\kappa_1}
 \int_0^\infty G_{2,u}^{\kappa_1}\dd u\right)
 \int_0^\infty G_{2,u}^{\theta_2-1}G_{1,u}^{\kappa_2}\dd u =: x^{\kappa_2} Z_y,
\eeqnn
which is finite almost surely.  If $\tau_0^Y < \infty$, we have $x^{\kappa_2}Z_y \ge \frac{y^{1-\theta_2}}{(1-\theta_2)a_2}$ by \eqref{eq:FK-hitting}.
Notice that the law of \(Z_y\) does not depend on \(x\).  One obtains that
\beqnn
\lim_{x \downarrow 0}\mathbb{P}_{x,y}(\tau_0^Y < \infty) \le \lim_{x \downarrow 0}\mathbb{P}_{x,y}\left(Z_y \ge \frac{y^{1-\theta_2}}{(1-\theta_2)a_2} x^{-\kappa_2}\right) = 0,
\eeqnn 
which proves the first assertion of part (iii).
Assume additionally \(b_{11}+b_{12}>0\), and fix \(T>0\). By \eqref{eq:critical-bounds} and \eqref{eq:FK-hitting}, on the event \(\{\tauY>T\}\), we have
\beqnn 
x^{\kappa_2}\int_0^T
G_{2,u}^{\theta_2-1}G_{1,u}^{\kappa_2}\dd u \le \int_0^{T}G_{2,u}^{\theta_2-1}X_u^{\kappa_2}\dd u
 < \frac{y^{1-\theta_2}}{(1-\theta_2)a_2}.
\eeqnn 
Hence 
\beqnn 
 \left\{x^{\kappa_2}\int_0^T
 G_{2,u}^{\theta_2-1}G_{1,u}^{\kappa_2}\dd u
 \ge\frac{y^{1-\theta_2}}{(1-\theta_2)a_2}\right\}
 \subseteq\{\tauY\le T\}.
\eeqnn 
By Lemma \ref{lem:clock-unbounded-support}, the event on the left has positive probability. Thus the second assertion of part \textup{(iii)} 
holds. Combining it with the first assertion yields the final assertion for
all sufficiently small \(x\). 

	\textup{(iv)}. By part~\textup{(i)}, we have \(\tau_0=\tauY\) almost surely.
	On \(\{\tauY=\infty\}\), \vthirtyfour{\eqref{eq:critical-bounds} and
	\eqref{eq:Gi-rates} show that} \(X_t/G_{1,t}\) is bounded above and below
	by positive finite random constants, uniformly in \(t\ge0\).
	Consequently, it follows from \eqref{eq:Gi-rates} that
	\[
	\lim_{t\to\infty}\frac1t\log X_t=-b_1 
	\qquad \text{and} \qquad 
	\int_0^\infty X_t^{\kappa_2}\dd t<\infty.
	\]
Define the bracket in \eqref{eq:FK-identity} by
\[
 H_t:=y^{1-\theta_2}-(1-\theta_2)a_2
 \int_0^tG_{2,u}^{\theta_2-1}X_u^{\kappa_2}\dd u.
\]
On \(\{\tauY=\infty\}\), \(0<H_t\le y^{1-\theta_2}\). Since $\lim_{t \rightarrow \infty}t^{-1}\log G_{2,t}\to-b_2<0$ by \eqref{eq:Gi-rates}, we have $\lim_{t \rightarrow \infty}G_{2,t} = 0$. It follows that $\lim_{t \rightarrow \infty}Y_t = 0$  by
\eqref{eq:FK-identity}.
Notice that $H_\infty := \lim_{t \rightarrow \infty}H_t \ge0$.  If
\(H_\infty>0\), then \(t^{-1}\log H_t\to0\), and by
\vthirtyfour{\eqref{eq:FK-identity}, we obtain}
$\lim_{t\to\infty}\frac1t\log Y_t = \lim_{t \rightarrow \infty}t^{-1}\log G_{2,t}=-b_2.$
If \(H_\infty=0\), then \(H_t\le1\) for all sufficiently large \(t\).
Consequently,
\[
 \frac1t\log Y_t
 =\frac1t\log G_{2,t}
 +\frac{1}{(1-\theta_2)t}\log H_t
 \le\frac1t\log G_{2,t}
\]
for large $t$, and therefore $
 \limsup_{t\to\infty}\frac1t\log Y_t\le-b_2.$
This proves part~\textup{(iv)}. 
\end{proof}

\subsection[Proof of Corollary~\ref{cor:deterministic-zero-prob}]{\vnew{Proof of Corollary~\ref{cor:deterministic-zero-prob} }}

\begin{proof}[Proof of \vthirtyfour{Corollary~\ref{cor:deterministic-zero-prob}}]
Here \(G_{1,t}=\e^{-b_{10}t}\), \(G_{2,t}=\e^{-b_{20}t}\)  and $G_{2,t}^{\theta_2-1}G_{1,t}^{\kappa_2}
 =\e^{-[\kappa_2b_{10}-(1-\theta_2)b_{20}]t}.$
{
If \(\kappa_2b_{10}-(1-\theta_2)b_{20}\le0\), then the right-hand side
of \eqref{eq:critical-clock-lower} diverges as \(t\to\infty\).
\vthirtyfour{Hence \eqref{eq:FK-hitting} implies} \(\tauY<\infty\).}

Assume \(\kappa_2b_{10}-(1-\theta_2)b_{20}>0\).  By \eqref{eq:FK-identity}, for $t < \tau_0^Y$,
\(Y_t\le y\e^{-b_{20}t}\), and then $
 \int_0^tY_u^{\kappa_1}\dd u
 \le\frac{y^{\kappa_1}}{\kappa_1b_{20}}.$
Using \eqref{eq:critical-X-factor}, we have
\[
 X_t\le x\e^{-b_{10}t}
 \exp\left(\frac{a_1y^{\kappa_1}}{\kappa_1b_{20}}\right), \qquad t < \tau_0^Y.
\]
Consequently, for every \(t<\tauY\), by \eqref{eq:det-survival-condition} and the above,
\[
\int_0^tG_{2,u}^{\theta_2-1}X_u^{\kappa_2}\dd u
\le
\frac{x^{\kappa_2}}
{\kappa_2b_{10}-(1-\theta_2)b_{20}}
\exp\left(
\frac{\kappa_2a_1y^{\kappa_1}}{\kappa_1b_{20}}
\right) < \frac{y^{1-\theta_2}}{(1 - \theta_2)a_2},
\]
which implies that
\beqnn
\int_0^{\tau_0^Y} G_{2,u}^{\theta_2-1}X_u^{\kappa_2}\dd u < \frac{y^{1-\theta_2}}{(1 - \theta_2)a_2}.
\eeqnn 
Hence $\tau_0^Y = \infty$ by \eqref{eq:FK-hitting}.

{
Conversely, suppose for contradiction that \(\tauY=\infty\).
By \eqref{eq:critical-X-factor}, we have
\[
X_t=xe^{-b_{10}t}
\exp\left(a_1\int_0^tY_u^{\kappa_1}\dd u\right),
\qquad t\ge0.
\]
Since \(a_1>0\) and \(Y_u\ge0\), it follows that
\(X_t\ge x\e^{-b_{10}t}\) for every \(t\ge0\), and hence
\[
 \int_0^tG_{2,u}^{\theta_2-1}X_u^{\kappa_2}\dd u
 \ge\frac{x^{\kappa_2}}{\kappa_2b_{10}-(1-\theta_2)b_{20}}
 \left(1-\e^{-[\kappa_2b_{10}-(1-\theta_2)b_{20}]t}\right).
\]
By \eqref{eq:det-extinction-condition}, the limit of the right-hand
side is strictly larger than
\(y^{1-\theta_2}/((1-\theta_2)a_2)\).  \vthirtyfour{This contradicts} \eqref{eq:FK-survival-budget}.  Hence
\(\tauY<\infty\).}
\end{proof}

\appendix

\section{}

\begin{lemma} 
\label{prop:local-wellposedness}
\rev{For every \(x,y>0\), equation~\eqref{eq:noimm-system} admits} a pathwise unique \vthirtyfour{strong}
c\`adl\`ag solution \((X,Y)\), defined on the time interval
\([0, \tau_0\wedge\tau_\infty)\) and taking values in \((0,\infty)^2\).  The solution is adapted to the completed filtration generated by the four driving noises.  The associated stopped interior martingale problem is well-posed and consequently defines a strong Markov family.
\end{lemma}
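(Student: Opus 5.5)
The plan is to localize to compact subsets of the open quadrant, where all coefficients are bounded, and then patch the local solutions together.

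For $n\ge2$ set $D_n=(n^{-1},n)^2$. Choose smooth cutoffs $\phi_n$ with values in $[n^{-1}/2,2n]$ such that $\phi_n(u)=u$ on $[n^{-1},n]$, and define truncated coefficients by replacing $X_s,Y_s$ with $\phi_n(X_s),\phi_n(Y_s)$ everywhere in \eqref{eq:noimm-system}. On the range of $\phi_n$, the following maps are $C^1$ with bounded derivatives, because their arguments are bounded away from $0$ and $\infty$:
\[
u\mapsto u^{\theta_i},\quad u^{\kappa_i},\quad u^{r_{i0}},\quad \sqrt{u^{r_{i1}}},\quad u^{r_{i2}}.
\]
Hence the truncated drift and diffusion coefficients are Lipschitz.

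For the jump terms, handle small and large jumps separately. The small jumps ($z\le1$) use the noise of the form $\int_0^{g(X_{s-})}z\,\widetilde N(\dd s,\dd z,\dd u)$ with $g$ Lipschitz. This satisfies the standard $L^2$-Lipschitz condition
\[
\int_0^1\int z^2\,\bigl|\1_{\{u\le g(x)\}}-\1_{\{u\le g(x')\}}\bigr|\,\dd u\,\mu(\dd z)\le C|x-x'|,
\]
and the bound is indeed $C|x-x'|$, not squared. So I would invoke the Dawson--Li/Li--Pu framework (Li 2020, Theorem on pathwise uniqueness for SDEs with non-Lipschitz jump coefficients). There, a Yamada--Watanabe type argument applies because $|g(x)-g(x')|$ enters linearly and the drift is Lipschitz. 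The large jumps ($z>1$) form a finite-intensity term on compact sets and are added by interlacing. This yields a pathwise unique strong solution $Z^{(n)}$ of the truncated equation for all times, adapted to the noise filtration.

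Next comes consistency. By pathwise uniqueness, $Z^{(n)}$ and $Z^{(m)}$ agree up to the exit time $\sigma_n$ from $D_n$ when $m>n$. Define $Z$ on $[0,\sigma_n)$ as $Z^{(n)}$. Since $\sigma_n\uparrow\tau_0\wedge\tau_\infty$ by the definitions of $\tau_R^\pm$, this gives a solution on $[0,\tau_0\wedge\tau_\infty)$. One subtlety is that a jump may exit $D_n$ upward; the stopped solutions still agree at $\sigma_n$, because the jump is driven by the same Poisson atoms.

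For the martingale problem, Itô's formula shows that $Z$ stopped at $\sigma_n$ solves the $D_n$-stopped martingale problem for $\cL$. Conversely, any solution of that martingale problem can be represented, by a standard representation theorem (El Karoui--Lepeltier / Kurtz), as a weak solution of the truncated SDE up to $\sigma_n$. Pathwise uniqueness plus Yamada--Watanabe then gives uniqueness in law, so the stopped problem is well posed. Strong Markov follows from well-posedness together with measurability of the laws in the initial point (Ethier--Kurtz, Chapter 4, Theorem 4.2 and Section 4.6).

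The main obstacle I expect is the pathwise uniqueness for the state-dependent jump-rate coefficient when $r_{i2}$ could be small. On $D_n$ this is harmless after truncation, but the small-jump estimate must be checked carefully, since $\int_0^1 z\,\mu(\dd z)=\infty$ for $\alpha_i>1$. I would therefore use the $L^2$ version with the $z^2$ weight combined with the Li--Pu/Yamada--Watanabe function argument rather than an $L^1$ estimate.
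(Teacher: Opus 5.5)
Your proposal is correct and follows the paper's route. The paper's proof only observes that the coefficients are locally Lipschitz on $(0,\infty)^2$, invokes the localization argument of Lemma~A.1 in Ren, Xiong, Yang and Zhou, and deduces well-posedness of the stopped interior martingale problem and the strong Markov property from weak uniqueness. Your truncation, Yamada--Watanabe/Li--Pu pathwise uniqueness for the jump-rate term, interlacing of the large jumps, and patching along $\sigma_n=\tau_{1/n}^-\wedge\tau_n^+$ make that citation explicit. One detail to add: take the cutoff $\phi_n$ nondecreasing, so that $x\mapsto x+z\1_{\{u\le b_{i2}\phi_n(x)^{r_{i2}}\}}$ is monotone, since the Li--Pu-type uniqueness theorem needs this hypothesis.
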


\begin{proof}
All the coefficients of \eqref{eq:noimm-system} are locally Lipschitz
on \((0,\infty)^2\). Hence, the same localization argument as in
 \cite[Lemma~A.1]{RenXiongYangZhou2022}
gives a pathwise unique \vthirtyfour{strong} solution up to
\(\tau_0\wedge\tau_\infty\). The well-posedness of the stopped
interior martingale problem and the strong Markov property then follow
from the corresponding weak uniqueness.
\end{proof}

The comparison argument below is in the spirit of classical one-dimensional
comparison due to \vthirtyfour{Ikeda and Watanabe}~\cite{IkedaWatanabe1977}, but the infinite-activity jump remainder
is treated explicitly.

\begin{lemma}
	\label{lem:lower-comparison}
	{  Let \((X,Y)\)
	and \(\Xlow\) be the solutions of \eqref{eq:noimm-system} and
	\eqref{eq:lower-envelope}, respectively.  Suppose that
	\(X_0=\Xlow_0=x>0\) and that they are driven by the same
	\(B_1,N_1\).}  Up to their common interior lifetime,
	\begin{equation}
		\Xlow_t\le X_t
		\qquad\text{almost surely}.
		\label{eq:lower-comparison-local}
	\end{equation}
	{If the model \eqref{eq:noimm-system} is
	source-conservative and the solution of \eqref{eq:lower-envelope} is
	global and strictly positive, then}
	\begin{equation}
		\Xlow_t\le X_t,
		\qquad 0\le t<\tau_0,
		\quad\text{almost surely}.
		\label{eq:lower-comparison}
	\end{equation}
	In particular, \(X\) cannot reach zero before \(Y\) on this parameter regime.
\end{lemma}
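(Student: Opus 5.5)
The comparison will be proved with a Yamada--Watanabe/Le Gall type argument applied to the positive part of $\Xlow-X$, with the stable-jump remainder estimated by hand. Fix $\varepsilon>0$ and $R>1\vee x$, and let $\sigma:=\tau_R^+\wedge\tau_\varepsilon^-\wedge\inf\{t:\Xlow_t\notin(\varepsilon,R)\}$. Up to $\sigma$ both processes stay in $[\varepsilon,R]$, so every coefficient is Lipschitz there. Write $\Delta_t:=\Xlow_t-X_t$. Subtracting the equations gives
\[
\Delta_t=\int_0^t\bigl(-a_1X_s^{\theta_1}Y_s^{\kappa_1}-b_{10}(\Xlow_s^{r_{10}}-X_s^{r_{10}})\bigr)\dd s+\text{(Brownian term)}+\text{(jump term)},
\]
where the jump term is $\int\int\int z\,(\1_{\{u\le b_{12}\Xlow_{s-}^{r_{12}}\}}-\1_{\{u\le b_{12}X_{s-}^{r_{12}}\}})\widetilde N_1(\dd s,\dd z,\dd u)$. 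The key structural fact is that the interaction drift $-a_1X^{\theta_1}Y^{\kappa_1}$ is $\le0$. Hence the drift of $\Delta$ is bounded above by a Lipschitz function of $\Delta$ that vanishes at $\Delta=0$.

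I will use smooth approximations $\phi_n$ of $z\mapsto z^+$ of Yamada--Watanabe type. These satisfy $0\le\phi_n'\le1$, $\phi_n'=0$ on $(-\infty,0]$, and $0\le\phi_n''(z)\le 2/(n|z|)$ on the support. Apply It\^o's formula to $\phi_n(\Delta_{t\wedge\sigma})$ and take expectations, which removes the martingale parts because they are bounded after localization. The three contributions are handled as follows.

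\textbf{Drift.} It is bounded by $L_R\E\int_0^{t\wedge\sigma}\Delta_s^+\dd s$, since $\phi_n'=0$ where $\Delta\le0$ and the interaction term is nonpositive.

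\textbf{Brownian term.} The square-root coefficient $\sqrt{x^{r_{11}}}$ is Lipschitz on $[\varepsilon,R]$, so its contribution is $\le C_R\E\int\phi_n''(\Delta)\Delta^2\,\dd s\le C_R/n\to0$.

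\textbf{Jump term.} This is where the infinite-activity remainder has to be treated explicitly, and I expect it to be the main obstacle. The compensator part is
\[
\int_0^\infty\bigl[\phi_n(\Delta+z)-\phi_n(\Delta)-z\phi_n'(\Delta)\bigr]\mu_1(\dd z)\cdot b_{12}\bigl|\Xlow^{r_{12}}-X^{r_{12}}\bigr|
\]
with the sign taken according to which rate is larger, with $z$ replaced by $-z$ when $X$ has the larger rate. I split at $z=1$ and the cutoff $|\Delta|$. For the large jumps I use $|\phi_n(\Delta\pm z)-\phi_n(\Delta)|\le z$ and $\phi_n'\le1$, together with $\int_1^\infty z\,\mu_1<\infty$. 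For the small jumps I use the second-order Taylor bound with $\phi_n''\le 2/(n|\Delta|)$, together with $\int_0^1z^2\mu_1<\infty$. The rate difference is $\le C_R|\Delta|$, and only $\Delta>0$ matters because every term carries $\phi_n'$ or $\phi_n''$ supported on $\{\Delta>0\}$ (jumps crossing from $\Delta\le0$ give $\phi_n(\Delta\pm z)\le(\Delta\pm z)^+$, handled by the same bound). The result is a bound of the form $C_R\E\int(\Delta^+ +1/n)\dd s$. This follows the spirit of the Li--Pu / Dawson--Li treatment of CB-type jump coefficients, with Lipschitz rather than H\"older rates here.

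Letting $n\to\infty$ gives $\E\Delta^+_{t\wedge\sigma}\le C_R\int_0^t\E\Delta^+_{s\wedge\sigma}\dd s$. Gronwall then yields $\Delta^+_{t\wedge\sigma}=0$ almost surely, and right-continuity gives this simultaneously for all $t$. Letting $\varepsilon\downarrow0$ and $R\uparrow\infty$ proves \eqref{eq:lower-comparison-local} up to the common interior lifetime.

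For \eqref{eq:lower-comparison}, source-conservativeness gives $\tau_0\le\tau_\infty$. Since $\Xlow$ is global and strictly positive, the common interior lifetime is exactly $\tau_0$ (including the case $\tau_0=\infty$), and the comparison extends to all $t<\tau_0$. Finally, on $\{\tau_0<\infty\}$ the lower bound gives $X_{\tau_0}=\lim_{t\uparrow\tau_0}X_t\ge\Xlow_{\tau_0-}>0$, because $\Xlow$ is càdlàg, strictly positive, and has only nonnegative jumps, so its left limits are positive. Hence the zero coordinate at $\tau_0$ must be $Y$.
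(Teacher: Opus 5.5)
Your argument is correct, but it takes a different technical route from the paper's. The paper applies the Meyer--Tanaka formula to $u\mapsto u^+$ directly for $D=\Xlow-X$ and shows that every term is nonpositive or zero:
\begin{itemize}
\item on $\{D_{s-}>0\}$ both drift contributions are $\le 0$, since the interaction term has a sign and $q\mapsto q^{r_{10}}$ is nondecreasing;
\item the local time $L^0(D)$ vanishes by the occupation-density formula, because the Brownian coefficient is Lipschitz on the localized region;
\item the jump remainder $D_s^+-D_{s-}^+-\1_{\{D_{s-}>0\}}\Delta D_s$ is identically zero, because $q\mapsto q+z\1_{\{u\le b_{12}q^{r_{12}}\}}$ is nondecreasing, so a jump never changes the sign of $D$.
\end{itemize}
This gives $\E(D_{t\wedge T_N})^+=0$ outright, with no smoothing, no compensator estimates against $\mu_1$, and no Gronwall. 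Your Yamada--Watanabe smoothing plus Gronwall is the more robust machinery. It needs the drift difference only to be Lipschitz rather than sign-definite, and it would tolerate H\"older diffusion coefficients. The price is the Taylor bounds on the jump compensator.

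Two small corrections. First, your jump estimate silently uses the same monotonicity the paper exploits. Because $q\mapsto b_{12}q^{r_{12}}$ is nondecreasing, $\Xlow$ has the larger rate exactly when $\Delta_{s-}>0$. So jumps of $\Delta$ starting from $\{\Delta\le0\}$ are nonpositive, and your crossing case never occurs. If it did occur, it could not be handled by the same bound. The rate difference there is of order $|\Delta|$ with $\Delta<0$, and since $\int_{|\Delta|}^1 z\,\mu_1(\dd z)$ blows up like $|\Delta|^{1-\alpha_1}$, the resulting positive contribution on $\{\Delta<0\}$ is not controlled by $C(\Delta^++1/n)$. You should state the monotonicity explicitly as the reason this term vanishes.

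Second, positivity of $\Xlow_{\tau_0-}$ does not follow from $\Xlow$ having only nonnegative jumps. Those give $\Xlow_{t-}\le\Xlow_t$, which is the wrong direction. It follows because ``global and strictly positive'' means the boundary time of $\Xlow$ is infinite, that is, $\inf_{[0,T]}\Xlow>0$ for every $T$.
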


\begin{proof}
	{
		Let $D_t:=\Xlow_t-X_t.$
		For \(N>1\), define
		\[
		T_N:=\inf\left\{t\ge0:
		X_t\notin(N^{-1},N)\ \text{or}\
		\Xlow_t\notin(N^{-1},N)\ \text{or}\
		Y_t\notin(N^{-1},N)\right\}.
		\]
		Since \(X_0=\Xlow_0=x\), we have \(D_0=0\). For \(t<T_N\),
		\beqnn  
			D_t\ar=\ar -b_{10}\int_0^t
			\bigl(\Xlow_s^{r_{10}}-X_s^{r_{10}}\bigr)\dd s
			-a_1\int_0^tX_s^{\theta_1}Y_s^{\kappa_1}\dd s\cr
			\ar\ar +\int_0^t
			\left(
			\sqrt{2b_{11}\Xlow_s^{r_{11}}}
			-\sqrt{2b_{11}X_s^{r_{11}}}
			\right)\dd B_1(s)\cr
			\ar\ar +\int_0^t\int_0^\infty\int_0^\infty
			z\left(
			\1_{\{u\le b_{12}\Xlow_{s-}^{r_{12}}\}}
			-\1_{\{u\le b_{12}X_{s-}^{r_{12}}\}}
			\right)
			\widetilde N_1(\dd s,\dd z,\dd u).
		\eeqnn
		\vthirtyfour{Applying the Meyer--Tanaka formula to \(u\mapsto u^+\)
		(see Protter}~\cite[Chapter~IV, Section~7]{Protter2005}\vthirtyfour{) and taking expectations, we obtain}
		\beqlb\label{eq:expectation-D-positive}
			\E\bigl[(D_{t\wedge T_N})^+\bigr]
			\ar=\ar -b_{10}\E\left[
			\int_0^{t\wedge T_N}
			\1_{\{D_{s-}>0\}}
			\bigl(\Xlow_s^{r_{10}}-X_s^{r_{10}}\bigr)\dd s
			\right] -a_1\E\left[
			\int_0^{t\wedge T_N}
			\1_{\{D_{s-}>0\}}
			X_s^{\theta_1}Y_s^{\kappa_1}\dd s
			\right]\cr
			\ar\ar +\frac12\E\bigl[L^0_{t\wedge T_N}(D)\bigr]
			+\E\Bigr[\sum_{0<s\le t\wedge T_N}\mathcal R_s\Bigr],
		\eeqlb
		where $\mathcal R_s
	:=
	D_s^+-D_{s-}^+
	-\1_{\{D_{s-}>0\}}\Delta D_s.$
On \(\{D_{s-}>0\}\), we have
		\(\Xlow_{s-}>X_{s-}\). Since \(r_{10}\ge0\), we have $\Xlow_s^{r_{10}}-X_s^{r_{10}}\ge0$,
		and therefore
		\beqnn
			 -b_{10}\E\left[
			\int_0^{t\wedge T_N}
			\1_{\{D_{s-}>0\}}
			\bigl(\Xlow_s^{r_{10}}-X_s^{r_{10}}\bigr)\dd s
			\right] 
			-a_1\E\left[
			\int_0^{t\wedge T_N}
			\1_{\{D_{s-}>0\}}
			X_s^{\theta_1}Y_s^{\kappa_1}\dd s
			\right]
			\le0.
		\eeqnn
		Moreover, let \(M^{D,c}\) be the continuous local martingale part of
		\(D\). Since $u\longmapsto\sqrt{2b_{11}u^{r_{11}}}$
		is Lipschitz on \([N^{-1},N]\), there exists \(C_N>0\) such that
		\[
		\dd\langle M^{D,c}\rangle_s
		=
		\left|
		\sqrt{2b_{11}\Xlow_s^{r_{11}}}
		-\sqrt{2b_{11}X_s^{r_{11}}}
		\right|^2\dd s
		\le C_ND_s^2\dd s
		\]
		for \(s<T_N\). Hence, by the occupation-density formula,
		\beqnn
			L^0_{t\wedge T_N}(D)
			\ar=\ar \lim_{\varepsilon\downarrow0}
			\frac1\varepsilon
			\int_0^{t\wedge T_N}
			\1_{\{0<D_s<\varepsilon\}}
			\dd\langle M^{D,c}\rangle_s\cr
			\ar\le\ar 
			\lim_{\varepsilon\downarrow0}
			\frac{C_N}{\varepsilon}
			\int_0^{t\wedge T_N}
			\1_{\{0<D_s<\varepsilon\}}D_s^2\dd s \le
			\lim_{\varepsilon\downarrow0}C_N\varepsilon t=0.
		\eeqnn
		Therefore, $\E\bigl[L^0_{t\wedge T_N}(D)\bigr]=0.$
		Finally, for a Poisson point \((z,u)\), define
		\[
		\Psi_{z,u}(q)
		:=q+z\1_{\{u\le b_{12}q^{r_{12}}\}},
		\qquad q>0.
		\]
		Since \(r_{12}\ge0\), the map \(q\mapsto\Psi_{z,u}(q)\) is
		nondecreasing. At a jump time \(s\), $D_s
		=\Psi_{z,u}(\Xlow_{s-})-\Psi_{z,u}(X_{s-}).$
		Thus \(D_{s-}>0\) implies \(D_s\ge0\), while \(D_{s-}\le0\) implies
		\(D_s\le0\). Consequently,
		\beqnn
			\mathcal R_s
			 =D_s^+-D_{s-}^+
			-\1_{\{D_{s-}>0\}}\Delta D_s 
			 =\1_{\{D_{s-}>0\}}
			\bigl(D_s-D_{s-}-\Delta D_s\bigr)=0.
		\eeqnn
		Hence
		\[
		 \E\Bigr[\sum_{0<s\le t\wedge T_N}\mathcal R_s\Bigr] =0.
		\]
		Substituting these estimates into
		\eqref{eq:expectation-D-positive}, we obtain $
		\E\bigl[(D_{t\wedge T_N})^+\bigr]
		=0.$
		Therefore, $(D_{t\wedge T_N})^+=0$ almost surely, 
		and hence $\Xlow_{t\wedge T_N}\le X_{t\wedge T_N}$ almost surely.
		Taking a countable intersection over rational \(t\ge0\), using the
		c\`adl\`ag property, and then letting \(N\to\infty\), we obtain
		\eqref{eq:lower-comparison-local}.
		
		If the model \eqref{eq:noimm-system} is source-conservative and
		\(\Xlow\) is global and strictly positive, the preceding comparison
		holds for every \(0\le t<\tau_0\), which proves
		\eqref{eq:lower-comparison}. Since \(\Xlow_t>0\), the inequality
		\(\Xlow_t\le X_t\) also shows that \(X\) cannot reach zero before
		\(Y\).
	}
\end{proof}


\begin{lemma} 
\label{lem:stable-power}
Let \(\alpha\in(1,2)\) and $\mu_\alpha(\dd z):=\frac{\alpha(\alpha-1)}{\Gamma(\alpha)\Gamma(2-\alpha)}z^{-1-\alpha}\1_{\{z>0\}}\dd z$.  For every \(q<\alpha\),  
\[
 \frac1q\int_0^\infty
 \bigl[(1+z)^q-1-qz\bigr]\mu_\alpha(\dd z)
 =j_\alpha(q),
 \qquad q\ne0,
\]
whereas
\[
 \int_0^\infty
 \bigl[\log(1+z)-z\bigr]\mu_\alpha(\dd z)
 =j_\alpha(0)=-1.
\]
Consequently, 
\beqnn
	\int_0^\infty
	\bigl[\Phi_q(u+z)-\Phi_q(u)-z\Phi_q'(u)\bigr]\mu_\alpha(\dd z)
	=j_\alpha(q)u^{q-\alpha} 
	\eeqnn
holds for every \(q<\alpha\) and \(u>0\).
\end{lemma}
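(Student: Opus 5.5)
The plan is to compute everything by the scaling substitution $z=uw$, which reduces the third identity to the first two. The first two identities are explicit integrals against the density $c_\alpha z^{-1-\alpha}$, where $c_\alpha=\alpha(\alpha-1)/(\Gamma(\alpha)\Gamma(2-\alpha))$, and I would evaluate them by integrating by parts twice.

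\textbf{Step 1: reduction to a Beta integral.} For $q<\alpha$ and $q\neq0$, set $g(z)=(1+z)^q-1-qz$. Then $g(0)=g'(0)=0$ and $g''(z)=q(q-1)(1+z)^{q-2}$. The boundary terms behave as follows:
\begin{itemize}
\item near $0$, $g(z)=O(z^2)$ and $\alpha<2$, so they vanish;
\item near $\infty$, $g(z)=O(z^{\max(q,1)})$ and $q<\alpha$, $1<\alpha$, so they also vanish.
\end{itemize}
Two integrations by parts therefore give
\[
\int_0^\infty g(z)z^{-1-\alpha}\dd z=\frac{q(q-1)}{\alpha(\alpha-1)}\int_0^\infty(1+z)^{q-2}z^{1-\alpha}\dd z .
\]
The remaining integral is the Beta function $B(2-\alpha,\alpha-q)=\Gamma(2-\alpha)\Gamma(\alpha-q)/\Gamma(2-q)$, which converges because $\alpha<2$ and $\alpha-q>0$.

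\textbf{Step 2: the constant.} Multiplying by $c_\alpha$ and dividing by $q$ gives
\[
\frac{(q-1)\Gamma(\alpha-q)}{\Gamma(\alpha)\Gamma(2-q)}=j_\alpha(q),
\]
which is \eqref{eq:j-alpha}.

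\textbf{Step 3: the case $q=0$.} I would argue in one of two ways. Either take $q\to0$ using dominated convergence, since $[(1+z)^q-1-qz]/q\to\log(1+z)-z$ with a bound uniform for small $|q|$. Or repeat Step 1 directly with $g=\log(1+z)-z$, for which $g''(z)=-(1+z)^{-2}$, giving $-c_\alpha B(2-\alpha,\alpha)/(\alpha(\alpha-1))$. Either way the value is
\[
j_\alpha(0)=-\Gamma(\alpha)/(\Gamma(\alpha)\Gamma(2))=-1.
\]

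\textbf{Step 4: the scaling identity.} For $u>0$, substitute $z=uw$. When $q\ne0$,
\[
\Phi_q(u+uw)-\Phi_q(u)-uw\,u^{q-1}=u^q\bigl[(1+w)^q-1-qw\bigr]/q ,
\]
and when $q=0$ the bracket is $\log(1+w)-w$. The measure transforms as $\mu_\alpha(\dd z)=u^{-\alpha}\mu_\alpha(\dd w)$. The consequence then follows from Steps 2 and 3.

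The only delicate point is bookkeeping the boundary terms in Step 1 when $q<0$ or $q>1$, which the growth estimates above handle. No earlier result of the paper is needed.
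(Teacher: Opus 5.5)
Your proposal is correct and follows the paper's proof essentially step for step. It uses the same two integrations by parts reducing to $\mathrm B(2-\alpha,\alpha-q)$ (the second step needs the analogous growth bounds on $g'$, which hold just as for $g$), the same computation with $g''(z)=-(1+z)^{-2}$ for $q=0$ (your alternative limit $q\to0$ by dominated convergence also works), and the same substitution $z=uw$ with $\mu_\alpha(\dd z)=u^{-\alpha}\mu_\alpha(\dd w)$.
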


\begin{proof}
For \(q\ne0\), let $ g_q(v):=(1+v)^q-1-qv.$
Then $g_q'(v)=q\bigl[(1+v)^{q-1}-1\bigr]$ and $g_q''(v)=q(q-1)(1+v)^{q-2}.$
 \vthirtyfour{Two integrations by parts give}
 \beqnn
 \int_0^\infty g_q(v)v^{-1-\alpha}\dd v
 \ar=\ar \frac1\alpha
   \int_0^\infty g_q'(v)v^{-\alpha}\dd v =\frac1{\alpha(\alpha-1)}
   \int_0^\infty g_q''(v)v^{1-\alpha}\dd v\cr
 \ar=\ar \frac{q(q-1)}{\alpha(\alpha-1)}
   \int_0^\infty
   v^{1-\alpha}(1+v)^{q-2}\dd v =\frac{q(q-1)}{\alpha(\alpha-1)}
   \mathrm B(2-\alpha,\alpha-q),
\eeqnn
which gives the first assertion by \eqref{eq:j-alpha}. 

For \(q=0\), \vthirtyfour{the same two integrations by parts give}
\begin{align*}
 \int_0^\infty[\log(1+v)-v]\mu_\alpha(\dd v)
 &=-\frac1{\Gamma(\alpha)\Gamma(2-\alpha)}
   \int_0^\infty\frac{v^{1-\alpha}}{(1+v)^2}\dd v\\
 &=-\frac1{\Gamma(\alpha)\Gamma(2-\alpha)}
   \mathrm B(2-\alpha,\alpha)
 =-1.
\end{align*}

Finally, the substitution \(z=uv\) gives, for \(q\ne0\),
\beqnn
 \int_0^\infty
 \bigl[\Phi_q(u+z)-\Phi_q(u)-z\Phi_q'(u)\bigr]\mu_\alpha(\dd z) 
  =u^{q-\alpha}\frac1q
  \int_0^\infty
  \bigl[(1+v)^q-1-qv\bigr]\mu_\alpha(\dd v),
\eeqnn
and, for \(q=0\),
\beqnn
  \int_0^\infty
 \bigl[\log(u+z)-\log u-z/u\bigr]\mu_\alpha(\dd z) =u^{-\alpha}
  \int_0^\infty
  \bigl[\log(1+v)-v\bigr]\mu_\alpha(\dd v).
\eeqnn
The result follows.
\end{proof}

\end{document}